\documentclass[12pt,a4paper]{amsart}

\usepackage[margin=2cm]{geometry}

\usepackage{amsmath,amssymb,amsthm,enumitem}

\usepackage[normalem]{ulem}

\usepackage{xcolor}
\usepackage{tikz}

\newtheorem{Auxiliary}{Auxiliary}[section]
\newtheorem{Thm}[Auxiliary]{Theorem}
\newtheorem{Cor}[Auxiliary]{Corollary}

\newtheorem{Lem}[Auxiliary]{Lemma}
\theoremstyle{definition}
\newtheorem{Rem}[Auxiliary]{Remark}
\newtheorem{Def}[Auxiliary]{Definition}
\newtheorem{example}[Auxiliary]{Example}
\newtheorem{Not}[Auxiliary]{Notation}

\newcommand{\Z}{\mathbb{Z}}
\newcommand{\Q}{\mathbb{Q}}
\newcommand{\R}{\mathbb{R}}
\DeclareMathOperator{\Supp}{Supp}

\newcommand{\graphscale}{1cm}

\newcommand\restr[2]{{
		\left.\kern-\nulldelimiterspace 
		#1 
		\right|_{#2} 
}}
\newcommand\restrd[2]{{
		\left.\kern-\nulldelimiterspace 
		#1 
		\vphantom{\big|} 
		\right|_{#2} 
}}

\usepackage{tikz}
\usepackage{pgfplots}
\pgfplotsset{compat=1.15}
\usepackage{mathrsfs}
\usetikzlibrary{arrows}
\usetikzlibrary{positioning}

\title[Splicing and Poincar\'e series]
{Splice formulae for Poincar\'e series of integral homology spheres}

\author[T. L\'aszl\'o]{Tam\'as L\'aszl\'o}
\address{Babe\c{s}-Bolyai University, Faculty of Mathematics and Computer Science,\newline \hspace*{6mm} 
Str. Mihail Kog\u{a}lniceanu nr. 1, 400084 Cluj-Napoca, Romania}
\email{tamas.laszlo@ubbcluj.ro}

\author[A. N\'emethi]{Andr\'as N\'emethi}
\address{Alfr\'ed R\'enyi Inst. of Math.,  
Re\'altanoda utca 13-15, H-1053, Budapest, Hungary \newline \hspace*{3mm}
Babe\c{s}-Bolyai Univ., 
str. M. Kog\u{a}lniceanu 1, 400084 Cluj-Napoca, Romania \newline \hspace*{3mm}
BCAM, 
Mazarredo, 14 E48009 Bilbao, Basque Country, Spain}
\email{nemethi.andras@renyi.hu}

\author[Gy. T\H ot\H os]{Gy\"orgy T\H ot\H os}
\address{Babe\c{s}-Bolyai University, Faculty of Mathematics and Computer Science,\newline \hspace*{6mm} 
Str. Mihail Kog\u{a}lniceanu nr. 1, 400084 Cluj-Napoca, Romania}
\email{gyorgy.totos@ubbcluj.ro}

\subjclass[2020]{Primary. 14B05, 14G10, 32S50; Secondary.  32S25, 57K31, 14J80.}
\keywords{links of normal surface singularities, integral homology spheres, 
3-manifolds, plumbing graphs, splice diagrams, Poincar\'e series, polynomial parts of series, periodic constant, Casson's invariant}
\thanks{All the authors are partially supported by NKFIH Grant `\'Elvonal (Frontier)' KKP 144148.  T.L. is supported by the `J\'anos Bolyai Research Scholarship' of the Hungarian Academy of Sciences. Gy.T. and T.L. acknowledge  the support of the project `Singularities and Applications' - CF 132/31.07.2023 funded by the European Union - NextGenerationEU - through Romania's National Recovery and Resilience Plan.}

\begin{document}

\begin{abstract}
    Let $M$ be a plumbed integral homology sphere 3-manifold associated with a connected negative definite 
    plumbing graph $\Gamma$. One of its most important invariants is its multivariable Poincar\'e series
    (or zeta function) 
    $f_\Gamma(\mathbf {t})$. Several numerical invariants can be read from $f_\Gamma(\mathbf{t})$, or even from its 
    `polynomial part' $ \operatorname{Pol}_{\Gamma}(\mathbf{t})$. 
    For example, the `normalized' Casson's invariant equals $ \operatorname{Pol}_{\Gamma}(1)$. 

    In this note we provide several splice (surgery) formulae for $f_\Gamma$ and $\operatorname{Pol}_\Gamma$
     (reduced  to the node variables, or to the node variables of connected sub-graphs). In this way, these global invariants can be recovered 
      from a collection of certain smaller graphs. 

      Recall that some (integral homology sphere) 3-manifold  invariants are additive with 
      respect to the splice decomposition (like the Casson's invariant). However, some invariants 
      need some `splice correction terms'. In our  formulae the correction terms are easily computable one variable Alexander polynomials. 
\end{abstract}

\maketitle

\section{Introduction}

Let $M$ be an integral homology sphere 3-manifold associated with a connected negative definite plumbing graph $\Gamma$. The manifold $M$ might appear naturally also as an object of the low dimensional topology, but also as the link of a 
complex analytic normal surface singularity. In this second
case any dual resolution graph can serve as the plumbing graph of $M$. 

Several invariants of $M$ can be read from its lattice $L$. It is freely generated by the set of vertices $\mathcal{V}$ of $\Gamma$, that is, $L=\Z\langle E_v\rangle _{v\in \mathcal{V}}$.  It carries a negative definite intersection form $(\cdot\,,\cdot)$ read from $\Gamma$. 
In the case of a singularity, the lattice is given by the second integral homology of the total space of a resolution with its natural intersection form. This article focuses more on the topological (and combinatorial) properties of $M$ and $\Gamma$,  hence we will say less about singularity theoretical analytic properties of singular germs.

Since $M$ is an integral homology sphere, it is a plumbing of $S^1$-bundles over 2--spheres, 
the graph $\Gamma$ is a tree, and the intersection form is unimodular. In this case, $M$ can be coded by another graph as well, by the `splice diagram' $\mathfrak{S}$ \cite{eisenbud2016three}.
The nodes of $\Gamma$ and $\mathfrak{S}$ can be identified, while the strings (bamboos) in $\Gamma$ are 
replaced by edges in $\mathfrak{S}$. 
In fact, by the very conceptual and concise form of
the decorations of $\mathfrak{S}$, this diagram is  more convenient
in the discussion (and proofs) of splice decompositions and surgery formulae 
associated with integral homology sphere graph 3-manifolds. 
In our discussions and statements we will use  this language as well.

From the lattice $L$ one can read several key invariants. One of them, is the multivariable 
(topological) Poincar\'e series $f(\mathbf {t})$, or called also the `zeta function' associated with $L$.
Its definition will be given next. First, the fixed basis $\{E_v\}_v$ determines a dual basis 
$\{E^*_v\}_v$ in $L$,  defined as $( E_v^*, E_u)=-\delta_{uv}$ for every $v$ and $u$ (and $\delta_{uv} $ is the Kronecker delta). The negative sign in front of $\delta_{uv} $
is motivated by the following  fact -- as a consequence of the negative definiteness of 
the form --,  if we define $E_v^*$ via this sign convention then  all its 
$E_v$--coefficients  are strictly positive.

The lattice $L$ carries two partial orderings. If $l=\sum_{v}l_vE_v$ and $l'=\sum_{v}l'_vE_v$ 
then we say that $l\geq l'$ if and only if $l_v\geq l'_v$ for every $v$. Moreover, 
 we say that  $l\succ l'$ if 
    $l_v>l_v'$ for every $v$. 

Then, by definition, the Poincar\'e series $Z(\mathbf {t})=Z(t_1,\ldots, t_{|\mathcal{V}|})\in 
\Z[[t_1, \ldots, t_{|\mathcal{V}|}]]$ is  the
Taylor expansion at $\mathbf{t}=0$  of the zeta function 
 \begin{equation}\label{eq:zeta:defi}
            f(\mathbf{t}):= \prod_{v\in\mathcal{V}}(1-\mathbf{t}^{E_v^*})^{\delta_v-2},
        \end{equation}
        where for any  $l = \sum_v l_vE_v$ 
        we write $\mathbf{t}^{l} =\prod_v t_v^{l_v}$, and $\delta_v$ is the valency of the vertex $v$ in $\Gamma$.   
We usually identify $f$ with its expansion $Z$.  

The series contains very deep information about $M$, for several properties and applications see e.g.  \cite{CDGZ04,CDGZ08universal,laszlo2022canonical,LNN19,LNN20,LN14Erhart,LSz17,N04invariants, N07poincare,N08seiberg,NO09}. For its connection with the sign refined Turaev torsion of $M$ 
(and, hence with the Seiberg-Witten invariant of $M$)
in the rational homology sphere case see \cite{BN10,N02seiberg,N04seiberg,N06seiberg,N08seiberg,NO09}. 
For different surgery formulae which help its computations and the extraction of different information from it, see \cite{BN10,laszlo2022canonical,LNN19,LNN20}. For its connection with
Ehrhart theory see e.g. \cite{LN14Erhart,LSz17}.
For its asymptotic behaviour (e.g. certain summation of its coefficients), or its motivic generalizations  see different parts in \cite{nemethi2022normal}.

An important feature of  $f(\mathbf {t})$
 is that it has a canonical decomposition into its 
`polynomial part' and `negative degree part', and several invariants read from $f(\mathbf{t})$
are already determined from the polynomial part (though its asymptotic behaviour is coded by its 
negative degree part). 

Indeed, $f(\mathbf{t})$ has a decomposition  of type 
$\operatorname{Pol}(\mathbf{t})+f^-(\mathbf{t})$ with the following properties
(see \cite{LSz18,LNN19}):

\vspace{1mm}

(i) $\operatorname{Pol}(\mathbf{t})$  is a finite sum (`polynomial')  of type 
$\sum_{j}n_j \mathbf {t}^{c_j}$ with $c_j\not\prec 0$ for all $j$;

(ii) $f^-(\mathbf{t})$ is a rational function  with negative degree  in all variables $t_v$.

\noindent Furthermore, such a decomposition is unique.

\vspace{1mm}

The phenomenon can be exemplified  by the following elementary decomposition
$$\frac{t_1^3t_2}{1-t_1^2t_2^3}=-t_1t_2^{-2}+\frac{t_1t_2^{-2}}{1-t_1^2t_2^3}.$$
For a (different) algorithm for the decomposition of $f(\mathbf {t}) $  see section 4. 
For another concrete geometrical example of such decomposition see e.g. the last paragraph of \ref{sss:6.1.1}. 

As an application
for ${\rm Pol}(\mathbf{t})$, in section 6 we will use the identity
(cf.  \cite{A14casson,Ls96global})
        \begin{equation}\label{eq:intr1}
            \operatorname{Pol}(1)=  -\lambda(M)-\frac{Z_K^2+|\mathcal{V}|}{8},
        \end{equation}
        where $\lambda(M)$ is the Casson's invariant of $M$ and $Z_K\in L$ is the 
        anticanonical cycle computed by the adjunction relations in the lattice $L$ (see   \ref{ss:2.1}). 
        In this way, ${\rm Pol}(\mathbf{t})$ appears as a multivariable `polynomial' generalization 
        of the numerical invariant $\lambda+(Z_K^2+|\mathcal{V}|)/8$. 

In fact, almost all the information coded in $f(\mathbf {t})$ and its polynomial part ${\rm Pol}(\mathbf {t})$ are already coded in their restriction to the variables of the nodes $\mathcal{N}=\{v\in\mathcal{V}\,:\, \delta_v\geq 3\}$, that is, in the series 
$f(\mathbf{t}_{\mathcal{N}}):= f(\mathbf{t})|_{t_v=1\ \tiny{\mbox{for any $v\not\in \mathcal{N}$}}}$,
and ${\rm Pol}(\mathbf{t}_{\mathcal{N}})
:= {\rm Pol}(\mathbf{t})|_{t_v=1\ \tiny{\mbox{for any $v\not\in \mathcal{N}$}}}$. 

For example, in (\ref{eq:intr1}) the substitution $\mathbf {t}_{\mathcal{N}}=1$
in  ${\rm Pol}(\mathbf{t}_{\mathcal{N}})$ works equally well. 

\vspace{2mm}

In this article we present several surgery/splice  formulae for 
$f(\mathbf {t}_{\mathcal{N}})$ and its polynomial part 
${\rm Pol}(\mathbf {t}_{\mathcal{N}})$. 
(When we replace  $\Gamma$ by $\mathfrak{S}$ we keep all its nodes, hence $\mathcal{N}$ is essentially
the set of non-ends of $\mathfrak{S}$.)
The goal is to represent 
these functions 
associated with $\Gamma$ (or $\mathfrak{S}$)  
in terms of the corresponding functions associated with smaller graphs 
(with less nodes), after convenient reduction of variables and substitutions. 
In these formulae we do not have an exact  additivity of the contributing terms 
(as e.g. in the splice formula of the Casson's invariant \cite{BN,FM}),
however the correction terms are easily computable one-variable Alexander type polynomials. In this way the identities are high  generalizations (to the 
multivariable series level) e.g. of the splice formula of the numerical
invariant $Z_K^2+|\mathcal{V}|
$  from \cite{NW}. 

 Note that  smaller graphs 
 produce series in smaller number of variables (and the information they code is more local than global),  hence 
 usually $f(\mathbf {t}_{\mathcal{N}})$ in $|\mathcal{N}|$ variables, cannot be recovered  from a/any 
 family of sub-graphs if they are too small. 
 Hence, one has to consider smaller (but not very small)  sub-graphs which still have $|\mathcal{N}|$ `basic', well-chosen vertices, which produce series in  $|\mathcal{N}|$ variables, and which (after certain change of variables) reconstruct $f(\mathbf{t}_{\mathcal{N}})$.

 Such possible sub-diagrams are the {\it peelings} of $\mathfrak{S}$, cf. section 3. In their case the subgraphs are obtained by deleting certain `end-nodes' of $\mathfrak{S}$, and the variables of the deleted nodes 
 are replaced by `gluing vertices'. Even for such peeled sub-diagrams, the fact that from their
`semi-local' information one can recover the global $f(\mathbf{t}_{\mathcal{N}})$ is rather surprising. 

The present surgery formulae are formally and conceptually different than those proved in \cite{BN10,LNN20,O08}
(or formulae based on several  exact triangles of low dimensional topology), where one compares invariants 
of the graphs $\Gamma$ and $\Gamma\setminus \{v\}$, where $v$ is a vertex of $\Gamma$
(or modification of the Euler numbers of $\Gamma$).  

\vspace {2mm}

The structure of the paper is the following. 

\vspace {2mm}

Section  1 contains some preliminaries about plumbing graphs, splice diagrams, the Poincar\'e series, 
and its connection with Alexander polynomials of links.
Section  3 introduces the notion of peeling of a splice diagram  and presents a surgery formula for $f(\mathbf {t}_{\mathcal{N}})$ in terms of series of all the 
peelings of $\mathfrak{S}$. Section  4 reduces this surgery formula 
to the polynomial part. In section  5 we present an even more general 
surgery formula when we recover by surgery $f(\mathbf{t})$  and ${\rm Pol}(\mathbf {t})$
reduced to the 
variables of the nodes of an arbitrary connected sub-diagram of $\mathfrak{S}$.
In section  6 we present some applications regarding the substitution 
${\rm Pol}(\mathbf{t}=1)$. 
(This numerical invariant ${\rm Pol}(1)$
is called in the literature the `periodic constant' of $f(\mathbf{t})$.)
This substitution will provide new surgery formulae for
the numerical invariant $Z_K^2+|\mathcal{V}|$ (using the previous surgery formulae for ${\rm Pol}$, the identity 
(\ref{eq:intr1}), and the splice formula for the Casson's invariant \cite{BN,FM}).

    \section{Preliminaries}
    \subsection{Resolution/plumbing graphs}   \label{ss:2.1}
        Let $(X,0)$ be a normal surface singularity with a {\it  rational homology sphere link} $M$ and $\pi\colon \widetilde{X}\to X$ a good resolution, that is, the exceptional divisor $E = \pi^{-1}(0)$ is a normal crossing divisor on the smooth surface $\widetilde{X}$. Note that
        $\partial \widetilde{X}\simeq M$. Assume that 
        the irreducible components of $E$ are $\{E_v\}_{v\in\mathcal{V}}$, and let $\Gamma$ be the dual resolution graph of $\pi$. 
        
        Note that $\Gamma$ is a connected plumbing graph with a negative definite intersection form $\mathcal{I} = ((E_u,E_v))_{u,v}$. We denote the self-intersection numbers by $-b_v:=(E_v,E_v)$ for all $v\in\mathcal{V}$ and we use the notation $\det\Gamma :=\det(-\mathcal{I})$ for the determinant of the graph $\Gamma$. In the sequel, when we refer to the vertices of $\Gamma$,  we might use both the indices from $\mathcal{V}$ and the corresponding irreducible components $E_v$. For the pair of vertices $u,v\in\mathcal{V}$ we will write $u-v$ if they are connected by an edge. Let us denote by $\delta_v$ the valency of the vertex $v$ in $\Gamma$. Denote by $\mathcal{N}$ the set of nodes (vertices with $\delta_v\geq 3$) and by $\mathcal{E}$ the set of ends (vertices with $\delta_v=1$). Furthermore, vertices with $\delta_v=2$ will be called bamboo vertices.
        \vspace{0.2cm}
        
        The assumption that the link $M$ is a rational homology sphere means that $\Gamma$ is a tree and 
        the genus $g(E_v)$ of any $E_v$ is zero.
         Moreover, $M$ is an \emph{integral homology sphere} if additionally $\det\Gamma=1$. 
        \vspace{0.2cm}
        
        We associate with $\Gamma$ a  lattice and its dual as follows. Define the lattice  $L(\Gamma) = H_2(\widetilde{X},\Z)\cong \Z\langle E_v\rangle_{v\in\mathcal{V}}$. It is  endowed with the negative definite intersection form $(\cdot\,,\cdot)$. Its elements will be called \emph{(integral) cycles}. The dual lattice $L'(\Gamma) := \operatorname{Hom}(L,\Z)\simeq H_2(\widetilde{X},M,\mathbb{Z})\simeq H^2(\widetilde{X},\mathbb{Z})$ embeds into $L(\Gamma)\otimes \Q$ and can be identified with the rational cycles $l'\in L(\Gamma)\otimes \Q$ for which $(l',l)_{\mathbb{Q}}\in \Z$ for any $l\in L(\Gamma)$. Here  $(\cdot,\cdot)_{\mathbb{Q}}$ is the extension of $(\cdot,\cdot)$ to $L(\Gamma)\otimes \Q$,
        in the sequel still denoted by $(\cdot,\cdot)$. Let $\{E_v^*\}_{v\in\mathcal{V}}$ be the (anti)dual basis which generates $L'(\Gamma)$, chosen in such a way that $(E_v^*,E_u) = -\delta_{u,v}$ (Kronecker delta) for every $u,v\in \mathcal{V}$. In fact, expressed in the $\{E_v\}_v$ basis,  the cycles $\{E_v^*\}_v$ are the columns of $(-\mathcal{I})^{-1}$.  For any cycle $l'\in L'(\Gamma)$ and $v\in\mathcal{V}$ denote by $m_v(l') = (-E_v^*,l')$ and $a_v(l') = (-E_v,l')$ the corresponding coefficients. That is
        \begin{equation}\label{eq:coords}
            l' = \sum_vm_v(l')E_v = \sum_v a_v(l')E_v^*.
        \end{equation}
        The quotient group $L'(\Gamma)/L(\Gamma)$ is the finite group $H_1(M,\Z)$ of order $\det\Gamma$. 
        \vspace{0.2cm}
        
         We define the anti-canonical cycle $Z_K\in L'(\Gamma)$  as the unique cycle of $L'(\Gamma)$ such that $-Z_K$ is numerically equivalent with the canonical divisor of the smooth surface $\widetilde{X}$. Thus, by the adjunction relations, it is determined topologically by the linear system of equations 
         $\{(Z_K,E_v)=-b_v+2\}_v$. This system reads also as  $Z_K = E+\sum_v(\delta_v-2)E_v^*$.
        \vspace{0.2cm}

             If we consider several plumbing graphs, in our notations we might indicate which graph the corresponding objects are associated with. Eg., if needed, we will denote by $E_{v,\Gamma}$ the irreducible exceptional divisors corresponding to $\Gamma$.

        More details regarding this part can be found in most of our references, see eg.
        \cite{N04invariants, nemethi2022normal}.
        
    \subsection{Splice diagrams} Assume that $M$ is an integral homology sphere, that is  $\det \Gamma = 1$, or, equivalently  $L'(\Gamma) = L(\Gamma)$. Then 
    with the  resolution (or negative definite plumbing) tree one can associate another weighted tree, called the `splice diagram'. The splice diagrams were introduced by Siebenmann, and extensively studied by Eisenbud and Neumann \cite{eisenbud2016three}. If the singularity link is an integral homology sphere, the  resolution graph can be codified completely in a more compact form by this diagram. 
    Since the technical details of this article are largely based on the arithmetic of the splice diagram, we will set some notations and recall in more detail the transition between the resolution graphs and the splice diagrams.
        \subsubsection{\bf From plumbing tree to splice diagram \cite{eisenbud2016three} } \label{ss:splice}
        Let $\Gamma$ be a connected negative definite plumbing tree with $\det \Gamma=1$. We say that two vertices $u,v\in \mathcal{V}$ are connected by a bamboo, if the shortest path from $u$ to $v$ contains only bamboo vertices, or $u-v$ (ie. they are connected by an edge). In the sequel, this property  will be denoted by $u-_bv$. Furthermore, for any two vertices $u,v\in\mathcal{V}$, $u\not=v$,  we will write $d_{v,u} := \det\Gamma_{v,u}$ for the determinant of the subgraph $\Gamma_{v,u}$, which is the connected component of $\Gamma\setminus\{v\}$ containing $u$.

        Roughly speaking, a splice diagram is a `finite tree' shaped diagram with vertices only of valency $\geq 3$ (call them `nodes')  and valency one (call them `ends'), and also 
        with a collection of integer weights at each node, associated with the edges departing from that node. 
        
        Associated with a plumbing/resolution graph $\Gamma$,  the splice diagram $\mathfrak{S}$ can be constructed in the following way. The nodes of the splice diagram are exactly the nodes of $\Gamma$. Two nodes $u,v$ are connected by an edge in the splice diagram if $u-_b v$ in $\Gamma$, and for simplicity, we will refer to this edge in $\mathfrak{S}$ by $u\sim v$. Additionally, for every end-vertex $e\in\mathcal{E}$ of $\Gamma$ we take an end-vertex for $\mathfrak{S}$: if 
       $n\in \mathcal{N}$ is the  unique node in $\Gamma$ such that $n-_b e$, we associate an edge in the splice diagram which connects the corresponding end-vertex and node in $\mathfrak{S}$.
       Similarly as above, we will denote this fact  by $n\sim e$;  these edges are also called the \emph{legs} of the splice diagram. An edge of type $u\sim v$ ($u,v\in \mathcal{N}$) is decorated at $v$ with $d_{v,u}$, while an edge $n\sim e$ is decorated only at $n$ with the corresponding determinant $d_{n,e}$.

        By \cite{eisenbud2016three}, the integral homology spheres that are singularity links (that is, the intersection matrix of their plumbing tree  is negative definite and unimodular) 
        are in one-to-one correspondence with splice diagrams satisfying the following properties:
        \begin{itemize}
        \item[(a)] the weights around a node are positive and pairwise relatively prime; 
        \item[(b)] the weight on an edge $n\sim e$ is $>1$;
        \item[(c)] for any edge $u\sim v$ ($u,v\in \mathcal{N}$) the \emph{edge determinant}, which is defined by the product of the two weights on the edge minus the product of the weights adjacent to this edge, is positive. That is, 
        \[
             d_{u,v}d_{v,u} - \prod_{w\sim u}^{w\neq v}d_{u,w}\prod_{w \sim v}^{w\neq u}d_{v,w}>0.
        \]
        \end{itemize}
        Such splice diagrams are called `minimal'. A non-minimal splice diagram satisfies (a) and (c), but not necessarily (b). If a diagram has legs decorated by 1 then it can be transformed into a minimal diagram by removing these  legs (and the diagrams will represent the same 3-manifold).
        
        Now, for any two vertices $u,v\in\mathcal{N}\cup\mathcal{E}$ in $\mathfrak{S}$ we define $\mathfrak{S}[u,v]\subseteq\mathcal{N}\cup\mathcal{E}$ to be the  set of vertices in $\mathfrak{S}$
        on the shortest  path from $u$ to $v$ in $\mathfrak{S}$, including $u$ and $v$ as well. We define $w(\mathfrak{S}[u,v])$ as the product of the weights of the splice diagram {\it 
        along but not on }the path from $u$ to $v$. We also define $w(\mathfrak{S}[u,v))$ to be the same product of weights, with the exception of those surrounding $v$. In other words:
        \begin{align*}
            w(\mathfrak{S}[u,v]) &:= \prod\Big\{d_{\nu,\nu'}\big| \nu\sim\nu',\nu\in\mathfrak{S}[u,v],\nu'\notin\mathfrak{S}[u,v]\Big\},\\
            w(\mathfrak{S}[u,v)) &:= \prod\Big\{d_{\nu,\nu'}\big| \nu\sim\nu',\nu\in\mathfrak{S}[u,v]\setminus\{v\},\nu'\notin\mathfrak{S}[u,v]\Big\}.
        \end{align*}
        Similarly, we define $w(\mathfrak{S}(u,v])  := w(\mathfrak{S}[v,u))$.
        
        This setup has the following advantage regarding the determination of the multiplicities of the cycles. The coefficient $m_{u}(E_v^*)$ for any $u,v\in\mathcal{N}\cup\mathcal{E}$ ---
        and for $v\neq u$ if $u,v\in\mathcal{E}$ --- equals
        to $w(\mathfrak{S}[u,v])$ (see Lemma 20.2 in \cite{eisenbud2016three}), which can be read off easily from the splice diagram.
        
        In parallel,  for any $u,v\in\mathcal{V}$ in $\Gamma$, we define the sub-graph $\Gamma[u,v]$ of $\Gamma$ to be the shortest  path (full subgraph) from $u$ to $v$. Then for any $u,v\in\mathcal{V}$ we have $m_u(E_v^*) = \det(\Gamma\setminus\Gamma[u,v])$ (see \cite[section 2.2.5]{nemethi2022normal}). We also define $\Gamma[u,v) := \Gamma[u,v]\setminus\{v\}$, and $\Gamma(u,v) := \Gamma[u,v]\setminus\{v,u\}$.

        \subsubsection{\bf From splice diagram to plumbing tree}\label{ss:splicetoplumb}
        For the construction of the plumbing tree from a splice diagram we refer to \cite[sect. 22]{eisenbud2016three}), or see eg. \cite[sect. 5.2]{nemethi2022normal}). 
        
        For each node $v$ of the (not necessarily minimal) splice diagram one considers  the  star-shaped sub-diagram (with unique node $v$ and all its adjacent edges and their decorations). It  represents the integral homology sphere $\Sigma_v:=\Sigma(\left\{d_{v,u}\mid u\sim v\right\})$.
        In the next paragraph we associate a star-shaped plumbing graph to it. 
        
        Choose one  $u\sim v$ and take the negative (or Hirzebruch) continued fraction of
        \[
            \frac{1}{d_{v,u}}w(\mathfrak{S}[v,u))=\frac{1}{d_{v,u}}\cdot \prod_{w\sim v}^{w\neq u} d_{v,w} = b_0-\cfrac{1}{b_1 - \cfrac{1}{\ddots-\cfrac{1}{b_s}}},
        \]
        where $b_0\ge1$ and $b_1,\ldots,b_s\ge2$. Now, we attach a bamboo with decorations $-b_s,\ldots,-b_1$ to the vertex $E_v$ (see below); and, if $u$ is a node in the splice diagram, an additional star $\star$ vertex with decoration $-b_0$ is also attached to the end. 
        \begin{center}
            \begin{tikzpicture}[roundnode/.style={circle, draw=black, fill=black, very thick, inner sep = 1pt},line width=1]
                \node[roundnode] (v) {};                        \path (v)++(90:0.5) node {$E_v$};
                \path (v) ++ (1,0) node[roundnode] (s) {};      \path (s)++(90:0.5) node {$-b_s$};
                \path (v) ++ (2,0) node (left) {};
                \path (v) ++ (3,0) node (right) {};
                \path (v) ++ (4,0) node[roundnode] (1) {};      \path (1)++(90:0.5) node {$-b_1$};
                \path (v) ++ (5,0) node (0) {};                 \path (0) node {$\star$}; \path (0)++(90:0.5) node {$-b_0$};
                \draw (v)--(left) (right)--(0);
                \draw[dashed] (left)--(right);
                
            \end{tikzpicture}
        \end{center}
        For $d_{v,u}=1$,  we attach a vertex with decoration $-1$ to $E_v$ if $v\sim u$ is a leg; otherwise a star vertex  with decoration $\big(-w(\mathfrak{S}[v,u))\big)$.
        Repeat this for all $u\sim v$ and decorate $E_v$ with the unique $-b_v$ in such a way that the corresponding determinant of the constructed star-shaped graph (without the star vertices) equals  $1$.
(This decorated graph, without the star vertices, is the plumbing graph of $\Sigma_v$. The star vertices serve as `gluings' for the plumbing graph of $\Gamma$.)

        Next, the plumbing graph $\Gamma$, which corresponds to the whole splice diagram, is obtained from the plumbing graphs (including the star vertices)
        associated with the pieces $(\Sigma_v)_{v\in\mathcal{N}}$ as follows. For every $u\sim v$, $u,v\in \mathcal{N}$  we connect the corresponding two $\star$ vertices 
        (one of them is an end-vertex in the plumbing graph associated with $\Sigma_v$, the other one in 
        $\Sigma_u$) with an edge. The plumbing graph obtained in this way is usually not negative definite, and the bamboos are not minimal either. After a number of blow downs of $(-1)$ vertices
        and a single $0$-absorption on each bamboo between nodes, the final plumbing tree is obtained. 
        (For the plumbing calculus see \cite{NeuCalc}.)
        Note that the nodes of the splice diagram survive after the plumbing calculus as nodes in the plumbing graph.
        
        Since it will be used later, for any two nodes $u,v\in \mathcal{N}$ in $\Gamma$
        with $u-_b v$ we denote by $\theta_{u,v}$ the vertex of the plumbing graph obtained from the $0$-absorption, as a vertex of $\Gamma$. Note that, in general $\theta_{u,v}$ could be any of the vertices of the bamboo from $u$ to $v$, including the nodes.
        \begin{Rem}
            When referring to the plumbing graph associated to some splice diagram $\mathfrak{S}$, we mean the negative definite tree obtained from the algorithm described above, and we denote it by $\Gamma(\mathfrak{S})$. To be more precise, other than the necessary blow-downs and the $0$-absorptions we do not perform any more blow-downs.
        \end{Rem}
        \begin{example}
            Let us consider the splice diagram on left hand side of Figure \ref{fig:splice_0_absorption_on_node}. In the same figure,  in the middle, there 
            are the plumbing graphs of the integral homology spheres $\Sigma(2,3,11)$ and $\Sigma(4,5,11)$ respectively (undecorated vertices have weight $-2$), with the $\star$ vertices connected by an edge. After 7 blow-downs of bamboo vertices non-adjacent to the nodes, the vertex with weight 0 is neighbouring the left node. As such, it will be absorbed into it, and the final self-intersection number of this node becomes $-3$.  In the final plumbing tree there are no vertices between the two nodes, they are connected by an edge, cf. the right hand  
            side of figure \ref{fig:splice_0_absorption_on_node}.
            \begin{figure}[!h]
                \centering
                \begin{tikzpicture}[roundnode/.style={circle, draw=black, fill=black, very thick, inner sep = 1},line width=1,x=\graphscale,y=\graphscale]
                	\node[roundnode] (1) {};
                	\path (1)++(2,0) node[roundnode] (3) {};
                	\path (1)++(120:1.5) node[roundnode] (10) {};
                	\path (1)++(-120:1.5) node[roundnode] (11) {};
                	\path (3)++(60:1.5) node[roundnode] (30) {};
                	\path (3)++(-60:1.5) node[roundnode] (31) {};
                	
                	\draw (10)--(1)--(11) (1)--(3) (30)--(3)--(31);
                	\begin{small}
                		\renewcommand{\r}{0.5}	
                		\path (1)++(100:\r) node {$3$};	
                		\path (1)++(-100:\r) node {$2$};
                        \path (1)++(20:\r) node {$11$};
                		\path (3)++(80:\r) node {$4$};
                		\path (3)++(-80:\r) node {$5$};
                		\path (3)++(160:\r) node {$11$};	
                	\end{small}
                \end{tikzpicture}
                \hspace{0.25cm}
                \begin{tikzpicture}[roundnode/.style={circle, draw=black, fill=black, very thick, inner sep = 1},line width=1,x=\graphscale,y=\graphscale]
                	\node[roundnode] (0) {};
                    \path (0) ++ (120:0.5) node[roundnode] {};
                    \path (0) ++ (120:1) node[roundnode] {};
                    \path (0) ++ (-120:0.5) node[roundnode] {};
                    \foreach \i in {1,2,3,4} {
                        \path (0)++({\i*0.5},0) node[roundnode] {};
                    }
                    \path (0)++(2.5,0) node[roundnode] (01) {};
                    \draw (0)-++(120:1) (0)-++(-120:0.5) (0)-++(3,0);

                    \path (0)++(3.25,0) node (1) {$\star$};
                    \path (0)++(5,0) node (2) {$\star$};

                    \path (2)++(0.75,0) node[roundnode] (3){};
                    \path (3)++(0.5,0) node[roundnode] {};
                    \path (3)++(1,0) node[roundnode] (4) {};
                    \path (4)++(60:1) node[roundnode] (5) {};
                    \path (4)++(-60:1) node[roundnode] (6) {};

                    \draw (1) --(2);

                    \draw (2)--(4) (5)--(4)--(6);
                    
                	\begin{small}
                		\renewcommand{\r}{0.5}		
                		\path (01)++(90:\r) node {$-3$};
                        \path (0)++(3.25,\r) node {$-1$};
                        \path (2)++(0,\r) node {$-2$};
                        \path (3)++(0,\r) node {$-6$};
                        \path (4) ++(0:\r) node {$-1$};
                        \path (5)++(0:\r) node {$-4$};
                        \path (6)++(0:\r) node {$-5$};
                	\end{small}  
                \end{tikzpicture}
                \hspace{0.25cm}
                \begin{tikzpicture}[roundnode/.style={circle, draw=black, fill=black, very thick, inner sep = 1},line width=1,x=\graphscale,y=\graphscale]
                	\node[roundnode] (0) {};
                    \path (0) ++ (120:0.5) node[roundnode] {};
                    \path (0) ++ (120:1) node[roundnode] {};
                    \path (0) ++ (-120:0.5) node[roundnode] {};
                    
                    \draw (0)-++(120:1) (0)-++(-120:0.5);

                    \path (0)++(1,0) node[roundnode] (4){};;
                    \path (4)++(60:1) node[roundnode] (5) {};
                    \path (4)++(-60:1) node[roundnode] (6) {};

                    \draw (0)--(4) (5)--(4)--(6);
                    
                	\begin{small}
                		\renewcommand{\r}{0.5}		
                        \path (0)++(180:\r) node {$-3$};
                        \path (4) ++(0:\r) node {$-1$};
                        \path (5)++(0:\r) node {$-4$};
                        \path (6)++(0:\r) node {$-5$};
                	    \path (4)++(-115:\r) node {$E_2$};
                        \path (0)++(-65:\r) node {$E_1$};
                    \end{small}

                \end{tikzpicture}
                \caption{}
                \label{fig:splice_0_absorption_on_node}
            \end{figure}
        \end{example}

    \subsection{The topological Poincar\'e series \cite{CDGZ04,CDGZ08universal,N07poincare}}\label{ss:2.3}
        Let $\Gamma$ be a connected negative definite plumbing graph as above with $\det\Gamma=1$.
        The \emph{(topological) Poincar\'e  series} $Z(\mathbf{t}):=\sum_{l\in L(\Gamma)}z(l)\mathbf{t}^{l}$ associated with $\Gamma$ is the Taylor expansion at $\mathbf{t}=0$ of the  multi-variable rational function
        \begin{equation}\label{eq:zeta:def}
            f(\mathbf{t}):= \prod_{v\in\mathcal{V}}(1-\mathbf{t}^{E_v^*})^{\delta_v-2},
        \end{equation}
        where for any  $l = \sum_v l_vE_v$ 
        we write $\mathbf{t}^{l} =\prod_v t_v^{l_v}$.  
        The non-zero coefficients correspond to exponents of the form
        $
        l = \sum_{e\in\mathcal{E}}a_eE_e^*+ \sum_{n\in\mathcal{N}}a_nE_n^*,
        $
        with integers $a_e\ge0$ and $0\le a_n\le \delta_n-2$. Moreover, for any such $l$, 
        $
        z(l) = \prod_{n\in\mathcal{N}}(-1)^{a_n}\binom{\delta_n-2}{a_n}
        $.    
        Note that the series is supported on the so-called \emph{Lipman cone} $\mathcal{S}_\Gamma:=\Z_{\ge0}\langle E_v^* \rangle_{v\in \mathcal{V}}$. 
    \subsubsection{\bf Reduced series}\label{sss:redseries}
        Let $I\subseteq \mathcal{V}$ be a non-empty subset and consider the natural projection map $\Z\langle E_v\mid v\in\mathcal{V}\rangle\to\Z\langle E_v\mid v\in I\rangle$, $l=\sum_vm_vE_v\mapsto \restr{l}{I}=\sum_{v\in I}m_vE_v$. The fibers of this map intersected with the Lipman cone have finitely many elements, thus the substitution $t_v\mapsto1$ for $v\in\mathcal{V}\setminus I$ is well defined.
        The output $Z(\mathbf{t})_{t_v\mapsto1,v\notin I}$
        is called the reduced series, and it is denoted by $Z(\mathbf{t}_I)$. Note that we can  reduce the  function $f$ as well by taking $f(\mathbf{t}_I)= f(\mathbf{t})_{t_v\mapsto1,v\notin I}$, and the Taylor expansion of $f(\mathbf{t}_I)$ will be $Z(\mathbf{t}_I)$. For more see  eg. 
        \cite{N07poincare, LNN19,LNN20}.

  \subsection{\bf The multivariable Alexander polynomial associated with graph links}
  The topological Poincar\'e series (and several other series and polynomials considered in this note) can be naturally linked with the multivariable Alexander polynomials of graph links. 
        For the definition of the Alexander polynomial see eg. 
        \cite[sect. 5]{eisenbud2016three}.

        Let $\Gamma$ be a negative definite plumbing tree as before whose associated plumbed 3--manifold $M$ is an integral homology sphere. Consider the splice diagram associated with $\Gamma$ and we put some arrows on the legs, indexed by $I\subset\mathcal{E}$. 
        This represents a graph link $\mathcal{L}_I \subset M$. 
        An arrowhead supported on $v\in I$ represents a generic $S^1$-fiber above $E_v$ in the plumbing construction.
        Then, the Alexander polynomial $\Delta_{\mathcal{L}_I}(\mathbf{t}_I)\in\Z[t_i\mid i\in I]$ of this graph link $\mathcal{L}_I$ is given by (see \cite[Theorem 12.1]{eisenbud2016three}) 
        \begin{align}\label{eq:alexander}
        \Delta_{\mathcal{L}_I}(\mathbf{t}_I) = \begin{cases}
            \prod\limits_{v\in K_I}(1-\mathbf{t}_I^{E_v^*})^{\delta_v-2}& \mbox{if} \ |I|>1,\\
            (1-\mathbf{t}_I)\prod\limits_{v\in K_I}(1-\mathbf{t}_I^{E_v^*})^{\delta_v-2} & \mbox{if} \ |I|=1,
        \end{cases}            
        \end{align}
        where $K_I:= (\mathcal{N}\cup\mathcal{E})\setminus I$ and $\mathbf{t}_I^l = \prod_{i\in I}t_i^{m_i(l)}$ for any $l\in L(\Gamma)$. 

        Note that in the above definition we mean the `normalized Alexander polynomial', that is, it satisfies $\Delta_{\mathcal{L}_I}(0)=1$. (In \cite{eisenbud2016three} the signs in the definition are reversed, that is, instead of the factors $1-\mathbf{t}_I^l$ the terms $\mathbf{t}_I^l-1$ are considered.)     
    
\subsection{Further notations}\label{ss:NOT}

     Fix a splice diagram $\mathfrak{S}$ as in  \ref{ss:splice}.

  For any node $n\in\mathcal{N}$ we denote by $\nu_n$ the node valency, that is, the number of nodes connected to $n$ by an edge in $\mathfrak{S}$. Denote by
            $\mathcal{N}_{\mathrm{e}}$ the set of nodes with $\nu_n = 1$, and set $\mathcal{N}_{\mathrm{n}}:=\mathcal{N}\setminus \mathcal{N}_{\mathrm{e}}$. The vertices from  $\mathcal{N}_{\mathrm{e}}$ are called \emph{end-nodes}. For an end-node $n$, denote by $v_n$ the node which is connected  to it by an edge of $\mathfrak{S}$.
            
 For every node $n$ we set $\omega_n = \prod_{e} d_{n,e}$, where $e$ runs over all end-vertices which are connected to $n$ by an edge. Moreover, for every $e\in \mathcal{E}$ we introduce $\omega_{e} :=\omega_n/d_{n,e}$, where $n$ is the unique node connected to $e$ by an edge.
       
         \subsection{\bf The polynomial  functions $\Delta_n(t)$ and $P_n(t)$ }  \label{ss:normalizedAlexander}   
         Fix $\mathfrak{S}$ as above. 
        For every node $n\in\mathcal{N}$ we consider the star-shaped sub splice diagram 
        with central node $n$ and with all edges (connecting nodes) and legs adjacent to it.  On all new legs that were edges in $\mathfrak{S}$ (connecting to other nodes of $\mathfrak{S}$)  we place an arrow, and we denote their set by $V_n$ (see Figure \ref{fig:splice1}). The graph link 
        (sitting in $\Sigma_n$) 
        corresponding to this splice diagram will be denoted by  $\mathcal{L}_n$. Then, by (\ref{eq:alexander}), its Alexander polynomial  $\Delta_{\mathcal{L}_n}(\mathbf{t}_{V_n}) $
        can be expressed as $\Delta_n(T_n)$, where $\Delta_n$ is a polynomial in one-variable. Indeed, if we set  
        \begin{equation}\label{eq:AlexanderSingleNode}
            \Delta_{n}(t) := \frac{(1-t^{\omega_n})^{\delta_n-2}}{\prod_{n\sim e}^{e\in\mathcal{E}}(1-t^{\omega_{e}})}\cdot\begin{cases}
                1& \mbox{if} \ \ n\in\mathcal{N}_{\mathrm{n}},\\
                (1-t) &\mbox{if} \ \ n\in\mathcal{N}_{\mathrm{e}},
            \end{cases}
        \end{equation}
        and $T_n := \prod_{v\in V_n}t_v^{\alpha_{nv}}$ where  $\alpha_{nv} = \prod\{d_{n,u}\mid u\in V_n,u\neq v\}$, 
        then 
         $\Delta_{\mathcal{L}_n}(\mathbf{t}_{V_n}) =\Delta_n(T_n)$.

\vspace{1mm}

       Next, it is convenient to modify slightly the polynomials $\Delta_n(t)$
       as follows.

        For the nodes $n\in\mathcal{N}_{\mathrm{n}}$, we set 
        \begin{equation}\label{eq:Pn_NodeNode}
         P_n(t):=   \Delta_n(t) =\frac{(1-t^{\omega_n})^{\delta_n-2}}{\prod_{n\sim e}^{e\in\mathcal{E}}(1-t^{\omega_{e}})}\in\Z[t],
        \end{equation} 
        while for the nodes $n\in\mathcal{N}_{\mathrm{e}}$ one defines 
        \begin{equation}\label{eq:Pn_EndNode}
            P_n(t) = \frac{1-\Delta_n(t)}{1-t}=\frac{1}{1-t}-\frac{(1-t^{\omega_n})^{\delta_n-2}}{\prod_{n\sim e}^{e\in\mathcal{E}}(1-t^{\omega_{e}})}. 
        \end{equation}
        Note that if $n\in \mathcal{N}_{\mathrm{n}}$ then  $P_n(t)=   \Delta_n(t)$ is automatically a polynomial. 
        If $n\in \mathcal{N}_{\mathrm{e}}$ then $\mathcal{L}_n$ has a single component, hence  $\Delta_n(1)=1$.  In particular, $P_n(t)$ is  a polynomial
        in this case as well.  In such cases, when there is a single component, the rational function
        $\Delta_n(t)/(1-t)$ has a pole of order 1 at $t=1$, and its decomposition into
         $1/(1-t)-P_n(t)$ splits (naturally and in a unique form) into a rational function with negative degree $1/(1-t)$ and a polynomial $P_n(t)$.
         For the general picture see \cite{LNN19,LNN20,LN14Erhart,LSz17}. Compare also with \ref{ss:6.1}.

        Another motivation  for considering the polynomial  $P_n$ is the fact that it appears as a correction term  in  Theorem 
        \ref{Thm:zetaSurgery:ZHS} (the right hand side), 
        and (since this correction term is a polynomial)
        it survives in the `polynomial part' identity of Theorem 
        \ref{Thm:PolSurgery} as well. 

       One can think about the polynomials $P_n$ as elementary `atoms'
        from which several  multivariable  functions can be shuffled. 
       
        \begin{example}\label{rem:2.3}
           Note that the reduced function $f_\Gamma(\mathbf{t}_{\mathcal{N}})$ can be expressed in the form 
            \[
                f_{\Gamma}(\mathbf{t}_{\mathcal{N}}) = \prod_{n\in\mathcal{N}_{\mathrm{n}}}P_n(x_n)\prod_{n\in\mathcal{N}_{\mathrm{e}}}\left(\frac{1}{1-x_n}-P_n(x_n)\right) = \prod_{n\in\mathcal{N}_{\mathrm{n}}}\Delta_n(x_n)\prod_{n\in\mathcal{N}_{\mathrm{e}}}\frac{\Delta_n(x_n)}{1-x_n}, 
            \]
            where $x_n = \mathbf{t}_{\mathcal{N}}^{E_{n,\Gamma}^*/\omega_n}$ for every node $n$ (compare with   (\ref{eq:reduced_duals})).
        \end{example}
        
        \begin{figure}
            \centering
            \begin{tikzpicture}[roundnode/.style={circle, draw=black, fill=black, very thick, inner sep = 1},line width=1,x=\graphscale,y=\graphscale]
        	\node[roundnode] (1) {};
        	\path (1)++(1.5,0) node[roundnode] (2) {};
        	\path (1)++(3,0) node[roundnode] (3) {};
        	\path (1)++(120:1.5) node[roundnode] (10) {};
        	\path (1)++(-120:1.5) node[roundnode] (11) {};
        	\path (2)++(90:1.5) node[roundnode] (20) {};
        	\path (3)++(60:1.5) node[roundnode] (30) {};
        	\path (3)++(-60:1.5) node[roundnode] (31) {};
        	
        	\draw (10)--(1)--(11) (1)--(3) (2)--(20) (30)--(3)--(31);
        	\begin{small}
        		\renewcommand{\r}{0.5}
        		\path (1)++(20:\r) node {$19$};	
        		\path (1)++(100:\r) node {$3$};	
        		\path (1)++(-100:\r) node {$4$};
        		\path (2)++(20:\r) node {$5$};
        		\path (2)++(160:\r) node {$7$};
        		\path (2)++(70:\r) node {$2$};	
        		\path (3)++(80:\r) node {$2$};
        		\path (3)++(-80:\r) node {$3$};
        		\path (3)++(160:\r) node {$19$};	
        	\end{small}
        \end{tikzpicture}
        \hspace{2cm}
        \begin{tikzpicture}[roundnode/.style={circle, draw=black, fill=black, very thick, inner sep = 1},line width=1,x=\graphscale,y=\graphscale]
        	\node[roundnode] (1) {};
        	\path (1)++(3,0) node[roundnode] (2) {};
        	\path (2)++(3,0) node[roundnode] (3) {};
            \path (1)++(1.25,0) node (1arrow) {};
            \path (2)++(-1.25,0) node (2Larrow) {};
            \path (2)++(1.25,0) node (2Rarrow) {};
        	\path (1)++(120:1.5) node[roundnode] (10) {};
        	\path (1)++(-120:1.5) node[roundnode] (11) {};
        	\path (2)++(90:1.5) node[roundnode] (20) {};
        	\path (3)++(60:1.5) node[roundnode] (30) {};
        	\path (3)++(-60:1.5) node[roundnode] (31) {};
            \path (3)++ (-1.25,0) node (3arrow) {};
        	
        	\draw (10)--(1)--(11) (2)--(20)  (30)--(3)--(31);
            \draw[->] (1)--(1arrow);
            \draw[->] (2)--(2Larrow);
            \draw[->] (2)--(2Rarrow);
            \draw[->] (3)--(3arrow);
        	\begin{small}
        		\renewcommand{\r}{0.5}
        		\path (1)++(20:\r) node {$19$};	
        		\path (1)++(100:\r) node {$3$};	
        		\path (1)++(-100:\r) node {$4$};
        		\path (2)++(20:\r) node {$5$};
        		\path (2)++(160:\r) node {$7$};
        		\path (2)++(70:\r) node {$2$};	
        		\path (3)++(80:\r) node {$2$};
        		\path (3)++(-80:\r) node {$3$};
        		\path (3)++(160:\r) node {$19$};	
        	\end{small}
        \end{tikzpicture}
        \caption{}
        \label{fig:splice1}
        \end{figure}
        \begin{example}\label{example:part:0}
        The splice diagram from the left hand side of Figure \ref{fig:splice1}  decomposes as shown by the right hand side of the same picture. The corresponding polynomials are
        \begin{gather*}
            P_1(t) = t+t^2+t^5 = \frac{1}{1-t} - \frac{1-t^{12}}{(1-t^3)(1-t^4)},\quad P_2(t) = 1+t=\frac{1-t^2}{1-t},\  \\P_3(t)=t = \frac{1}{1-t} -\frac{1-t^6}{(1-t^2)(1-t^3)}, \ \ \mbox{and}
             \end{gather*}
            \[ f_\Gamma(\mathbf{t}_{\mathcal{N}}) =
            \frac{1-x_1^{12}}{(1-x_1^3)(1-x_1^4)}\cdot\frac{1-x_2^2}{1-x_2}\cdot \frac{1-x_3^6}{(1-x_3^2)(1-x_3^3)}.
        \]
        \end{example}
    
    \section{Surgery formula for the topological Poincar\'e series}\label{s:surgtopseries}
        \subsection{The `peeled'   graphs}\label{ss:inducedgraphs} For notations $\mathcal{N}_{\mathrm{n}}$
        and  $\mathcal{N}_{\mathrm{e}}$ see \ref{ss:NOT}. Clearly, they can be defined for the graph $\Gamma$ as well. 
          
Let $\mathcal{P}(\mathcal{N}_{\mathrm{e}})$
        be the power  set of $\mathcal{N}_{\mathrm{e}}$. Then we define    $\mathcal{P}^*(\mathcal{N}_{\mathrm{e}})$ as 
         $\mathcal{P}(\mathcal{N}_{\mathrm{e}})$ if $|\mathcal{N}|\not=2$ (or, equivalently,
        if $\mathcal{N}_{\mathrm{n}}\not=\emptyset$)
         and  $\mathcal{P}(\mathcal{N}_{\mathrm{e}})\setminus 
         \{\emptyset\}$ otherwise.

            Associated with any subset  $I\in \mathcal{P}^*(\mathcal{N}_{\mathrm{e}})$  we construct the plumbing graph $\Gamma_I$ as follows. Consider the splice diagram associated with  $\Gamma$ and  delete all the 
            end-nodes  belonging to $\mathcal{N}_{\mathrm{e}}\setminus I $ and all the legs connected to them (together with their end-vertices). By assumption regarding $I$, the resulted splice diagram is not empty, and it might be non-minimal as well. 
            Then, from this new splice diagram, one constructs the plumbing graph $\Gamma_I$ by the algorithm from \ref{ss:splicetoplumb}.
           
         Clearly, corresponding to $I=\mathcal{N}_{\mathrm{e}}$ we have $\Gamma_I=\Gamma$. 

Our goal is to relate the invariants associated with $\Gamma$ and $\Gamma_I$, eg. the functions $f_\Gamma$ and $f_{\Gamma_I}$. In fact, in the final surgery formula (Theorem \ref{Thm:zetaSurgery:ZHS}) all the functions 
$\{f_{\Gamma_I}\}_{I\in \mathcal{P}^*(\mathcal{N}_{\mathrm{e}})}$ will appear.

\vspace{2mm}

For the next definitions we fix some $I\in  \mathcal{P}^*(\mathcal{N}_{\mathrm{e}})$. 

\vspace{2mm}

        For each end-node $n\in\mathcal{N}_{\mathrm{e}}\setminus I$ let $e_n$ be the end-vertex of the leg in $\Gamma_I$ attached to $v_n$ (where this  leg corresponds in the splice diagram $\mathfrak{S}$ 
        to the edge connecting $n$ and $v_n$). Recall that $\Gamma$ can be obtained 
        from $\Gamma_I$ if we proceed by the algorithm from \ref{ss:splicetoplumb}
        and we glue back the plumbing graphs of the star-shaped diagrams associated with the end-nodes $\mathcal{N}_{\mathrm{e}}\setminus I$.
        Now, we choose a vertex $\theta_n$ in $\Gamma_I$  as follows.
        If  $\theta_{v_n,n}$ (well-defined by this gluing, see
        \ref{ss:splicetoplumb})  is a bamboo vertex in $\Gamma$ 
        then $\theta_n:=\theta_{v_n,n}$ identified with that  vertex of $\Gamma_I$.
        Otherwise set $\theta_n$ to be the neighbour of $v_n$ in the path from $v_n$ to $e_n$ (in $\Gamma_I$). In this case,  in the graph  $\Gamma$ too we mark by 
        $\theta_n$ the neighbour of $v_n$ in the path from $v_n$ to $n$.

        Associated with $I$ we define the set $\Theta_I := \{\theta_n\mid n\in\mathcal{N}_{\mathrm{e}}\setminus I\}$ and
        \begin{equation}\label{eq:JI}
        	J_I := \mathcal{N}(\Gamma_I)\cup\Theta_I=\mathcal{N}_{\mathrm{n}}\cup I\cup\Theta_I \subset \mathcal{V}(\Gamma_I) \ \ \mbox{of cardinality $|J_I|=|\mathcal{N}|$}.
        \end{equation}
         These vertices will play a crucial role in the reduction of the corresponding Poincar\'e series.

        Finally, for any $n\in\mathcal{N}$ we define the cycles
        $X_n\in L(\Gamma)$ and $Y_n\in L(\Gamma_I)$
        	\begin{align}\label{eq:reduced_duals}
        	    X_n = \frac{1}{\omega_n}\restr{E_{n,\Gamma}^*}{\mathcal{N}}\quad\mbox{and}\quad Y_n = \begin{cases}
        			\frac{1}{\omega_n}\restr{E_{n,\Gamma_I}^*}{J_I}& \mbox{if} \ \  n\in \mathcal{N}_{\mathrm{n}}\cup I,\\
        			\restrd{E_{e_n,\Gamma_I}^*}{J_I}& \mbox{if} \ \  n\in\mathcal{N}_{\mathrm{e}}\setminus I,
        		\end{cases}
        	\end{align}
        	where  $E_{v,\Gamma}^*\in L(\Gamma)$, $E_{v,\Gamma_I}^*\in L(\Gamma_I)$. These cycles will be used to write the reduced functions $f_{\Gamma}(\mathbf{t}_{\mathcal{N}})$, $f_{\Gamma_I}(\mathbf{t}_{J_I})$ in a more compact form. In fact, they provide a formal change of variables 
            from  $\mathbf{t}_{\mathcal{N}}$ to $\mathbf{t}_{J_I}$. This will be discussed in section \ref{ss:varchange}.

            Let $\mathcal{B} = (\mathcal{B}_{u,v})_{u,v\in\mathcal{N}}$ be the orbifold intersection matrix of $\Gamma$ (cf. \cite[\S 4.1.3]{BN07}).
            It can be defined via its inverse, which 
            equals  $\mathcal{B}^{-1} = ((E_{u,\Gamma}^*,E_{v,\Gamma}^*))_{u,v\in\mathcal{N}}$. We have that $\mathcal{B}_{u,v}=0$ if $u$ and $v$ are \emph{not} connected by a  bamboo. This means that for any $l = \sum_{n}a_n\restr{E_{n,\Gamma}^*}{\mathcal{N}} = \sum_nm_nE_{n,\Gamma}$, we have
        \begin{equation}\label{eq:Orbifold}
            a_{v} = \mathcal{B}_{v,v} m_v+ \sum_{u\sim v} \mathcal{B}_{v,u}m_u.
        \end{equation}
        In particular, if $n\in\mathcal{N}_{\mathrm{e}}$, then $a_n = \mathcal{B}_{n,n}m_n + \mathcal{B}_{v_n,n}m_{v_n}$. For the concrete values of $\mathcal{B}_{u,v}$ see also \cite[\S 2.1.4]{LSz17}.
        \begin{Lem}\label{lem:XnYnBasis}
            $\{X_n\mid n\in\mathcal{N}\}$ is a basis of\, $\Q\langle E_{n,\Gamma}\mid n\in\mathcal{N}\rangle\subset L(\Gamma)\otimes\Q$ and $\{Y_n\mid n\in \mathcal{N}\}$ is a basis of $\Q\langle E_{v,\Gamma_I}\mid v\in J_I\rangle\subset L(\Gamma_I)\otimes\Q$.
        \end{Lem}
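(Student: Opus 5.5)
Since both families have $|\mathcal{N}|=|J_I|$ elements, it is enough to show that each spans its space, or equivalently that each is linearly independent. The natural route goes through nonsingular Gram-type matrices of dual cycles restricted to the relevant vertices.

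\emph{The family $\{X_n\}$.} The factors $1/\omega_n$ are nonzero scalars, so it suffices to treat the restrictions $\restr{E^*_{n,\Gamma}}{\mathcal{N}}$, $n\in\mathcal{N}$. Their coordinate matrix in the basis $\{E_{u,\Gamma}\}_{u\in\mathcal{N}}$ is $(m_u(E^*_{n,\Gamma}))_{u,n\in\mathcal{N}}=(-(E^*_{u,\Gamma},E^*_{n,\Gamma}))_{u,n}=-\mathcal{B}^{-1}$. The matrix $\mathcal{B}^{-1}$ is a principal submatrix of the inverse of the negative definite form. It is therefore negative definite, hence invertible. (Equivalently, one may invoke (\ref{eq:Orbifold}): the map $m\mapsto a$ is given by $\mathcal{B}$, which is invertible.) Thus the $X_n$ form a basis.

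\emph{The family $\{Y_n\}$.} The same argument works for $\Gamma_I$, which is again negative definite and unimodular since it comes from a splice diagram via \ref{ss:splicetoplumb}. For any subset $J\subset\mathcal{V}(\Gamma_I)$, the matrix $(m_u(E^*_{v,\Gamma_I}))_{u,v\in J}=(-(E^*_u,E^*_v))_{u,v\in J}$ is a principal submatrix of $(-\mathcal{I}_{\Gamma_I})^{-1}$, so it is positive definite. Hence the restrictions $\{\restr{E^*_{v,\Gamma_I}}{J}\}_{v\in J}$ form a basis of $\Q\langle E_v\mid v\in J\rangle$. Take $J=J_I$. For $n\in\mathcal{N}_{\mathrm{n}}\cup I$, $Y_n$ is a nonzero multiple of $\restr{E^*_{n,\Gamma_I}}{J_I}$ with $n\in J_I$. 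For $n\in\mathcal{N}_{\mathrm{e}}\setminus I$, however, $Y_n$ uses the end vertex $e_n$, which is not in $J_I$, instead of $\theta_n$.

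\emph{The main obstacle} is to compare $\restr{E^*_{e_n}}{J_I}$ with $\restr{E^*_{\theta_n}}{J_I}$. Here $\theta_n$ lies on the leg (bamboo) from $v_n$ to $e_n$ in $\Gamma_I$, and all other vertices of $J_I$ lie on the other side of $\theta_n$, or equal $\theta_n$. Consider $u\in J_I$ with $u\neq\theta_n$. The path $\Gamma_I[u,e_n]$ contains $\Gamma_I[u,\theta_n]$ together with the bamboo segment beyond $\theta_n$. Using $m_u(E^*_v)=\det(\Gamma\setminus\Gamma[u,v])$, removing the extra segment leaves the same components except the part of the leg beyond $\theta_n$. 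That part is absent in $\Gamma\setminus\Gamma[u,e_n]$, and it is present but detached in $\Gamma\setminus\Gamma[u,\theta_n]$. This gives $m_u(E^*_{\theta_n})=c_n\, m_u(E^*_{e_n})$, where $c_n$ is the determinant of the sub-bamboo strictly beyond $\theta_n$ toward $e_n$. The same relation holds at $u=\theta_n$, since $\Gamma\setminus\Gamma[\theta_n,\theta_n]$ also splits off that sub-bamboo. So $\restr{E^*_{\theta_n}}{J_I}=c_n\,Y_n$ with $c_n$ a positive integer.

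Therefore $\{Y_n\}$ differs from the basis $\{\restr{E^*_{v,\Gamma_I}}{J_I}\}_{v\in J_I}$ only by nonzero rescalings, and so it is a basis. I expect the only delicate point to be checking that $\theta_n$ really lies on the $v_n$–$e_n$ leg in every case of its definition, including the case $\theta_n=$ neighbour of $v_n$. It must also be checked that no other vertex of $J_I$ sits beyond it; this holds because the leg contains no nodes and the distinct $\theta_{n'}$ lie on different legs.
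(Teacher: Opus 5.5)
Your proof is correct. For $\{X_n\}$ it is the paper's argument: the coordinate matrix is $-\mathcal{B}^{-1}$, and you add the justification that this is a principal submatrix of the definite matrix $\mathcal{I}^{-1}$.

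For $\{Y_n\}$ you share the paper's reduction. For $n\in\mathcal{N}_{\mathrm{e}}\setminus I$ you rescale $Y_n$ by $c_n=\det\big((\Gamma_I)_{\theta_n,e_n}\big)$ to obtain $\restr{E^*_{\theta_n,\Gamma_I}}{J_I}$. You then prove the key fact, that $\{\restr{E^*_{v,\Gamma_I}}{J_I}\}_{v\in J_I}$ is a basis, by a different route.

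\emph{The paper's route.} It blows up each $\theta_n$ so that it becomes a node of an auxiliary graph $\Gamma_I'$. It notes that $m_u(E^*_v)$ is unchanged for $u,v\in J_I$. It then appeals to the invertibility of the orbifold intersection matrix of $\Gamma_I'$, so the second part literally reduces to the first.

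\emph{Your route.} You observe directly that $(m_u(E^*_v))_{u,v\in J}$ is a principal submatrix of the positive definite matrix $(-\mathcal{I}_{\Gamma_I})^{-1}$, for any $J\subset\mathcal{V}(\Gamma_I)$.

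\emph{Comparison.} Your route is shorter and needs no auxiliary graph. It works for arbitrary index sets, and it is really the underlying reason $\mathcal{B}$ is invertible at all. The paper's route keeps everything inside the orbifold-matrix formalism it uses elsewhere. You also prove the rescaling identity $\restr{E^*_{\theta_n,\Gamma_I}}{J_I}=c_n Y_n$ from $m_u(E^*_v)=\det(\Gamma_I\setminus\Gamma_I[u,v])$, whereas the paper only asserts it.

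One small inaccuracy: $e_n$ can belong to $J_I$, namely when $\theta_n=e_n$. This happens in Example \ref{example:part:1}, where $\theta_3=E_5=e_3$. There the sub-bamboo beyond $\theta_n$ is empty, so $c_n=1$ and your argument still goes through. The sentence claiming $e_n\notin J_I$ should be adjusted accordingly.
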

        \begin{proof}
            First, note that $\{\restr{E^*_{n,\Gamma}} {\mathcal{N}}\}_{n\in\mathcal{N}}$  is a basis for $\Q\langle E_{n,\Gamma}\mid n\in\mathcal{N}\rangle$. 
            Indeed, the transition matrix between 
            $\{\restr{E^*_{n,\Gamma}}{\mathcal{N}}\}_{n\in\mathcal{N}}$ and $\{E_{n,\Gamma}\}_{n\in\mathcal{N}}$ is given by $(m_{u}(E_{v,\Gamma}^*))_{u,v\in\mathcal{N}} = ((-E_{u,\Gamma}^*,E_{v,\Gamma}^*))_{u,v\in\mathcal{N}} = (-\mathcal{B})^{-1}$.
            Then, since  $\restr{E_{n,\Gamma}^*}{\mathcal{N}} = \omega_n X_n$, 
            $\{X_n\}_{n\in\mathcal{N}}$ is a basis as well. 

            The second part follows in a similar way.  
            On one hand,  $\omega_n Y_n = \restr{E_{n,\Gamma_I}^*}{J_I}$ for $n\in\mathcal{N}_{\mathrm{n}}\cup I$. On the other hand, $\det\big((\Gamma_I)_{\theta_n,e_n}\big) \cdot Y_n=\det\big((\Gamma_I)_{\theta_n,e_n}\big) \cdot \restr{E_{e_n,\Gamma_I}^*}{J_I} = \restr{E_{\theta_n,\Gamma_I}^*}{J_I}$ for $n\in\mathcal{N}_{\mathrm{e}}\setminus I$ (recall from \ref{ss:splice} the definition of $(\Gamma_I)_{\theta_n,e_n}$).  
            Thus, it suffices to show that
            $\{\restr{E_{v,\Gamma_I}^*}{J_I}\}_{v\in J_I}$ is a basis of $\Q\langle E_{v,\Gamma_I}\mid v\in J_I\rangle$. For this consider the tree $\Gamma_I'$ which is obtained by blowing up each vertex  $\theta_n$ in $\Gamma_I$, such that each $\theta_n$ becomes a node. In this case, we have  $m_u(E_{v,\Gamma_I'}^*) = m_u(E_{v,\Gamma_I}^*)$ for all $u,v\in J_I$. Thus, the transition matrix from $\{\restr{E_{v,\Gamma_I}^*}{J_I}\}_{v\in J_I}$ to $\{E_{v,\Gamma_I}\}_{v\in J_I}$ is the negative inverse of the orbifold intersection matrix of $\Gamma_I'$.
        \end{proof}
        
            With all these notations we can express the function $f_{\Gamma_I}$ associated with the graph $\Gamma_I$ and reduced to $J_I$ in the following form. 
        \begin{Lem}\label{lem:induced_expr}
        	Set $y_n := \mathbf{t}_{J_I}^{Y_n}$ for every $n\in\mathcal{N}$. Then we have
        	\[
        		f_{\Gamma_I}(\mathbf{t}_{J_I}) = \prod_{n\in\mathcal{N}_{\mathrm{n}}\cup I}\frac{(1-y_n^{\omega_n})^{\delta_n-2}}{\prod\limits_{\substack{e\in\mathcal{E}\\n-_be}}(1-y_n^{\omega_{e}})}\prod_{n\in \mathcal{N}_{\mathrm{e}}\setminus I}\frac{1}{1-y_n}.
        	\]
        \end{Lem}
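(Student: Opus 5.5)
The plan is to start from the defining product $f_{\Gamma_I}(\mathbf{t})=\prod_{v\in\mathcal{V}(\Gamma_I)}(1-\mathbf{t}^{E^*_{v,\Gamma_I}})^{\delta_v-2}$ and reduce to the variables $J_I$. Only nodes and ends of $\Gamma_I$ contribute, since bamboo vertices have exponent $0$. So
\[
f_{\Gamma_I}(\mathbf{t}_{J_I})=\prod_{n\in\mathcal{N}(\Gamma_I)}\big(1-\mathbf{t}_{J_I}^{E^*_{n,\Gamma_I}}\big)^{\delta_n-2}\prod_{e\in\mathcal{E}(\Gamma_I)}\big(1-\mathbf{t}_{J_I}^{E^*_{e,\Gamma_I}}\big)^{-1}.
\]
Here $\mathcal{N}(\Gamma_I)=\mathcal{N}_{\mathrm{n}}\cup I$, and the valencies $\delta_n$ agree with those in $\Gamma$: deleting an end-node $n'$ adjacent to $v_{n'}$ replaces the edge $v_{n'}\sim n'$ by the leg $v_{n'}\sim e_{n'}$. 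The ends of $\Gamma_I$ split into two kinds: the original ends $e$ with $n-_be$ for some $n\in\mathcal{N}_{\mathrm{n}}\cup I$, and the new ends $e_{n'}$, $n'\in\mathcal{N}_{\mathrm{e}}\setminus I$.

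The new ends give the last factor of the claim directly, since $\mathbf{t}_{J_I}^{E^*_{e_{n'},\Gamma_I}}=y_{n'}$ by (\ref{eq:reduced_duals}). For a node $n\in\mathcal{N}_{\mathrm{n}}\cup I$ we have $y_n^{\omega_n}=\mathbf{t}_{J_I}^{E^*_{n,\Gamma_I}}$ by definition, which gives the numerator. It remains to show that an original end $e$ with $n-_be$ satisfies
\[
\restr{E^*_{e,\Gamma_I}}{J_I}=\omega_e Y_n=\frac{1}{d_{n,e}}\restr{E^*_{n,\Gamma_I}}{J_I}.
\]

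To prove this identity, compare multiplicities coordinatewise using the splice-diagram formula $m_u(E^*_v)=w(\mathfrak{S}[u,v])$ (Lemma 20.2 of \cite{eisenbud2016three}), applied in the (possibly non-minimal) splice diagram of $\Gamma_I$.

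For $u\in\mathcal{N}_{\mathrm{n}}\cup I$, the path from $u$ to $e$ runs through $n$. Compared with the path from $u$ to $n$, it drops the factor $d_{n,e}$ (now the edge $n\sim e$ lies on the path) and gains nothing, because $e$ is an end with no further weights. Hence $m_u(E^*_e)=m_u(E^*_n)/d_{n,e}$.

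For $u=\theta_{n'}$, a bamboo vertex on the leg $v_{n'}\sim e_{n'}$, the splice formula does not apply directly. Here I would use the same trick as in the proof of Lemma \ref{lem:XnYnBasis}: blow up $\theta_{n'}$ to make it a node, which does not change the multiplicities $m_u(E^*_v)$ for $u,v\in J_I$. Then apply the splice formula in the blown-up diagram, or alternatively use $m_u(E^*_v)=\det(\Gamma_I\setminus\Gamma_I[u,v])$. The path from $\theta_{n'}$ to $e$ contains the path from $\theta_{n'}$ to $n$, and the complements differ exactly by the branch at $n$ containing $e$. That branch has determinant $d_{n,e}$, and determinants multiply over the disjoint components. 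So again $m_{\theta_{n'}}(E^*_e)=m_{\theta_{n'}}(E^*_n)/d_{n,e}$. The same determinant argument in fact handles all $u\in J_I$ uniformly, so it may be cleaner to use it throughout.

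Substituting these identities into the product gives the claimed formula. The main obstacle is bookkeeping rather than depth: one must check that the non-minimal splice diagram (legs with weight $1$) and the $0$-absorptions do not spoil the determinant/splice multiplicity identities. In particular, when $d_{n,e}=1$ or $\theta_{n'}$ coincides with a neighbour of $v_{n'}$, the product over $n-_be$ must still be correct. For these degenerate cases I would check directly that such legs contribute consistently, a weight-$1$ leg giving a factor $1-y_n^{\omega_n}$ with the same formula.
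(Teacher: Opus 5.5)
Your proposal is correct and follows the paper's proof. You split the reduced defining product into node factors, original-end factors and new-end factors, and reduce everything to the identity $d_{n,e}\restr{E^*_{e,\Gamma_I}}{J_I}=\restr{E^*_{n,\Gamma_I}}{J_I}$. The paper gets that identity by citing \cite[Lemma 2.2.1]{LSz17}: the cycle $-E^*_{n,\Gamma_I}+d_{n,e}E^*_{e,\Gamma_I}$ is supported on the bamboo $(\Gamma_I)_{n,e}$, which misses $J_I$. Your determinant computation $m_u(E^*_v)=\det(\Gamma_I\setminus\Gamma_I[u,v])$ proves exactly this coordinatewise. It also treats all $u\in J_I$ uniformly, so the weight-$1$ and other degenerate cases need no separate check.
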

        \begin{proof}
        	By definition (\ref{eq:zeta:def}), the function $f_{\Gamma_I}$ reduced to $J_I$ can be written as 
        	\[
        		f_{\Gamma_I}(\mathbf{t}_{J_I}) = \prod_{n\in\mathcal{N}_{\mathrm{n}}\cup I}\frac{(1-\mathbf{t}_{J_I}^{E_{n,\Gamma_I}^*})^{\delta_n-2}}{\prod\limits_{\substack{e\in\mathcal{E}\\n-_be}}(1-\mathbf{t}_{J_I}^{E_{e,\Gamma_I}^*})}\prod_{n\in \mathcal{N}_{\mathrm{e}}\setminus I}\frac{1}{1-\mathbf{t}_{J_I}^{E_{e_n,\Gamma_I}^*}}.
        	\]
        	Since for $e\in\mathcal{E}$, $n-_be$, the cycle $-E_{n,\Gamma_I}^*+d_{n,e}E_{e,\Gamma_I}^*$ is supported on $(\Gamma_I)_{n,e} $
            (cf. \cite[Lemma 2.2.1]{LSz17}), we have $d_{n,e}\restr{E_{e,\Gamma_I}^*}{J_I} = \restr{E_{n,\Gamma_I}^*}{J_I} = \omega_n Y_n$ and the result follows.
        \end{proof}

    \subsection{Change of variables}\label{ss:varchange}

        In the following, we show that there is a formal  change of variables from $\mathbf{t}_{J_I}$ to $\mathbf{t}_{\mathcal{N}}$ that transforms $y_n:=\mathbf{t}_{J_I}^{Y_n}$ into $x_n:=\mathbf{t}_{\mathcal{N}}^{X_n}$.
       
        By Lemma \ref{lem:XnYnBasis} we can consider the $\Q$--linear isomorphism $$\varphi_I\colon \Q\langle E_{v,\Gamma_I}\mid v\in J_I\rangle\to \Q\langle E_{n,\Gamma}\mid n\in\mathcal{N}\rangle$$ 
        given by $\varphi_I(Y_n) = X_n$ for $n\in\mathcal{N}$.
        Then  $\varphi_I$ induces  a change of variable map from $\mathbf{t}_{J_I}$ to $\mathbf{t}_{\mathcal{N}}$. More precisely, if $\varphi_I$ is given by
        \[
        \varphi_I(E_{v,\Gamma_I}) = \sum_{n\in\mathcal{N}}\alpha_{v,n} E_{n,\Gamma},\quad v\in J_I ;
        \]
        then, for any $l\in  \Q\langle E_{v,\Gamma_I}\mid v\in J_I\rangle$, 
        \[
        	\mathbf{t}_{J_I}^l \mapsto \mathbf{t}_{\mathcal{N}}^{\varphi_I(l)}\colon t_v\mapsto \prod\limits_{n\in\mathcal{N}}t_n^{\alpha_{v,n}}.
        \]
        In the following, the map $\varphi_I$ will be also called the  change of variable map from $\Gamma_I$ to $\Gamma$.
        \begin{Def}\label{def:operator}
        For any series $S(\mathbf{t}_{J_I})$ in variables $\mathbf{t}_{J_I}$ we will denote by  $(\varphi_I^*S)(\mathbf{t}_\mathcal{N})$ the series in variables $\mathbf{t}_\mathcal{N}$, which is obtained from $S(\mathbf{t}_{J_I})$ by applying the  change of variables given by $\varphi_I$. 
        \end{Def}
        \begin{Not}
            If $S(\mathbf{t}_{V})$ is a series in variables $\mathbf{t}_V$, with $J_I\subseteq V$, then $(\varphi_I^*S)(\mathbf{t}_\mathcal{N})$ denotes the series in $\mathbf{t}_{\mathcal{N}}$ obtained by applying $\varphi_I^*$ to the reduced series $S(\mathbf{t}_{J_I}):=S(\mathbf{t}_{V})_{t_v\mapsto1,v\in V\setminus J_I}$. 
        \end{Not}

        Now, applying the operator $\varphi^*_I$ from Definition \ref{def:operator} to the function $f_{\Gamma_I}(\mathbf{t}_{J_I})$,  we get the following expression in variables $\mathbf{t}_{\mathcal{N}}$
        \begin{equation}\label{eq:reducedzeta_xn}
         (\varphi_I^*f_{\Gamma_I})(\mathbf{t}_{\mathcal{N}}) = \prod_{n\in\mathcal{N}_{\mathrm{n}}\cup I}\frac{(1-x_n^{\omega_n})^{\delta_n-2}}{\prod_{n-_be}(1-x_n^{\omega_{e}})}\prod_{n\in \mathcal{N}_{\mathrm{e}}\setminus I}\frac{1}{1-x_n}, 
         \ \mbox{where} \ \ 
         x_n:= \mathbf{t}_{\mathcal{N}}^{X_n}. 
        \end{equation}
        We emphasize again that this is a series in variables $\mathbf{t}_{\mathcal{N}}$, 
        uniformly for any $I$. 
        
        \begin{Rem} {\bf Why the peeling?} \ The zeta function $Z(\mathbf {t})$, considered in all its variables 
        $\{t_v\}_{v\in\mathcal{V}}$ usually is very hard to compute and handle. On the other hand,
        (almost) all the important information coded in $Z(\mathbf {t})$ is already coded in its reduced version
        $Z(\mathbf{t}_{\mathcal{N}})$, reduced to the variables of the nodes. (In some sense, this fact is already suggested by the existence of the splice diagram as well.)
        But, if we reduce more, then usually we lose certain information.
        
         One way to understand and compute $Z(\mathbf {t}_{\mathcal{N}})$ is by surgeries, or cutting the original 
         graph (or diagram) in smaller pieces. However, if the pieces produce series in smaller number of variables, 
         then usually $Z(\mathbf {t}_{\mathcal{N}})$ in $|\mathcal{N}|$ variables, cannot be recovered. Eg. this happens if they are too small. 
         Hence, one has to try to consider smaller (but not very small)  pieces which still have $|\mathcal{N}|$ `basic', well-chosen vertices, which produce series in  $|\mathcal{N}|$ variables and carry essential global information, and which (after certain change of variables) reconstruct $Z(\mathbf{t}_{\mathcal{N}})$.
        
         Such possible sub-diagrams are the peelings of $\mathfrak{S}$. In their case the deleted nodes $\mathcal{N}_{{\rm e}}\setminus I$
         are replaced by the `gluing vertices' $\Theta_I$  of the same cardinality. 
        \end{Rem}
        
    \subsubsection{\bf Properties of the operator  $\varphi_I$}
        Though the Definition \ref{def:operator} gives a well defined operator  $\varphi_I$,
        from that definition we do not immediately see its structure. In fact, this operator 
        preserves all the coefficients of the $\mathcal{N}_n\cup I$ coordinates 
        (recall that these are exactly the common coordinates of the index sets 
        $J_I$ and $\mathcal{N}$,  $J_I\cap\mathcal{N}=\mathcal{N}_n\cup I $)
        and the modification of the other coefficients can also be determined naturally from the decorations of the splice diagram.

         \begin{Lem}\label{lem:varRestr}
            $\restr{\varphi_I(l)}{\mathcal{N}_{\mathrm{n}}\cup I} = \restr{l}{\mathcal{N}_{\mathrm{n}}\cup I}$ for all $l\in \Q\langle E_{v,\Gamma_I}\mid v\in J_I\rangle$.
             In particular, if $I=\mathcal{N}_{\mathrm{e}}$ then $\varphi_{\mathcal{N}_{\mathrm{e}}}^*$ is the identity operator.
        \end{Lem}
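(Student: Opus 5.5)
Since $\varphi_I$ is $\Q$-linear and $\{Y_n\}_{n\in\mathcal{N}}$ is a basis (Lemma \ref{lem:XnYnBasis}), it suffices to check the identity $\restr{\varphi_I(Y_n)}{\mathcal{N}_{\mathrm{n}}\cup I}=\restr{Y_n}{\mathcal{N}_{\mathrm{n}}\cup I}$ on each basis element. As $\varphi_I(Y_n)=X_n$, this amounts to
\[
m_u(X_n)=m_u(Y_n)\quad\text{for all } u\in\mathcal{N}_{\mathrm{n}}\cup I,\ n\in\mathcal{N}.
\]
Both sides are coefficients of dual basis elements at nodes, so I will compute them through the splice diagram formula $m_u(E_v^*)=w(\mathfrak{S}[u,v])$ (Lemma 20.2 of \cite{eisenbud2016three}). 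I apply it in $\mathfrak{S}$ for $\Gamma$ and in the peeled (possibly non-minimal) diagram $\mathfrak{S}_I$ for $\Gamma_I$. The formula should still hold for non-minimal diagrams, since legs with weight $1$ just contribute factors $1$.

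For $n\in\mathcal{N}_{\mathrm{n}}\cup I$, the node $n$ survives in $\mathfrak{S}_I$. Take $u\in\mathcal{N}_{\mathrm{n}}\cup I$. The path $\mathfrak{S}[u,n]$ never passes through a deleted end-node, because deleted end-nodes are leaves among the nodes. Moreover the weights adjacent to the path are the same in $\mathfrak{S}$ and in $\mathfrak{S}_I$: at $v_m$ for a deleted $m$, the edge $v_m\sim m$ with weight $d_{v_m,m}$ becomes a leg of $\mathfrak{S}_I$ with the same weight at $v_m$. Indeed, peeling does not change the weights at the remaining nodes. The set $\{e: n\sim e\}$ may change only at $n$ itself, but for $n\in\mathcal{N}_{\mathrm{n}}\cup I$ the legs at $n$ are unchanged, so $\omega_n$ agrees in both diagrams. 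Hence $m_u(E^*_{n,\Gamma})=m_u(E^*_{n,\Gamma_I})$, and dividing by $\omega_n$ gives $m_u(X_n)=m_u(Y_n)$.

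For $n\in\mathcal{N}_{\mathrm{e}}\setminus I$ we have $Y_n=\restr{E^*_{e_n,\Gamma_I}}{J_I}$, where $e_n$ is the new leg end at $v_n$. For $u\in\mathcal{N}_{\mathrm{n}}\cup I$ the splice formula gives
\[
m_u(E^*_{e_n,\Gamma_I})=w(\mathfrak{S}_I[u,v_n])\cdot \frac{1}{d_{v_n,m\to e_n}},
\]
that is, the product along the path to $v_n$, then across the leg at $v_n$, with the weight on that leg excluded. On the $\Gamma$ side,
\[
m_u(E^*_{n,\Gamma})=w(\mathfrak{S}[u,n])=w(\mathfrak{S}[u,v_n))\cdot(\text{weights at }v_n\text{ off the path})\cdot \omega_n .
\]
Here the factor $\omega_n$ is the product of the leg weights at $n$, and $n$ has no other neighbours. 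Dividing by $\omega_n$ gives exactly the $\mathfrak{S}_I$ expression, since the edge $v_n\sim n$ lies on the path and is excluded in both.

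The main care point is the bookkeeping of which weights are excluded at $v_n$. The special case $u=v_n$ also needs attention, as do the cases where $\theta_{v_n,n}$ is a node, which is why the paper chose $\theta_n$ carefully. However, $\theta_n$ only affects the coordinates in $\Theta_I$, not those in $\mathcal{N}_{\mathrm{n}}\cup I$, so it plays no role here.

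For the final claim: when $I=\mathcal{N}_{\mathrm{e}}$ we have $\Theta_I=\emptyset$, $J_I=\mathcal{N}$ and $\Gamma_I=\Gamma$. The restriction identity then holds on all coordinates, so $\varphi_I$ is the identity, and hence so is $\varphi_I^*$.
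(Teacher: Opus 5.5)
Your proposal is correct and is essentially the paper's proof. The paper also reduces by linearity to the basis $Y_n\mapsto X_n$, then compares $w(\mathfrak{S}[u,n])$ with $w(\mathfrak{S}_I[u,n])$, and $w(\mathfrak{S}[u,n])/\omega_n$ with $w(\mathfrak{S}_I[u,e_n])$.

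One small inaccuracy: when $n=v_m$ for a deleted end-node $m$, the legs at $n$ do change in $\mathfrak{S}_I$, since the new leg $e_m$ appears. The division by $\omega_n$ is justified not because the legs are unchanged, but because $\omega_n$ in the definition of $Y_n$ is by definition the value computed in $\mathfrak{S}$.
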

        \begin{proof}
            By linearity, it suffices to show that for any $v\in\mathcal{N}_{\mathrm{n}}\cup I$ we have that $m_v(E_{n,\Gamma}^*)= m_v(E_{n,\Gamma_I}^*)$ for $n\in \mathcal{N}_{\mathrm{n}}\cup I$, and $\omega_n m_v(E_{e_n,\Gamma_I}^*) = m_v(E_{n,\Gamma}^*)$ for $n\in\mathcal{N}_{\mathrm{e}}\setminus I$. 
            Consider the splice diagrams $\mathfrak{S}$, $\mathfrak{S}_I$ of $\Gamma$ and $\Gamma_I$ respectively. For the first part observe that the path from $v$ to $n$ is the same, together with the surrounding weights. In other words $\mathfrak{S}_I[v,n] = \mathfrak{S}[v,n]$ and $w(\mathfrak{S}_I[v,n]) = w(\mathfrak{S}[v,n])$, hence $m_v(E_{n,\Gamma}^*)= m_v(E_{n,\Gamma_I}^*)$.
            For the second part observe that $\mathfrak{S}_I[e_n,v]\setminus\{e_n\} = \mathfrak{S}[n,v]\setminus\{n\}$, thus $ w(\mathfrak{S}_I[e_n,v])=w(\mathfrak{S}(n,v])$. Therefore $\omega_n\cdot m_v(E_{e_n,\Gamma_I}^*) = w(\mathfrak{S}[n,v])= m_v(E_{n,\Gamma}^*)$.
        \end{proof} 

        For the remaining coefficients, we have to compare the 
        $\mathcal{N}_{\mathrm{e}}\setminus I$ coefficients of the cycles $X_n$ with the 
        $\Theta_I=\{\theta_n\,:\, n\in \mathcal{N}_{\mathrm{e}}\setminus I\}$ coefficients of the cycles $Y_n$. We observe that in the definition of the vertex $\theta_n$ we had some kind of freedom: 
        in fact, one could choose any other vertex from the leg $(\Gamma_I)_{v_n,e_n}$ to be $\theta_n$ and serve as a base element for the splice operation. 
    
        With all such choices  the defining property  $\varphi_I(Y_n)=X_n$, and Lemma \ref{lem:varRestr} still hold. The present choice of $\theta_n$ 
        from \ref{ss:inducedgraphs} gives a simpler (and natural) expression of $\varphi_I(l)$ outside $\mathcal{N}_{\mathrm{n}}\cup I$ as we will determine  next. 

        To do this we need a way to determine the $E_{w}$ coordinate ($\delta_{w}=2$) of a cycle $l$ which is given by a linear combination of the cycles $E_n^*$.  Fix two vertices $u,v$ connected by a bamboo, and assume that $l\in L(\Gamma)\otimes\Q$ satisfies $(l,E_w)_{\Gamma}=0$ for $w\in\Gamma(u,v)$ (that is, $a_w(l)=0$, cf. (\ref{eq:coords})). Let us assume that $\Gamma(u,v)$ is non-empty and denote $u',v'$ the vertices connecting to $u,v$ respectively in $\Gamma[u,v]$. Consider the reduced cycle $\restr{l}{\Gamma(u,v)}\in L(\Gamma(u,v))\otimes \Q$. We have that $(\restr{l}{\Gamma(u,v)},E_w)_{\Gamma(u,v)} =0$ for $w\in\Gamma(u',v')$, $(\restr{l}{\Gamma(u,v)},E_{u'})_{\Gamma(u,v)}  = -m_u(l)$, and $(\restr{l}{\Gamma(u,v)},E_{v'})_{\Gamma(u,v)}  = -m_v(l)$. In other words
        $\restr{l}{\Gamma(u,v)} = m_u(l)E_{u',\Gamma(u,v)}^* + m_v(l)E_{u',\Gamma(u,v)}^*$.
        Thus for any $\theta\in\Gamma(u,v)$ we have
        \begin{equation}\label{eq:midBamboo}
            m_{\theta}(l)\cdot\det\Gamma(u,v)  = m_{u}(l)\cdot \det\Gamma(\theta,v) +m_{v}(l)\cdot \det\Gamma(u,\theta).
        \end{equation}
        To extend the domain of this formula we define $\det\Gamma(w,w):=0$. With this, the above formula holds when $u$ is connected to $v$, or $\theta\in\{u,v\}$ as well.
    
        \begin{Rem}\label{Rem:varLocality}
            It is important to mention that all the values $\mathcal{B}_{n,n}$, $\mathcal{B}_{v_n,n}$, $\det\Gamma(v_n,n)$, $\det\Gamma(v_n,\theta)$, $\det\Gamma(\theta,n)$ are computed from the subtree $\Gamma_{v_n,n}$, therefore they depend only on the weights in $\mathfrak{S}$ that are surrounding the nodes $v_n$ and  $n$. This ensures that the coefficients $m_{\theta_n}(\varphi_I^{-1}(l)) = \alpha_n\cdot m_{n}(l) + \beta_n\cdot m_{v_n}(l)$ are local with respect to the nodes $\{v_n, n\}$. 
        \end{Rem}

        \begin{Lem}\label{Lem:legRestrs}
            For each $n\in\mathcal{N}_{\mathrm{e}}$ there exist constants $\alpha_n,\beta_n\in\Q$ such that if $n\in\mathcal{N}_{\mathrm{e}}\setminus I$ then $m_{\theta_n}(\varphi_I^{-1}(l)) = \alpha_n m_n(l) + \beta_n m_{v_n}(l)$. These constants depend only on the weights in $\mathfrak{S}$ that surround the nodes $v_n$, $n$.
            In particular, if $\theta_{v_n,n} = \theta_n$, then
            \[
            \alpha_n = \frac{\det\Gamma(v_n,\theta_n)}{\det\Gamma(v_n,n)},\quad \beta_n = \frac{\det\Gamma(\theta_n,n)}{\det\Gamma(v_n,n)}.
            \]
        \end{Lem}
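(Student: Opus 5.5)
By linearity it suffices to compute $m_{\theta_n}(\varphi_I^{-1}(X_k))$ for all $k\in\mathcal{N}$. Since $\varphi_I^{-1}(X_k)=Y_k$, this means computing $m_{\theta_n}(Y_k)$. We then express it through $m_n(X_k)$ and $m_{v_n}(X_k)$. Everything reduces to comparing the splice-diagram coefficients of $E^*$'s in $\Gamma$ and $\Gamma_I$ along the leg $(\Gamma_I)_{v_n,e_n}$, which in $\Gamma$ corresponds to the bamboo from $v_n$ to $n$.

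\textbf{The case $\theta_{v_n,n}=\theta_n$.} Here $\theta_n$ is an interior vertex of the bamboo $\Gamma(v_n,n)$ in $\Gamma$.

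First consider $k\neq n$. The cycle $E^*_{k,\Gamma}$ satisfies $a_w=0$ on $\Gamma(v_n,n)$, so (\ref{eq:midBamboo}) gives
\[
m_{\theta_n}(E^*_{k,\Gamma})\det\Gamma(v_n,n)=m_{v_n}(E^*_{k,\Gamma})\det\Gamma(\theta_n,n)+m_n(E^*_{k,\Gamma})\det\Gamma(v_n,\theta_n).
\]
I then compare with $\Gamma_I$. Since $\theta_n$ is the $0$-absorption vertex, the plumbing of the leg $(\Gamma_I)_{v_n,e_n}$ up to $\theta_n$ agrees with the bamboo in $\Gamma$ up to $\theta_n$. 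By the splice formula $m_u(E^*_v)=w(\mathfrak{S}[u,v])$ together with $m_u(E_v^*)=\det(\Gamma\setminus\Gamma[u,v])$, we get:
\begin{itemize}
\item for $k\in\mathcal{N}_{\mathrm{n}}\cup I$, $m_{\theta_n}(E^*_{k,\Gamma_I})=m_{\theta_n}(E^*_{k,\Gamma})/\omega$-type ratios consistent with $Y_k$;
\item for $k\in \mathcal{N}_{\mathrm{e}}\setminus I$, the analogous statement holds with $E^*_{e_k}$.
\end{itemize}
I expect the cleanest route is to avoid case checks. In $\Gamma_I$ the vertex $\theta_n$ lies on the leg towards $e_n$, so for $l=Y_k$ we have $m_{\theta_n}(l)=m_{v_n}(l)\cdot c$ plus a term proportional to $m_{e_n}(l)$, again by (\ref{eq:midBamboo}) applied to the bamboo $\Gamma_I[v_n,e_n]$.

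The key identification is twofold. The coefficient $m_{v_n}$ is preserved, by Lemma \ref{lem:varRestr}. The quantity playing the role of $m_n$ in $\Gamma$ corresponds, after gluing, to the contribution of $e_n$. I would show this by writing both $\Gamma$ and $\Gamma_I$ as gluings along $\theta_n$: the part of the tree beyond $\theta_n$ towards $v_n$ is common to both graphs.

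For $k=n$ one checks directly that $Y_n=E^*_{e_n,\Gamma_I}|_{J_I}$ and $X_n$ satisfy the same relation. This uses that $-E^*_{\theta_n}+\det\cdot E^*_{e_n}$ is supported on the leg beyond $\theta_n$, as in Lemma \ref{lem:induced_expr}.

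Since $\varphi_I$ is defined by $Y_k\mapsto X_k$, linearity then yields $m_{\theta_n}(\varphi_I^{-1}(l))=\alpha_n m_n(l)+\beta_n m_{v_n}(l)$ for all $l$, with the stated $\alpha_n,\beta_n$.

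\textbf{The remaining case.} Here $\theta_n$ is the neighbour of $v_n$ on the leg. The same argument applies: we use the expression of $m_{\theta_n}$ on that leg in $\Gamma_I$ and in $\Gamma$ through (\ref{eq:midBamboo}), now with the endpoint replaced by the node or blown-up vertex. We obtain some $\alpha_n,\beta_n$, with no explicit form required.

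\textbf{Locality.} The dependence only on the weights around $v_n$ and $n$ follows from Remark \ref{Rem:varLocality}: all the determinants used are computed inside $\Gamma_{v_n,n}$ and the leg of $\Gamma_I$.

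\textbf{Main obstacle.} The hardest step is verifying that the coefficient of $\theta_n$ in $Y_k$ really matches the $\Gamma$-side combination for $k\ne n$. This amounts to showing that $m_{\theta_n}(E^*_{k,\Gamma_I})$ equals $m_{\theta_n}(E^*_{k,\Gamma})$ up to the same normalisation $\omega_k$. I would attack it with the determinant formula $m_{\theta}(E_k^*)=\det(\Gamma\setminus\Gamma[\theta,k])$, noting that removing $\Gamma[\theta_n,k]$ leaves, on the $n$-side, a piece of determinant $\omega_n$ times the $\Gamma_I$-piece. This is exactly where $\theta_n=\theta_{v_n,n}$ is used: the bamboo $\Gamma(\theta_n,n)$ and the $\Gamma_I$-leg beyond $\theta_n$ must have related determinants.
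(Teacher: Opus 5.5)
\textbf{There is a genuine gap.} Your framework is the right one: test the identity on the basis $Y_k\mapsto X_k$, apply (\ref{eq:midBamboo}) in $\Gamma$, and compare $\theta_n$-coefficients in $\Gamma_I$ and $\Gamma$. But the comparison that carries the lemma is never established, and where you sketch it the mechanism is wrong.

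Write $d^I$ for the analogous determinants in $\Gamma_I$. For $k\neq n$ the splice formula gives $m_{\theta_n}(E^*_{k,\Gamma_I})=c_n\,m_{\theta_n}(E^*_{k,\Gamma})$, with $\omega_kE^*_{e_k,\Gamma_I}$ in place of $E^*_{k,\Gamma_I}$ if $k\in\mathcal{N}_{\mathrm{e}}\setminus I$. Here $c_n=d^I_{\theta_n,e_n}/d_{\theta_n,n}$ is the ratio of the determinants of the two pieces beyond $\theta_n$. For $k=n$ it gives $\omega_n m_{\theta_n}(E^*_{e_n,\Gamma_I})=c_n'\,m_{\theta_n}(E^*_{n,\Gamma})$ with $c_n'=d^I_{\theta_n,v_n}/d_{\theta_n,v_n}$. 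The explicit formula amounts exactly to $c_n=c_n'=1$.

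Your claim that the $n$-side piece has determinant $\omega_n$ times the $\Gamma_I$-piece would give $c_n=1/\omega_n$, contradicting the equality you want. In Example \ref{example:part:1} (where $\theta_3=E_5$), $\Gamma_{E_5,E_3}$ has determinant $1$, the $\Gamma_I$-piece is empty, and $\omega_3=6$.

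The needed identity $d_{\theta_n,n}=d^I_{\theta_n,e_n}$ compares two genuinely different graphs: node $n$ with its legs versus the tail of a leg. It cannot be read off locally. The paper obtains it indirectly:
\begin{itemize}
\item By locality, reduce to $I=\mathcal{N}_{\mathrm{e}}\setminus\{n\}$.
\item The hypothesis $\theta_n=\theta_{v_n,n}$ then makes the \emph{$v_n$-side} graphs $\Gamma_{\theta_n,v_n}$ and $(\Gamma_I)_{\theta_n,v_n}$ identical, so $c_n'=1$.
\item The identity $\det\Gamma(v_n,\theta_n)=d_{\theta_n,v_n}d_{v_n,n}-w(\mathfrak{S}[v_n,n))d_{\theta_n,n}$, together with its $\Gamma_I$-analogue (which has the same left-hand side), forces $c_n=1$.
\end{itemize}
Your ``one checks directly'' for $k=n$ is precisely the statement $c_n'=1$. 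The support property of $-E^*_{\theta_n}+\det\cdot E^*_{e_n}$ lives entirely in $\Gamma_I$ and says nothing about $\Gamma$.

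In the general case there is a second gap. You need a \emph{single} pair $(\alpha_n,\beta_n)$ satisfying $|\mathcal{N}|$ equations. Since $c_n\neq c_n'$ in general, the $k=n$ equation does not have the same shape as the others, so ``the same argument applies'' proves nothing.

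The paper handles this by writing $m_{\theta_n}(\varphi_I^{-1}(l))=c_n\,m_{\theta_n}(l_1)+(c_n'-c_n)\,a_n(l)\,m_{\theta_n}(E^*_{n,\Gamma})$ with $l_1=\sum_ua_u(l)E^*_{u,\Gamma}$. It then uses (\ref{eq:Orbifold}) for the end-node $n$: $a_n(l)=\mathcal{B}_{n,n}m_n(l)+\mathcal{B}_{v_n,n}m_{v_n}(l)$.

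Alternatively, a two-dimensional argument works. For $k\neq n$ the pairs $(m_n(X_k),m_{v_n}(X_k))$ are all multiples of $(\omega_n,d_{v_n,n})$, and $m_{\theta_n}(Y_k)$ scales by the same factor. The $k=n$ pair $(d_{n,v_n},w(\mathfrak{S}[v_n,n)))$ is independent of that direction by the edge determinant condition. So a unique solution exists.

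Finally, the lemma asserts constants independent of $I$. Remark \ref{Rem:varLocality} does not cover $d_{\theta_n,v_n}$ or $d^I_{\theta_n,v_n}$, which are determinants of the large $v_n$-side components. Their locality again requires the determinant identity above.
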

     
        \begin{proof}
            The proof consists of three main steps. First we assume that $n\in\mathcal{N}_{\mathrm{e}}\setminus I$ and introduce constants $c_n,c_n'$ such that $\omega_u\cdot m_{\theta_n}(Y_u) = c_n\cdot m_{\theta_n}(E_{u,\Gamma}^*)$ for $u\in\mathcal{N}\setminus\{n\}$, and $\omega_n\cdot m_{\theta_n}(Y_n) = c_n'\cdot m_{\theta_n}(E_{n,\Gamma}^*)$.
            In the second step we compute $m_{\theta_n}(\varphi_I^{-1}(l))$ in terms of $m_{n}(l)$ and $m_{v_n}(l)$, that will use $c_n$, $c_n'$, (\ref{eq:Orbifold}), (\ref{eq:midBamboo}) and some additional sub-tree determinants.
            Finally in step three we will show that $\alpha_n,\beta_n$ depend only on the weights in $\mathfrak{S}$ that surround the nodes $n$ and $v_n$.

            {\bf Step 1.} 
            To construct $\alpha_n,\beta_n$ we assume that $n\in\mathcal{N}_{\mathrm{e}}\setminus I$.         
            Denote the weight in $\mathfrak{S}_I$ corresponding to $u\sim v$ with $d_{u,v}^I$. Note that $d_{v_n,e_n}^I =d_{v_n,n}$.

            Consider the determinants $d_{\theta_n,e_n}^I$, $d_{\theta_n,v_n}^I$ corresponding to $\Gamma_I$ as defined in section \ref{ss:splice}. Similarly $d_{\theta_n,n}$, $d_{\theta_n,v_n}$ for $\Gamma$ with the caveat that in the case $\theta_n = n$ we define $d_{\theta_{n},n}$ to be $\omega_n$. Thus the constants $c_n :=d_{\theta_n,e_n}^I/d_{\theta_n,n}$ and $c_n' :=d_{\theta_n,v_n}^I/d_{\theta_n,v_n}$
            allow us to write that
            \[
                m_{\theta_n}(E_{u,\Gamma_I}^*) =w(\mathfrak{S}_I[u,v_n]) \frac{d_{\theta_n,e_n}^I}{d_{v_n,e_n}^I}= c_nw(\mathfrak{S}[u,v_n])\frac{d_{\theta_n,n}}{d_{v_n,n}} =   c_n m_{\theta_n}(E_{u,\Gamma}^*)
            \]
            for all $u\in \mathcal{N}_{\mathrm{n}}\cup I$, 
            \[
                \omega_u\cdot m_{\theta_n}(E_{e_u,\Gamma_I}^*) = \omega_u w(\mathfrak{S}_I[e_u,v_n]) \frac{d_{\theta_n,e_n}^I}{d_{v_n,e_n}^I}= c_n w(\mathfrak{S}[u,v_n])\frac{d_{\theta_n,n}}{d_{v_n,n}}= c_nm_{\theta_n}(E_{u,\Gamma}^*),
            \]for $u\in\mathcal{N}_{\mathrm{e}}\setminus I$ with $u\neq n$, and
            \[
                \omega_n\cdot m_{\theta_n}(E_{e_n,\Gamma_I}^*) =\omega_n d_{\theta_n,v_n}^I=c_n' \omega_nd_{\theta_n,v_n}= c_n'm_{\theta_n}(E_{n,\Gamma}^*).
            \]

            {\bf Step 2.} Suppose $l\in\Q\langle E_u\mid u\in\mathcal{N} \rangle$ is given by $l = \sum_u a_u(l)\restr{E_{u,\Gamma}^*}{\mathcal{N}}$ and consider the (unrestricted) cycle $l_1 = \sum_u a_u(l)E_{u,\Gamma}^*$. Note that $\restr{l_1}{\mathcal{N}} = l$. We have that
            \begin{align*}
                m_{\theta_n}(\varphi_I^{-1}(l)) &= \sum_{u\in\mathcal{N}_{\mathrm{n}}\cup I}a_u(l)m_{\theta_n}(E_{u,\Gamma_I}^*) + \sum_{\substack{u\in\mathcal{N}_{\mathrm{e}}\setminus I\\u\neq n}}a_u(l)\omega_um_{\theta_n}(E_{e_u,\Gamma_I}^*) + a_n(l)\omega_nm_{\theta_n}(E_{e_n,\Gamma_I}^*)\\
                &=c_n m_{\theta_n}(l_1) + (c_n'-c_n)a_n(l)m_{\theta_n}(E_{n,\Gamma}^*).
            \end{align*}
            Since   $m_{\theta_{n}}(E_{n,\Gamma}^*) = d_{\theta_n,v_n}\omega_n$,  $a_n(l) = \mathcal{B}_{n,n}m_n(l) + \mathcal{B}_{v_n,n}m_{v_n}(l)$ (see (\ref{eq:Orbifold})), and 
            \[
                m_{\theta_n}(l_1) = \frac{\det\Gamma(v_n,\theta_n)}{\det{\Gamma}(v_n,n)}m_n(l_1) +\frac{\det\Gamma(\theta_n,n)}{\det{\Gamma}(v_n,n)}m_{v_n}(l_1),
            \]
            (see (\ref{eq:midBamboo}))
             we have $m_{\theta_n}(\varphi_I^{-1}(l)) = \alpha_nm_{n}(l) + \beta_n m_{v_n}(l)$, where
            \begin{align*}
                \alpha_n &:= c_n \frac{\det\Gamma(v_n,\theta_n)}{\det{\Gamma}(v_n,n)} + (c_n'-c_n)d_{\theta_n,v_n}\omega_n\mathcal{B}_{n,n},\\
                \beta_n &:= c_n\frac{\det\Gamma(\theta_n,n)}{\det{\Gamma}(v_n,n)} + (c_n'-c_n)d_{\theta_n,v_n}\omega_n\mathcal{B}_{v_n,n}.
            \end{align*}

            {\bf Step 3.} Now we prove that $\alpha_n$ and $\beta_n$ depend only on the weights in $\mathfrak{S}$ surrounding $v_n$, $n$. It suffices to show this same dependence for $c_n$, $c_n'$, and $d_{\theta_n,v_n}$, as everything else appearing in the definition of $\alpha_n$, $\beta_n$ are computed from $\Gamma_{v_n,n}$ (see Remark \ref{Rem:varLocality}). Using the graph determinant formula (cf. \cite[Lemma 4.0.1]{BN10}) we can write that
            \begin{align*}
                \det\Gamma(v_n,\theta_n) &= d_{\theta_n,v_n}d_{v_n,n} - w(\mathfrak{S}[v_n,n))d_{\theta_n,n},\\
                \det\Gamma_I(v_n,\theta_n) &= d_{\theta_n,v_n}^Id_{v_n,e_n}^I - w(\mathfrak{S}_I[v_n,n))d_{\theta_n,e_n}^I.
            \end{align*}
            By construction the graphs $\Gamma_I(v_n,\theta_n)$ and $\Gamma(v_n,\theta_n)$ are identical, thus
            \begin{align*}
                d_{\theta_n,v_n} &= \frac{1}{d_{v_n,n}}(\det\Gamma(v_n,\theta_n) + w(\mathfrak{S}[v_n,n))d_{\theta_n,n}),\\
                d_{\theta_n,v_n}^I &= \frac{1}{d_{v_n,n}}(\det\Gamma(v_n,\theta_n) + w(\mathfrak{S}[v_n,n))d_{\theta_n,e_n}^I).
            \end{align*}            
            Therefore $c_n'$, $c_n$, $d_{\theta_n,v_n}$ can be computed from $\Gamma_{v_n,n}$ and $(\Gamma_{I})_{v_n,e_n}$, which are constructed using the weights in $\mathfrak{S}$ that surround the nodes $v_n$, $n$.

            Finally let us consider the case when $\theta_{v_n,n} = \theta_n$. For this part of the proof, as shown above, we may assume that $I =\mathcal{N}_{\mathrm{e}}\setminus\{n\}$. Under these two assumptions the graphs $\Gamma_{\theta_n,v_n}$ and  $(\Gamma_I)_{\theta_n,v_n}$ are the same.  Hence $d_{\theta_n,v_n} = d_{\theta_n,v_n}^I$, meaning that $c_n' = c_n = 1$. 
        \end{proof}
        \begin{Rem}
            In fact, $c_n=c_n'=1$ if the decoration of the node in $\Sigma_{v_n}(d_{v_n,u}\mid u\sim v_n)$ remains the same after the 0-absorption in the gluing with $\Sigma_n(d_{n,u}\mid u\sim n)$ along the edge $v_n\sim n$.
        \end{Rem}
        The expression of $\varphi_I$ is given by
        \begin{equation}\label{eq:changeofvar}
        	\varphi_I(l) = \restr{l}{\mathcal{N}_{\mathrm{n}}\cup I} + \sum_{n\in\mathcal{N}_{\mathrm{e}}\setminus I}\frac{m_{\theta_n}(l)-\beta_n m_{v_n}(l)}{\alpha_n}E_{n,\Gamma}.
        \end{equation}
        Equivalently
        \begin{equation}\label{eq:varChange_basis}
            \varphi_I(E_{\theta_n,\Gamma_I})= \frac{1}{\alpha_n}E_{n,\Gamma},\quad\varphi(E_{u,\Gamma_I}) = E_{u,\Gamma}-\sum_{\substack{u-_b n\\ n\in\mathcal{N}_{\mathrm{e}}\setminus I}}\frac{\beta_n}{\alpha_n}E_{n,\Gamma},\,\forall u\in\mathcal{N}_{\mathrm{n}}\cup I.
        \end{equation} 
        By (\ref{eq:changeofvar}), since $\alpha_n,\beta_n$ depend only on the weights surrounding the nodes $n,v_n$, as long as $n\notin I$, the linear combination determining the $E_n$ coefficient (exactly) from the coefficients $E_{v_n},E_{\theta_n}$ is the same. This will be used later in the proof of Lemma \ref{Lem:PolIndependence}.
       
        \begin{example}\label{example:part:1}
            Let us consider again the splice diagram from the left hand side of Figure \ref{fig:splice1}. The corresponding plumbing graph $\Gamma$ is shown by Figure \ref{fig:plumbing1}. 
            \begin{figure}[h!]
                \centering
                \begin{tikzpicture}[roundnode/.style={circle, draw=black, fill=black, very thick, inner sep = 1},line width=1,x=\graphscale,y=\graphscale]
                	\node[roundnode] (1) {};
                	\path (1)++(3,0) node[roundnode] (2) {};
                	\foreach \i/\j in {1/4,2/5,3/2,4/6,5/3}{
                		\path (1)++(\i,0) node[roundnode] (\j) {};
                	}
                	\path (1)++(120:1) node[roundnode] (8) {};
                	\path (8)++(180:1) node[roundnode] (7) {};
                	
                	\path (1)++(-120:1) node[roundnode] (11) {};
                	\foreach \i/\j in {1/10,2/9}{
                		\path (11)++(180:\i) node[roundnode] (\j) {};
                	}
                	\path (2)++(0,1) node[roundnode] (12) {};
                	\path (3)++(60:1) node[roundnode] (13) {};
                	\path (3)++(-60:1) node[roundnode] (14) {};
                	
                	\draw (7)--(8)--(1)--(11)--(9) (1)--(3) (2)--(12) (13)--(3)--(14);
                	\begin{small}
                		\renewcommand{\r}{0.35}
                		
                		\foreach \w/\i in {2/1}{
                			\path (\i)++(180:\r) node {$-\w$};
                		}
                		\foreach \w/\i in {2/13,3/14,1/3}{
                			\path (\i)++(0:\r) node {$-\w$};
                		}
                		\foreach \w/\i in {2/4,7/5,1/2,11/6,2/11,2/10,2/9}{
                			\path (\i)++(-90:\r) node {$-\w$};
                		}
                		\foreach \w/\i in {2/7,2/8,2/12}{
                			\path (\i)++(90:\r) node {$-\w$};
                		}
                		\path (1)++({\r/tan(60)},\r) node {$E_1$};
                		\path (2)++({\r/tan(60)},\r) node {$E_2$};
                		\path (3)++({\r/tan(120)},\r) node {$E_3$};
                		\path (5)++(0,\r) node {$E_4$};
                		\path (6)++(0,\r) node {$E_5$};
                	\end{small}
                \end{tikzpicture}
                \caption{}
                \label{fig:plumbing1}
            \end{figure}
        The node vertices are $E_1,E_2,E_3$, and in this case $\mathcal{N}_{\mathrm{e}} = \{E_1,E_3\}$ and $\mathcal{N}_{\mathrm{n}} = \{E_2\}$. The vertices obtained from zero absorption are $E_4,E_5$; both are bamboo vertices.

        For constructing $\Gamma_{I}$ with $I = \{E_1\}$, we first remove from the splice diagram of $\Gamma$ the node $\mathcal{N}_{\mathrm{e}}\setminus I = \{E_3\}$
        and all the legs connected to it.
        Then we 
        construct the associated plumbing graph $\Gamma_I$, see  Figure \ref{fig:plumbing2}. Note that the vertex $E_5$ is still present in this plumbing graph, though with a different weight. 
        \begin{figure}[h!]
            \centering
            \begin{tikzpicture}[roundnode/.style={circle, draw=black, fill=black, very thick, inner sep = 1},line width=1,x=\graphscale,y=\graphscale]
        	\node[roundnode] (1) {};
        	\path (1)++(1.5,0) node[roundnode] (2) {};
        	\path (1)++(3,0) node[roundnode] (3) {};
        	\path (1)++(120:1.5) node[roundnode] (10) {};
        	\path (1)++(-120:1.5) node[roundnode] (11) {};
        	\path (2)++(90:1.5) node[roundnode] (20) {};
        	
        	\draw (10)--(1)--(11) (1)--(3) (2)--(20);
        	\begin{small}
        		\renewcommand{\r}{0.5}
        		\path (1)++(20:\r) node {$19$};	
        		\path (1)++(100:\r) node {$3$};	
        		\path (1)++(-100:\r) node {$4$};
        		\path (2)++(20:\r) node {$5$};
        		\path (2)++(160:\r) node {$7$};
        		\path (2)++(70:\r) node {$2$};		
        	\end{small}
        \end{tikzpicture}
        \begin{tikzpicture}[roundnode/.style={circle, draw=black, fill=black, very thick, inner sep = 1},line width=1,x=\graphscale,y=\graphscale]
        	\node[roundnode] (1) {};
        	\path (1)++(3,0) node[roundnode] (2) {};
        	\foreach \i/\j in {1/4,2/5,3/2,4/6}{
        		\path (1)++(\i,0) node[roundnode] (\j) {};
        	}
        	\path (1)++(120:1) node[roundnode] (8) {};
        	\path (8)++(180:1) node[roundnode] (7) {};
        	
        	\path (1)++(-120:1) node[roundnode] (11) {};
        	\foreach \i/\j in {1/10,2/9}{
        		\path (11)++(180:\i) node[roundnode] (\j) {};
        	}
        	\path (2)++(0,1) node[roundnode] (12) {};
        	
        	\draw (7)--(8)--(1)--(11)--(9) (1)--(6) (2)--(12);
        	\begin{small}
        		\renewcommand{\r}{0.35}
        		
        		\foreach \w/\i in {2/1}{
        			\path (\i)++(180:\r) node {$-\w$};
        		}
        		\foreach \w/\i in {2/4,7/5,1/2,5/6,2/11,2/10,2/9}{
        			\path (\i)++(-90:\r) node {$-\w$};
        		}
        		\foreach \w/\i in {2/7,2/8,2/12}{
        			\path (\i)++(90:\r) node {$-\w$};
        		}
        		\path (1)++({\r/tan(60)},\r) node {$E_1$};
        		\path (2)++({\r/tan(60)},\r) node {$E_2$};
        		\path (5)++(0,\r) node {$E_4$};
        		\path (6)++(0,\r) node {$E_5$};
        	\end{small}
        \end{tikzpicture}
            \caption{}
            \label{fig:plumbing2}
        \end{figure}
        
        The variables of the corresponding reduced function $f_{\Gamma_I}$ are $J=\{E_1,E_2,E_5\}$. Moreover, it can be expressed as
        \[
         f_{\Gamma_{I}}(t_1,t_2,t_5) = \frac{1-y_1^{12}}{(1-y_1^3)(1-y_1^4)}\cdot\frac{1-y_2^2}{1-y_2}\cdot\frac{1}{1-y_3},
        \]
        where $y_1 = t_1^{19}t_2^{10}t_5^{2}$, $y_2 = t_1^{60}t_2^{35}t_5^{7}$, and $y_3 = t_1^{24}t_2^{14}t_5^{3}$. To obtain the change of variable map, observe that $11m_5(l) = m_2(l)+m_{3}(l)$ for all $l$ with $(l,E_5)_{\Gamma}=0$. Thus $\alpha_3 =\beta_3 = 1/11$. 
        By (\ref{eq:varChange_basis}) the change of variable map $\varphi_I\colon\Q\langle E_1,E_2,E_5\rangle\to\Q\langle E_1,E_2,E_3\rangle$ is given by
        \[
        \varphi_I(E_1) = E_1,\quad \varphi_I(E_2) = E_2-\frac{\beta_3}{\alpha_3}E_3,\quad \varphi_I(E_5) = \frac{1}{\alpha_3}E_3.
        \]
        The corresponding formal variable change is: $t_1\mapsto t_1$, $t_2\mapsto t_2t_3^{-1}$, $t_5\mapsto t_3^{11}$. 
        Note that
        \begin{align*}
            \varphi_I^*(y_1) = t_1^{19}t_2^{10}t_3^{2\cdot 11 - 10} = t_1^{19}t_2^{10}t_3^{12} &=: x_1,\quad
            \varphi_I^*(y_2) = t_1^{60}t_2^{35}t_3^{7\cdot 11 - 35} = t_1^{60}t_2^{35}t_3^{42} =: x_2,\\
            \varphi_I^*(y_3)&= t_1^{24}t_2^{14}t_3^{3\cdot 11 - 14} = t_1^{24}t_2^{14}t_3^{19} =: x_3.
        \end{align*}
        Finally, we get 
        \[
            (\varphi_{\{E_1\}}^*f_{\Gamma_{\{E_1\}}})(t_1,t_2,t_3)= \frac{1-x_1^{12}}{(1-x_1^3)(1-x_1^4)}\cdot\frac{1-x_2^2}{1-x_2}\cdot\frac{1}{1-x_3}.
        \]
        \end{example}
        \begin{example}
            Let us consider the plumbing graph in Figure \ref{fig:splice_0_absorption_on_node} (the graph on the right). Denote by $E_1$ the left node, and with $E_2$ the right node.  First, consider $\varphi_{\{E_1\}}$. In this case $\theta_2$ is adjacent to $E_1$ in $\Gamma_{\{E_1\}}$, and 
    		  \[
    		X_1 = 11E_1+20E_2,\quad X_2=6E_1+11E_2,\quad Y_1=11E_1+9E_{\theta_2},\quad Y_2=6E_1+5E_{\theta_2}.
    		\]
    		Thus $\varphi_{\{E_1\}}(E_1) = E_1+E_2$ and $\varphi_{\{E_1\}}(E_{\theta_2}) = E_2$.
    		For $\varphi_{\{E_2\}}$, the vertex $\theta_1$ is adjacent to $E_2$ in $\Gamma_{\{E_2\}}$ and $Y_1 = 11E_{\theta_1}+20E_2$, $Y_2 =6E_{\theta_1}+11E_2 $, thus $\varphi_{\{E_2\}}(E_{\theta_1})=E_1$ and $\varphi_{\{E_2\}}(E_{2})=E_2$.
        \end{example}

        \subsection{The surgery formula}
    
            For convenience, in the case when $\mathcal{N}_{\mathrm{n}} = \emptyset$ (i.e. $|\mathcal{N}|=2$) and for the choice $I=\emptyset\subset \mathcal{N}_{\mathrm{e}}$ we set  
            \[
                (\varphi_{\emptyset}^*f_{\Gamma_{\emptyset}})(\mathbf{t}_{\mathcal{N}}) := \prod_{n\in\mathcal{N}_{\mathrm{e}}}\frac{1}{1-x_n},        
            \] 
            where $x_n=\mathbf{t}_{\mathcal{N}}^{X_n}$ as before.

            Then we get the following inclusion-exclusion-type surgery formula for the reduced functions in variables $\mathbf{t}_{\mathcal{N}}$.
       
        \begin{Thm}\label{Thm:zetaSurgery:ZHS}
        	\begin{equation}\label{eq:surgfunc}
        		\sum_{I\subseteq \mathcal{N}_{\mathrm{e}}}(-1)^{|I|}(\varphi_I^*f_{\Gamma_I})(\mathbf{t}_{\mathcal{N}}) = \prod_{n\in\mathcal{N}}P_n(x_n).
        	\end{equation}
        \end{Thm}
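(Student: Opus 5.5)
The plan is to reduce everything to an algebraic identity in the variables $x_n=\mathbf{t}_{\mathcal{N}}^{X_n}$, using (\ref{eq:reducedzeta_xn}). By Lemma \ref{lem:induced_expr} and the definition of $\varphi_I$ (namely $\varphi_I(Y_n)=X_n$, which is legitimate by Lemma \ref{lem:XnYnBasis}), each term $(\varphi_I^*f_{\Gamma_I})(\mathbf{t}_{\mathcal{N}})$ is a product over all nodes $n\in\mathcal{N}$ of a one-variable factor in $x_n$. The factor depends only on whether $n\in\mathcal{N}_{\mathrm{n}}$, $n\in I$, or $n\in\mathcal{N}_{\mathrm{e}}\setminus I$. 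We first check which factor appears in each case, using the weights of the peeled diagram $\mathfrak{S}_I$:

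For $n\in\mathcal{N}_{\mathrm{n}}$ the factor is $\Delta_n(x_n)=P_n(x_n)$ from (\ref{eq:Pn_NodeNode}). Here we must note that in $\mathfrak{S}_I$ the node $n$ keeps its valency $\delta_n$ and the weights $\omega_n$, $\omega_e$ of its original legs. A deleted neighbouring end-node is replaced by a leg $e_u$, which is not among the legs $n-_be$ with $e\in\mathcal{E}$ in the product.

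For $n\in I$ the factor is $(1-x_n^{\omega_n})^{\delta_n-2}/\prod_{n\sim e}(1-x_n^{\omega_e})=\Delta_n(x_n)/(1-x_n)$, by (\ref{eq:AlexanderSingleNode}) in the end-node case.

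For $n\in\mathcal{N}_{\mathrm{e}}\setminus I$ the factor is $1/(1-x_n)$.

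One point needs care. When $v_n$ is itself an end-node of $\mathfrak{S}$, i.e. $|\mathcal{N}|=2$, peeling $n$ may turn $v_n$ into a node whose status changes. We would handle this by checking directly that Lemma \ref{lem:induced_expr} still gives the stated factor, with the convention for $I=\emptyset$ fixed just before the theorem covering the degenerate case.

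With this, the left hand side of (\ref{eq:surgfunc}) factors as
\[
\prod_{n\in\mathcal{N}_{\mathrm{n}}}P_n(x_n)\cdot\sum_{I\subseteq\mathcal{N}_{\mathrm{e}}}(-1)^{|I|}\prod_{n\in I}\frac{\Delta_n(x_n)}{1-x_n}\prod_{n\in\mathcal{N}_{\mathrm{e}}\setminus I}\frac{1}{1-x_n}
=\prod_{n\in\mathcal{N}_{\mathrm{n}}}P_n(x_n)\prod_{n\in\mathcal{N}_{\mathrm{e}}}\Big(\frac{1}{1-x_n}-\frac{\Delta_n(x_n)}{1-x_n}\Big).
\]
The equality is the binomial expansion of a product of $|\mathcal{N}_{\mathrm{e}}|$ two-term factors. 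By (\ref{eq:Pn_EndNode}) each end-node factor equals $P_n(x_n)$, which gives the right hand side.

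The main obstacle is the first step: verifying rigorously that after the change of variables the factors attached to the gluing leg $e_n$, and to the node $v_n$ of $\Gamma_I$, coincide with the claimed expressions in $x_n$ and $x_{v_n}$. This means confirming that $\mathbf{t}_{J_I}^{E^*_{e_n,\Gamma_I}}$ is exactly $y_n$, and that $v_n$'s valency and leg weights in $\mathfrak{S}_I$ agree with those in $\mathfrak{S}$, including the non-minimal situation where a new leg has weight $d_{v_n,n}$. Both facts are essentially built into Lemma \ref{lem:induced_expr}, (\ref{eq:reduced_duals}) and the definition of $\varphi_I$. So we expect the proof to be short, and the care to be needed mainly in the bookkeeping for the $|\mathcal{N}|=2$ case.
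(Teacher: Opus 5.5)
Your proposal is correct and follows the paper's proof. The paper likewise reads off from (\ref{eq:reducedzeta_xn}) that each term factors as $\varphi_I^*f_{\Gamma_I}=\prod_{n\in\mathcal{N}_{\mathrm{n}}}P_n(x_n)\prod_{n\in I}\bigl(\frac{1}{1-x_n}-P_n(x_n)\bigr)\prod_{n\in\mathcal{N}_{\mathrm{e}}\setminus I}\frac{1}{1-x_n}$, and then applies the same binomial expansion over $I\subseteq\mathcal{N}_{\mathrm{e}}$. The bookkeeping you flag as the main obstacle is already handled by Lemma \ref{lem:induced_expr} together with the definition $\varphi_I(Y_n)=X_n$, and the $|\mathcal{N}|=2$, $I=\emptyset$ case is covered by the stated convention.
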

        \begin{proof} If $A_n, \, B_n$ ($n\in  \mathcal{N}_{\mathrm{e}}$) are formal variables then $\prod_{n\in\mathcal{N}_{\mathrm{e}}} (A_n+B_n)=\sum 
        _{I\subseteq \mathcal{N}_{\mathrm{e}}}\prod_{n\in I}A_n\prod_{n\not\in I}B_n$. Hence, for formal variables
        	  $V_n,W_n$ ($n\in \mathcal{N}_{\mathrm{e}}$) and $Z$ we get
        	\[
        		Z\sum_{I\subseteq \mathcal{N}_{\mathrm{e}}}(-1)^{|I|}\prod_{n\in I}(V_n-W_n)\prod_{n\in \mathcal{N}_{\mathrm{e}}\setminus I}V_n = Z\prod_{n\in\mathcal{N}_{\mathrm{e}}}W_n.
        	\]
        	 Then, substitute $Z := \prod_{n\in\mathcal{N}_{\mathrm{n}}}P_n(x_n)$, $V_n = 1/(1-x_n)$, $W_n = P_n(x_n)$, and use the expression
        	\[
        		(\varphi_I^*f_{\Gamma_I})(\mathbf{t}_{\mathcal{N}}) = \prod_{n\in\mathcal{N}_{\mathrm{n}}}P_n(x_n)\prod_{n\in I}\left(\frac{1}{1-x_n}-P_n(x_n)\right)\prod_{n\in \mathcal{N}_{\mathrm{e}}\setminus I}\frac{1}{1-x_n}, 
        	\]
           which is obtained from (\ref{eq:reducedzeta_xn}), (\ref{eq:Pn_EndNode}), (\ref{eq:Pn_NodeNode}).
        \end{proof}
        \begin{Cor}
            In particular, (\ref{eq:surgfunc}) implies the following recursion type expression:
            $$f_\Gamma(\mathbf{t}_\mathcal{N})=\sum_{I\subsetneq \mathcal{N}_{\mathrm{e}}}(-1)^{|\mathcal{N}_{\mathrm{e}}\setminus I|+1}(\varphi_I^*f_{\Gamma_I})(\mathbf{t}_{\mathcal{N}})+ (-1)^{|\mathcal{N}_{\mathrm{e}}|}\prod_{n\in\mathcal{N}}P_n(x_n).$$
        \end{Cor}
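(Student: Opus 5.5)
The corollary is obtained by isolating the $I=\mathcal{N}_{\mathrm{e}}$ term in the identity (\ref{eq:surgfunc}) of Theorem \ref{Thm:zetaSurgery:ZHS}. By Lemma \ref{lem:varRestr}, $\varphi^*_{\mathcal{N}_{\mathrm{e}}}$ is the identity operator. Moreover $\Gamma_{\mathcal{N}_{\mathrm{e}}}=\Gamma$ and $J_{\mathcal{N}_{\mathrm{e}}}=\mathcal{N}$, so this term equals $(-1)^{|\mathcal{N}_{\mathrm{e}}|}f_\Gamma(\mathbf{t}_{\mathcal{N}})$. This also agrees with Example \ref{rem:2.3} and with (\ref{eq:reducedzeta_xn}) for $I=\mathcal{N}_{\mathrm{e}}$.

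Next I move every other summand to the right-hand side and multiply by $(-1)^{|\mathcal{N}_{\mathrm{e}}|}$. This gives
\[
f_\Gamma(\mathbf{t}_{\mathcal{N}})=(-1)^{|\mathcal{N}_{\mathrm{e}}|}\prod_{n\in\mathcal{N}}P_n(x_n)-\sum_{I\subsetneq\mathcal{N}_{\mathrm{e}}}(-1)^{|\mathcal{N}_{\mathrm{e}}|+|I|}(\varphi_I^*f_{\Gamma_I})(\mathbf{t}_{\mathcal{N}}).
\]
Since $|\mathcal{N}_{\mathrm{e}}|+|I|\equiv|\mathcal{N}_{\mathrm{e}}\setminus I|\pmod 2$, the coefficient $-(-1)^{|\mathcal{N}_{\mathrm{e}}|+|I|}$ is exactly $(-1)^{|\mathcal{N}_{\mathrm{e}}\setminus I|+1}$, which is the claimed formula.

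One remaining point needs checking. When $|\mathcal{N}|=2$, the term $I=\emptyset$ is not a genuine peeling; it is the conventional function defined just before the theorem. That convention was exactly the one used in the proof of the theorem, so the sum over $I\subsetneq\mathcal{N}_{\mathrm{e}}$ is meant with the same convention. There is no real obstacle here; the only care needed is the sign bookkeeping.
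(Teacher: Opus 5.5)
Your proposal is correct and is the same argument the paper intends. The paper's one-line justification is exactly this: isolate the $I=\mathcal{N}_{\mathrm{e}}$ term of (\ref{eq:surgfunc}), which is $(-1)^{|\mathcal{N}_{\mathrm{e}}|}f_\Gamma(\mathbf{t}_{\mathcal{N}})$ since $\Gamma_{\mathcal{N}_{\mathrm{e}}}=\Gamma$ and $\varphi^*_{\mathcal{N}_{\mathrm{e}}}$ is the identity, then multiply through by $(-1)^{|\mathcal{N}_{\mathrm{e}}|}$. Your sign bookkeeping and your remark on the $I=\emptyset$ convention when $|\mathcal{N}|=2$ are both right.
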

        \begin{example}
        Let $\Gamma$ be the graph in Figure \ref{fig:plumbing1}. Using the calculations of Example \ref{example:part:1}, it remains to consider the sets $I=\{E_3\}$ and $I=\emptyset$.  For $I = \{E_3\}$ the splice diagram and the corresponding plumbing tree can be seen in Figure \ref{fig:plumbing3}.
        \begin{figure}[h!]
            \centering
            \begin{tikzpicture}[roundnode/.style={circle, draw=black, fill=black, very thick, inner sep = 1},line width=1,x=\graphscale,y=\graphscale]
        	\node[roundnode] (1) {};
        	\path (1)++(1.5,0) node[roundnode] (2) {};
        	\path (1)++(3,0) node[roundnode] (3) {};
        	
        	\path (1)++(120:1.5) node (10) {};
        	\path (1)++(-120:1.5) node (11) {};
        	\path (2)++(90:1.5) node[roundnode] (20) {};
        	\path (3)++(60:1.5) node[roundnode] (30) {};
        	\path (3)++(-60:1.5) node[roundnode] (31) {};
        	
        	\draw  (1)--(3) (2)--(20) (30)--(3)--(31);
        	\begin{small}
        		\renewcommand{\r}{0.5}
        		\path (2)++(20:\r) node {$5$};
        		\path (2)++(160:\r) node {$7$};
        		\path (2)++(70:\r) node {$2$};	
        		\path (3)++(80:\r) node {$2$};
        		\path (3)++(-80:\r) node {$3$};
        		\path (3)++(160:\r) node {$19$};	
        	\end{small}	
        \end{tikzpicture}
        \begin{tikzpicture}[roundnode/.style={circle, draw=black, fill=black, very thick, inner sep = 1},line width=1,x=\graphscale,y=\graphscale]
        	\node(1) {};
        	\path (1)++(3,0) node[roundnode] (2) {};
        	\foreach \i/\j in {1/4,2/5,3/2,4/6,5/3}{
        		\path (1)++(\i,0) node[roundnode] (\j) {};
        	}
        	\path (1)++(120:1) node (8) {};
        	\path (8)++(180:1) node (7) {};
        	
        	\path (1)++(-120:1) node (11) {};
        	\foreach \i/\j in {1/10,2/9}{
        		\path (11)++(180:\i) node (\j) {};
        	}
        	\path (2)++(0,1) node[roundnode] (12) {};
        	\path (3)++(60:1) node[roundnode] (13) {};
        	\path (3)++(-60:1) node[roundnode] (14) {};
        	
        	\draw (4)--(3) (2)--(12) (13)--(3)--(14);
        	\begin{small}
        		\renewcommand{\r}{0.3}
        		\foreach \w/\i in {2/13,3/14,1/3}{
        			\path (\i)++(0:\r) node {$-\w$};
        		}
        		\foreach \w/\i in {2/4,4/5,1/2,11/6}{
        			\path (\i)++(-90:\r) node {$-\w$};
        		}
        		\foreach \w/\i in {2/12}{
        			\path (\i)++(90:\r) node {$-\w$};
        		}
        		\path (2)++({\r/tan(60)},\r) node {$E_2$};
        		\path (3)++({\r/tan(120)},\r) node {$E_3$};
        		\path (5)++(0,\r) node {$E_4$};
        		\path (6)++(0,\r) node {$E_5$};
        	\end{small}
        \end{tikzpicture}
            \caption{}
            \label{fig:plumbing3}
        \end{figure}
        By the formal variable change $t_4\mapsto t_1^{13}$, $t_2\mapsto t_2t_1^{-2}$, $t_3\mapsto t_3$ yields
        \[
        (\varphi_{\{E_3\}}^*f_{\Gamma_{\{E_3\}}})(t_1,t_2,t_3) = \frac{1}{1-x_1}\cdot\frac{1-x_2^2}{1-x_2}\cdot \frac{1-x_3^6}{(1-x_3^2)(1-x_3^3)}.
        \]
        For $I = \emptyset$ (see Figure \ref{fig:plumbing4}) 
        \begin{figure}[h!]
            \centering
            \begin{tikzpicture}[roundnode/.style={circle, draw=black, fill=black, very thick, inner sep = 1},line width=1,x=\graphscale,y=\graphscale]
        	\node[roundnode] (1) {};
        	\path (1)++(1.5,0) node[roundnode] (2) {};
        	\path (1)++(3,0) node[roundnode] (3) {};
        	\path (1)++(120:1.5) node (10) {};
        	\path (1)++(0,-0.85) node (11) {};
        	\path (2)++(90:1.5) node[roundnode] (20) {};
        	\path (3)++(60:1.5) node (30) {};
        	\draw  (1)--(3) (2)--(20);
        	\begin{small}
        		\renewcommand{\r}{0.5}
        		\path (2)++(20:\r) node {$5$};
        		\path (2)++(160:\r) node {$7$};
        		\path (2)++(70:\r) node {$2$};		
        	\end{small}
        \end{tikzpicture}
        \begin{tikzpicture}[roundnode/.style={circle, draw=black, fill=black, very thick, inner sep = 1},line width=1,x=\graphscale,y=\graphscale]
        	\node(1) {};
        	\path (1)++(3,0) node[roundnode] (2) {};
        	\foreach \i/\j in {1/4,2/5,3/2,4/6}{
        		\path (1)++(\i,0) node[roundnode] (\j) {};
        	}
        	\path (1)++(5,0) node (3) {};
        	\path (1)++(120:1) node (8) {};
        	\path (8)++(180:1) node (7) {};
        	
        	\path (1)++(-120:1) node (11) {};
        	\foreach \i/\j in {1/10,2/9}{
        		\path (11)++(180:\i) node (\j) {};
        	}
        	\path (2)++(0,1) node[roundnode] (12) {};
        	\path (3)++(60:1) node (13) {};
        	\path (3)++(-60:1) node (14) {};
        	
        	\draw (4)--(6) (2)--(12);
        	\begin{small}
        		\renewcommand{\r}{0.3}
        		\foreach \w/\i in {2/4,4/5,1/2,5/6}{
        			\path (\i)++(-90:\r) node {$-\w$};
        		}
        		\foreach \w/\i in {2/12}{
        			\path (\i)++(90:\r) node {$-\w$};
        		}
        		\path (2)++({\r/tan(60)},\r) node {$E_2$};
        		\path (5)++(0,\r) node {$E_4$};
        		\path (6)++(0,\r) node {$E_5$};
        	\end{small}
        \end{tikzpicture}
            \caption{}
            \label{fig:plumbing4}
        \end{figure}
        the formal variable change is given by $t_4\mapsto t_1^{13}$, $t_2\mapsto t_2t_1^{-2}t_3^{-1}$, $t_5\mapsto t_3^{11}$, which gives
        \[
        (\varphi_{\emptyset}^*f_{\Gamma_{\emptyset}})(t_1,t_2,t_3) = \frac{1}{1-x_1}\cdot\frac{1-x_2^2}{1-x_2}\cdot \frac{1}{1-x_3}.
        \]
        Combining all parts we have
        \[
            f_{\Gamma} -  (\varphi_{\{E_1\}}^*f_{\Gamma_{\{E_1\}}})- (\varphi_{\{E_3\}}^*f_{\Gamma_{\{E_3\}}})+ (\varphi_{\emptyset}^*f_{\Gamma_{\emptyset}}) = (x_1+x_1^2+x_1^5)\cdot(1+x_2)\cdot x_3 = P_1(x_1)P_2(x_2)P_3(x_3) 
        \]
        where $\varphi_{\{E_1,E_2,E_3\}}^*f_{\{E_1,E_2,E_3\}} = f_{\Gamma}$ and $P_{i}(t)$ ($i=1,2,3$) are the polynomials given in Example \ref{example:part:0}.
         \end{example}
    \section{Surgery formula for the polynomial part}

    \subsection{Polynomial part of the topological Poincar\'e series \cite{LNN19}}\label{ss:polpart}

    For the general definition of the polynomial part of a multivariable series (with certain regularity properties) see 
     \cite{LNN19,LNN20,LN14Erhart,LSz17}. Here, for our present situation
     (the 3-manifold is an integral homology sphere and the series is the Poincar\'e series)
    we adopt a simpler definition.
    
    For any two cycles $l_1, l_2\in L(\Gamma)$ we define $l_1\succ l_2$ if 
    $m_v(l_1)>m_v(l_2)$ for every $v$. 
    
        Let us consider first the truncation of  $Z(\mathbf{t})$ by considering only the monomials with exponents `no larger' than $Z_K-E$. More precisely, let $\operatorname{Supp}'(\Gamma)$ be the set of exponents $l$ of the form
        \[
            l=\sum_{e\in\mathcal{E}}a_eE_e^* + \sum_{v\in\mathcal{V}\setminus\mathcal{E}}a_vE_v^*\nsucc Z_K-E,
        \]
        where $0\le a_e$ for $e\in\mathcal{E}$ and $0\le a_v\le \delta_v-2$ if $v\in\mathcal{V}\setminus\mathcal{E}$, cf. \ref{ss:2.3}.  Note that $\Supp'(\Gamma)$ is a finite subset of the Lipman cone $\mathcal{S}=\Z_{\ge0}\langle E_v^*\rangle$. 
        
        \begin{Def}
            The polynomial part of the topological Poincar\'e series $Z(\mathbf{t})=\sum_{l}z(l)\mathbf{t}^l$ is defined as
            \[
                 \operatorname{Pol}_{\Gamma}(\mathbf{t}) := \mathbf{t}^{Z_K-E}\cdot \sum_{l\in\operatorname{Supp}'(\Gamma)} z(l)\mathbf{t}^{-l}.
            \]
        \end{Def}
        \noindent
        Note that the monomials of $\operatorname{Pol}_{\Gamma}(\mathbf{t})$
        are exactly those monomials of $ \mathbf{t}^{Z_K-E}\cdot  Z(\mathbf{t}^{-1})=
        \mathbf{t}^{Z_K-E}\cdot  Z(t_1^{-1}, \ldots, t_{|\mathcal{V}|}^{-1})$ which are
        $\nprec 0$.

        Similarly to Section \ref{sss:redseries}, we can define the reduced polynomial part for any $I\subseteq\mathcal{V}$ as $\operatorname{Pol}_{\Gamma}(\mathbf{t}_I):=\operatorname{Pol}_{\Gamma}(\mathbf{t})_{t_v\mapsto 1,v\notin I}$.

        \subsection{Polynomial part of formal Laurent series}
        Let $\mathcal{W}$ be a finite index set, 
        and consider  multi-variable formal Laurent series $S(\mathbf{t}_{\mathcal{W}})=\sum_{l}s(l)\mathbf{t}_{\mathcal{W}}^l\in\Z[[\mathbf{t}_{\mathcal{W}}^{-1}]][\mathbf{t}_{\mathcal{W}}]$, where $\mathbf{t}_{\mathcal{W}}^l=\prod_{w\in \mathcal{W}} t_w^{l_w}$ and $l=\sum_{w\in \mathcal{W}}l_wE_w\in \mathbb{Z}\langle E_w\mid w\in \mathcal{W}\rangle$
        is an element of the lattice associated with $\mathcal{W}$.
        For any  non-empty  $J\subseteq \mathcal{W}$,
        the \emph{polynomial part} of $S(\mathbf{t}_\mathcal{W})$ with respect to the variables $\mathbf{t}_{J}$ is defined as
        \[   \operatorname{PolPart}_J(S(\mathbf{t}_\mathcal{W}))=
            (\operatorname{PolPart}_JS)(\mathbf{t}_\mathcal{W})
            := \sum_{\restr{l}{J}\nprec0}s(l)\mathbf{t}_\mathcal{W}^l.
        \]
         In the case $J = \mathcal{W}$, for  simplicity we will omit the big index set from the notations and write e.g. $\operatorname{PolPart}S:=\operatorname{PolPart}_{\mathcal{W}}S$.
        
        We emphasize that the operator  $\operatorname{PolPart}_J$
        preserves the variables of $S$. However, 
        similarly to \ref{ss:polpart}, this  polynomial part  
        $\operatorname{PolPart}_JS(\mathbf{t}_{\mathcal{W}})$
        can be reduced to the variables $\mathbf{t}_{J'}$ for any $J'\subseteq
        \mathcal{W}$, if
         the  finiteness property with respect to $J'$ is satisfied, that is,   whenever
         the next set
         is finite
        \begin{equation}\label{eq:finiteprop}
        \{l \mid s(l)\neq 0 \ \mbox{and} \ \restr{l}{J'} \ \mbox{is fixed}\}.
        \end{equation}
        In this case we define  
        \[
            (\operatorname{PolPart}_JS)|_{J'}(\mathbf{t}_{J'}):=\big(\operatorname{PolPart}_{J}(S(\mathbf{t}_{\mathcal{W}}))\big)_{t_i\mapsto1,i\notin J'}.
        \]
       
        \begin{Rem}\label{Rem:IterPolPart}
            If $\emptyset\neq J_1\subseteq J_2\subseteq \mathcal{W}$ and $S$ has the finiteness property with respect to both $J_1$ and $J_2$, then 
            the `restriction operator' does not commute with the `polynomial part' operator:
            the restriction to $\mathbf{t}_{J_1}$ of 
            $\operatorname{PolPart}_{J_2}S$ usually does not equal to the  $\operatorname{PolPart}_{J_1}$
            of the restriction of $S$ to $\mathbf{t}_{J_1}$, that is, 
            $(\operatorname{PolPart}_{J_2}S)|_{J_1} \not= \operatorname{PolPart}_{J_1}(S|_{J_1})=(\operatorname{PolPart}_{J_1}S)|_{J_1}$. 

            On the other hand, 
            $\operatorname{PolPart}_{J_1}\big((\operatorname{PolPart}_{J_2}S)|_{J_1})\big)=\operatorname{PolPart}_{J_1}(S|_{J_1}) $. 
            (This will be relevant in the proof of Theorem \ref{Thm:oRedPolSurgery}.)
        \end{Rem}
        Now, we consider the setting of the present article and let $\Gamma$ be a negative definite tree with $\det\Gamma=1$ and $\mathcal{V}$ its set of vertices. Consider the associated rational function $f$ defined in (\ref{eq:zeta:def}). 
        Then 
         $f$ can be considered as an element in $\Z[[\mathbf{t}^{-1}]][\mathbf{t}]$
         (where $\mathbf{t}=\mathbf{t}_{\mathcal{V}}$)   
         in two different ways. Either we write as $\mathbf{t}^{Z_K-E}\cdot Z(\mathbf{t}^{-1})$, or, we expand the factors of type $1/(1-\mathbf {t}^a)$ of $f(\mathbf {t}) $  as 
        \[
            \frac{1}{1-\mathbf{t}^a} = -\frac{1}{\mathbf{t}^a}-\frac{1}{\mathbf{t}^{2a}}-\frac{1}{\mathbf{t}^{3a}}-\ldots \in \Z[[\mathbf{t}^{-1}]][\mathbf{t}] \ \ 
           \  ( a\succ0).
        \]
        Since the finiteness property (\ref{eq:finiteprop}) is satisfied,  $(\operatorname{PolPart}_Jf)(\mathbf{t}) $ is defined for any $J\subseteq \mathcal{V}$. 
        Note  that one has  
        $(\operatorname{PolPart}f)(\mathbf {t})=\operatorname{Pol}_{\Gamma}(\mathbf{t})$, where $\operatorname{Pol}_{\Gamma}(\mathbf{t})$ is the polynomial part of the topological Poincar\'e series introduced in \ref{ss:polpart}. This induces the same identity of reduced polynomials
        \begin{equation}\label{eq:pol1}
        \operatorname{Pol}_{\Gamma}(\mathbf{t}_J)=(\operatorname{PolPart}f)|_{J}(\mathbf{t}_{J})
        \end{equation}
        (where $\operatorname{Pol}_{\Gamma}(\mathbf{t}_{J})$ is defined in \ref{ss:polpart}). On the other hand, by definition, we also get 
        \begin{equation}\label{eq:pol2}
        (\operatorname{PolPart}_{J}f)|_{J}(\mathbf{t}_{J}) = (\operatorname{PolPart}_J(f|_{J}))(\mathbf{t}_J).
        \end{equation}
        Despite the general non-identity observed in Remark \ref{Rem:IterPolPart}, we still have the following:
        \begin{Lem}\label{lem:polpart}
            If we further assume that  $\mathcal{N}\subseteq J$ then we have $\operatorname{PolPart}f = \operatorname{PolPart}_{J}f$.
        \end{Lem}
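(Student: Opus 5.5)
The plan is to compare the two operators monomial by monomial. Since $J\subseteq\mathcal{V}$, whenever $\restr{l}{J}\nprec 0$ we also have $l\nprec 0$. Hence $\operatorname{PolPart}_Jf$ consists of some of the monomials of $\operatorname{PolPart}f$, and it suffices to prove the converse: for every exponent $l$ with nonzero coefficient in the expansion $f=\mathbf{t}^{Z_K-E}Z(\mathbf{t}^{-1})$, if $l\nprec 0$ then $\restr{l}{J}\nprec 0$. Since $\mathcal{N}\subseteq J$, it is enough to show the contrapositive: if $m_n(l)<0$ for all $n\in\mathcal{N}$, then $m_v(l)<0$ for every $v\in\mathcal{V}$.

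\textbf{Step 1: the $a$-coordinates of the exponents.} Write the exponents as $l=Z_K-E-s$ with $s=\sum_{e\in\mathcal{E}}a_eE_e^*+\sum_{v\notin\mathcal{E}}a_vE_v^*$, where $a_e\ge0$ and $0\le a_v\le\delta_v-2$ (cf. \ref{ss:2.3}). Since $Z_K-E=\sum_v(\delta_v-2)E_v^*$, in the coordinates of (\ref{eq:coords}) we get:
\begin{itemize}
\item for a bamboo vertex $w$ ($\delta_w=2$), $a_w(l)=0-0=0$;
\item for an end $e$, $a_e(l)=-1-a_e\le -1<0$.
\end{itemize}
The node coordinates $a_n(l)$ may be positive, so all control at the nodes has to come from the hypothesis $m_n(l)<0$.

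\textbf{Step 2: bamboos between two nodes.} Take nodes $u,v$ with $u-_bv$ and a vertex $\theta\in\Gamma(u,v)$. Since $a_w(l)=0$ on $\Gamma(u,v)$, formula (\ref{eq:midBamboo}) applies. It expresses $m_\theta(l)\det\Gamma(u,v)$ as a combination of $m_u(l)$ and $m_v(l)$ with coefficients $\det\Gamma(\theta,v)$ and $\det\Gamma(u,\theta)$. These determinants are positive, or $0$ when the sub-bamboo is empty, and they are not both zero. Hence $m_\theta(l)<0$.

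\textbf{Step 3: legs.} Take a node $n$ with $n-_be$ for an end $e$, and let $\Gamma_{n,e}$ be the leg, with $n'$ the neighbour of $n$ on it. The plan is to restrict $l$ to $\Gamma_{n,e}$ and use the intersection form of the leg. For $w\in\Gamma_{n,e}$ one gets $(\restr{l}{\Gamma_{n,e}},E_w)_{\Gamma_{n,e}}=-a_w(l)-m_n(l)\delta_{w,n'}$. This yields
\[
\restr{l}{\Gamma_{n,e}}=\sum_{w\in\Gamma_{n,e}}a_w(l)E^*_{w,\Gamma_{n,e}}+m_n(l)E^*_{n',\Gamma_{n,e}}=a_e(l)E^*_{e,\Gamma_{n,e}}+m_n(l)E^*_{n',\Gamma_{n,e}}.
\]
The leg $\Gamma_{n,e}$ is negative definite, so its dual cycles have strictly positive coefficients. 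Since $a_e(l)<0$ and $m_n(l)<0$, every coefficient of $l$ on the leg is negative.

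\textbf{Conclusion and degenerate case.} Every vertex is a node, lies on a bamboo between two nodes, or lies on a leg, so Steps 2 and 3 give $l\prec 0$. If $\mathcal{N}=\emptyset$, then $\Gamma$ is a bamboo. By Step 1 we have $l=\sum_w a_w(l)E^*_w$ with all $a_w(l)\le0$ and at least one $a_e(l)<0$, so $l\prec0$ always and both polynomial parts vanish.

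The main obstacle I expect is the bookkeeping in Step 3: getting the signs and the boundary term at $n'$ right. A second point to check is the case where a bamboo or leg is empty, i.e. an edge joins two nodes or a node to an end. In that case there is nothing to prove in Step 2, and in Step 3 the leg is just $\{e\}$ with $n'=e$.
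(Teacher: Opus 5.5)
Your proof is correct. It uses the same mechanism as the paper's, positivity of dual cycles on the negative definite subgraphs left after removing the vertices of $J$, but organizes it differently.

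The paper deduces the lemma from the more general Lemma~\ref{Lem:singDistribution}. That lemma covers an arbitrary partition $J\sqcup J^*=\mathcal{V}$ and any $l$ whose $E^*$-coordinates on $J^*$ are $\le 0$. It shows globally that $(l|_{J^*},E_u)\ge 0$ for all $u\in J^*$, with strict inequality at the neighbours of $J$, and then concludes $l|_{J^*}\prec 0$ by negative definiteness.

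You instead shrink $J$ to $\mathcal{N}$ and handle each component of $\mathcal{V}\setminus\mathcal{N}$ separately with explicit formulas. On bamboos between nodes you use (\ref{eq:midBamboo}), and on legs you use the dual basis of the leg. This is the paper's inequality written out component by component, and it is more concrete.

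The paper's formulation pays off in reuse. Lemma~\ref{Lem:singDistribution} is applied again in Lemma~\ref{lem:varchangeSign} to a cycle on $\Gamma$ with $J=\mathcal{N}_{\mathrm{n}}\cup I$. There the nonpositive coordinates also sit at the nodes $n\in\mathcal{N}_{\mathrm{e}}\setminus I$. Your argument assumes negativity at every node, so it would have to be redone in that setting.

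One small correction to your Step 2. For $\theta\in\Gamma(u,v)$, an empty sub-bamboo $\Gamma(\theta,v)$ or $\Gamma(u,\theta)$ has determinant $1$, not $0$. The convention $\det\Gamma(w,w):=0$ only matters when $\theta\in\{u,v\}$. For example, for $u-\theta-v$ with $E_\theta^2=-b$ one gets $b\,m_\theta(l)=m_u(l)+m_v(l)$. Taken literally, your claim would make both coefficients vanish in this case, contradicting your own "not both zero". In fact both coefficients are strictly positive, so your conclusion $m_\theta(l)<0$ stands.
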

        In particular, this, together with (\ref{eq:pol1}) and (\ref{eq:pol2}), implies that
        \begin{equation}\label{eq:polred}
            \operatorname{Pol}_{\Gamma}(\mathbf{t}_J) = (\operatorname{PolPart}_J(f|_{J}))(\mathbf{t}_J) \ \ \mbox{for any $\mathcal{N}\subseteq J$}.
        \end{equation}
        Lemma \ref{lem:polpart} follows from the fact that  $l\nprec0$ if and only if $\restr{l}{J}\nprec0$ for any  $l\in\Z\langle E_v^*\mid v\in\mathcal{N}\rangle\oplus \Z_{<0}\langle E_v^*\mid v\in\mathcal{E}\rangle$.
         This will follow from the next Lemma \ref{Lem:singDistribution}.
        \begin{Lem}\label{Lem:singDistribution}
             Let $J\sqcup J^* = \mathcal{V}$ be a partition of $\mathcal{V}$  such that $J,J^*$ are both non-empty. Then $\restr{l}{J}\prec0$ if and only if $l\prec0$ for any $l\in\Q\langle E_v^*\mid v\in J\rangle\oplus\Q_{\le0}\langle E_v^*\mid v\in J^*\rangle$.
        \end{Lem}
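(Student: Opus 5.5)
The implication $l\prec0\Rightarrow \restr{l}{J}\prec0$ is trivial, so the content is the converse. I would prove it by a maximum-principle argument based on the negative definiteness of the intersection form. No result beyond the basic properties of $\Gamma$ and $L(\Gamma)$ from \ref{ss:2.1} is needed.

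\textbf{Setup.} Write $l=\sum_{v\in\mathcal{V}}a_vE_v^*$ with $a_u\le 0$ for $u\in J^*$, and set $m_w:=m_w(l)$. By (\ref{eq:coords}), $(l,E_u)=-a_u\ge 0$ for every $u\in J^*$. Assume $m_v<0$ for all $v\in J$. It remains to show $m_u<0$ for all $u\in J^*$.

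\textbf{Step 1: $m_u\le0$ on $J^*$.}
\begin{itemize}
\item Split $l=x+y$, where $x:=\sum_{u\in J^*}\max(m_u,0)E_u$ is the positive part of $l$. It is supported in $J^*$, and $y$ has all coefficients $\le 0$ and vanishes on $\Supp(x)$.
\item For $u\in\Supp(x)$ we have $(y,E_u)=\sum_{w\neq u}y_w(E_w,E_u)\le 0$, since $y_w\le0$ and the off-diagonal entries $(E_w,E_u)$ are $\ge0$.
\item Hence $(x,E_u)=(l,E_u)-(y,E_u)\ge -a_u\ge 0$.
\item Therefore $(x,x)=\sum_{u\in\Supp(x)}x_u(x,E_u)\ge 0$. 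Negative definiteness forces $x=0$, that is, $m_u\le 0$ on $J^*$.
\end{itemize}

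\textbf{Step 2: strict inequality.} This is the step needing care. Suppose $m_u=0$ for some $u\in J^*$.
\begin{itemize}
\item Then $(l,E_u)=\sum_{w-u}m_w$, because the diagonal term vanishes.
\item Every $m_w$ is $\le0$: on $J$ by assumption, on $J^*$ by Step 1. So $(l,E_u)\le 0$.
\item Combined with $(l,E_u)=-a_u\ge0$, this gives $m_w=0$ for every neighbour $w$ of $u$.
\end{itemize}
The neighbours with $m_w=0$ cannot lie in $J$ (there $m_w<0$), so they lie in $J^*$, and the same argument applies to them. Propagating along the connected tree $\Gamma$, the zero set is a nonempty union of components. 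Since $\Gamma$ is connected, it is all of $\mathcal{V}$, so it meets the nonempty set $J$. This contradicts $m_v<0$ on $J$. Hence $l\prec0$.

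The application in Lemma \ref{lem:polpart} follows by taking $J\supseteq\mathcal{N}$, so that $J^*\subseteq\mathcal{E}$ and the coefficients on $J^*$ are non-positive. The cases where $J^*$ contains non-ends are not needed there.
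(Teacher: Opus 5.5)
Your proof is correct, but it runs the positivity argument differently from the paper. The paper splits $l=l|_{J}+l|_{J^*}$ and works on the subgraph spanned by $J^*$. Using discs $C_v$ with $C_v+E_v^*=0$ in $H_2(\widetilde{X},\partial\widetilde{X},\mathbb{Z})$, it gets $(l,E_u)\ge 0$ for $u\in J^*$. Hence $(l|_{J^*},E_u)\ge 0$ for all $u\in J^*$, strictly for $u$ adjacent to $J$. It then concludes $l|_{J^*}\prec 0$ by quoting the standard consequence of negative definiteness: a nonzero cycle $x$ of a connected negative definite graph with $(x,E_u)\le 0$ for all $u$ has all coefficients strictly positive.

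You instead stay on the whole connected graph $\Gamma$. You read $(l,E_u)=-a_u$ directly off (\ref{eq:coords}), so no Lefschetz duality is needed. You obtain $m_u\le 0$ on $J^*$ by the positive-part trick, and strictness by a discrete maximum principle propagated through $\Gamma$. In effect you reprove that Lipman-cone fact inline. Propagating over all of $\Gamma$ also handles a point the paper passes over quickly. If the subgraph on $J^*$ is disconnected, one needs a strict inequality in each of its components, and this holds only because every such component is adjacent to $J$. The paper's route is shorter given the quoted fact; yours is fully self-contained.

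One small correction to your closing aside. With $J\supseteq\mathcal{N}$ one only has $J^*\subseteq\mathcal{V}\setminus\mathcal{N}$, so bamboo vertices (with coefficient $0$) may lie in $J^*$, not just ends. Moreover, the general case of the lemma is genuinely used in Lemma \ref{lem:varchangeSign}, where $J^*$ contains the peeled nodes $\mathcal{N}_{\mathrm{e}}\setminus I$. So it is good that your argument does not rely on that restriction.
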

        \begin{proof} We prove that  $\restr{l}{J}\prec0$ implies  $l\prec0$. We may assume that $l\in L'(\Gamma)$, otherwise we can multiply $l$ with a positive integer $N$ such that $N\cdot l\in L'(\Gamma)$.
            Let $\widetilde{X}$ be the plumbed 4-manifold associated with $\Gamma$ and for any $v\in \mathcal{V}$ let $C_v$ be a generic disc
            (with boundary in $\partial \widetilde{X}$) transversal to $E_v$, oriented in such a way that $C_v+E^*_v=0$ in $H_2(\widetilde{X},\partial \widetilde{X},\mathbb{Z})$.  Note that this lattice can be identified with $L'$
            by Lefschetz duality. Write $l$ as $\sum_{v\in J}a_vE^*_v+\sum_{v\in J^*} c_vE^*_v$ with $c_v\leq 0$. 
            Then $l=-\sum_{v\in J}a_vC_v-\sum_{v\in J^*} c_vC_v$ in $H_2(\widetilde{X},\partial \widetilde{X},\mathbb{Z})$.
            Choose an arbitrary $u\in J^*$. Then $(C_v,E_u)\geq 0$ for $v\in J^*$,
            $(C_v,E_u)=0$ for $v\in J$, and $(l|_{E_v}, E_u)\leq 0$  for any $v\in J$ since 
            $\restr{l}{J}\prec0$. In fact, if $u$ is  a neighbour of some $v\in J$ then 
             $(l|_{E_v}, E_u)< 0$. Therefore, $(l|_{J^*}, E_u)\geq 0$ for any 
             $u\in J^*$, and at least one of the inequalities is strict. 
             Then $\restr{l}{J^*}\prec0$ by the negative definiteness of the intersection form. 
        \end{proof}
        \subsection{Polynomial part after variable change}
        From now on, we use again all the notations of section \ref{s:surgtopseries}. The goal of this Section is to obtain a surgery type formula (analog to Theorem \ref{Thm:zetaSurgery:ZHS}) for the polynomial $\operatorname{Pol}_{\Gamma}(\mathbf{t}_\mathcal{N})$ using the polynomials $\operatorname{Pol}_{\Gamma_{I}}(\mathbf{t}_{J_I})$ associated with the induced trees $\Gamma_I$ (see Section \ref{ss:inducedgraphs}) for any $I\subseteq \mathcal{N}_{\mathrm{e}}$. 
        For the case $|\mathcal{N}|=2$ (equivalently $\mathcal{N}_{\mathrm{n}}=\emptyset$), we denote $\varphi_{\emptyset}^*\operatorname{Pol}_{\Gamma_{\emptyset}} := 0$, 
        which is equal to $\operatorname{PolPart}\big((\varphi_{\emptyset}^*f_{\Gamma_{\emptyset}})(\mathbf{t}_{\mathcal{N}})\big)$.

        First we show that the change of variables map `preserves signs' 
        (at least on exponents in the Laurent series expansion).
        \begin{Lem}\label{lem:varchangeSign}
            Let $I\subseteq\mathcal{N}_{\mathrm{e}}$ and $l$ be an exponent in the Laurent series expansion of $f_{\Gamma_I}(\mathbf{t}_{J_I})$. Then $\varphi_I(l)\prec0$ if and only if $l\prec0$.
        \end{Lem}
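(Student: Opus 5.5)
We reduce both sides to a statement about the coefficients of $l$ on the common coordinates $\mathcal{N}_{\mathrm{n}}\cup I$ together with one extra coordinate per deleted end-node. The exponents in the Laurent expansion of $f_{\Gamma_I}(\mathbf{t}_{J_I})$ (expanding every factor $1/(1-\mathbf{t}^a)$ in negative powers, as in the paper) are of the form
\[
l=\sum_{n\in\mathcal{N}_{\mathrm{n}}\cup I}a_n \omega_n Y_n+\sum_{n\in\mathcal{N}_{\mathrm{e}}\setminus I}c_nY_n+\sum_{e}c_e\, d_{n,e}^{-1}\omega_nY_n ,
\]
with $a_n$ arbitrary integers and $c_n,c_e\le 0$. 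This uses Lemma~\ref{lem:induced_expr}. Since $\varphi_I(Y_n)=X_n$, the cycle $\varphi_I(l)$ has the same expression with $Y_n$ replaced by $X_n$.

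So the plan is to show that both $l$ and $\varphi_I(l)$ lie in a cone where Lemma~\ref{Lem:singDistribution} applies, and then compare on the common coordinates. The key observation is that these are restrictions of genuine cycles. Indeed, $\varphi_I(l)$ is the restriction to $\mathcal{N}$ of
\[
\tilde l=\sum a_nE^*_{n,\Gamma}+\sum_{n\notin I}c_nE^*_{n,\Gamma}+\sum c_e E^*_{e,\Gamma}\in L(\Gamma)\otimes\Q .
\]
Likewise $l$ is the restriction to $J_I$ of
\[
\hat l=\sum a_nE^*_{n,\Gamma_I}+\sum_{n\notin I}c_nE^*_{e_n,\Gamma_I}+\sum c_eE^*_{e,\Gamma_I}.
\]
In both cases the coefficients at the nodes $\mathcal{N}_{\mathrm{n}}\cup I$ are arbitrary. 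All other coefficients sit on end-type vertices and are $\le0$: in $\Gamma$ these are $E^*_{n,\Gamma}$ for $n\notin I$ and $E^*_{e,\Gamma}$. To make Lemma~\ref{Lem:singDistribution} applicable, take $J$ to be the set of vertices carrying arbitrary coefficients and $J^*$ its complement.

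Step 1 is to check that $\varphi_I(l)\prec0$ is equivalent to $\tilde l\prec0$. By Lemma~\ref{Lem:singDistribution} with $J=\mathcal{N}_{\mathrm{n}}\cup I$, the condition $\tilde l|_{\mathcal{N}_{\mathrm{n}}\cup I}\prec0$ already forces $\tilde l\prec0$. In particular $\varphi_I(l)\prec0$ is equivalent to $\tilde l|_{\mathcal{N}_{\mathrm{n}}\cup I}\prec0$. (If $\mathcal{N}_{\mathrm{n}}\cup I=\emptyset$, the coefficients are all nonpositive and nonzero, so every exponent is $\prec0$ on both sides; this is handled separately and trivially.)

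Step 2 applies the same argument in $\Gamma_I$. Lemma~\ref{Lem:singDistribution} with $J=\mathcal{N}_{\mathrm{n}}\cup I$ in $\Gamma_I$ shows that $l\prec0$ is equivalent to $\hat l|_{\mathcal{N}_{\mathrm{n}}\cup I}\prec0$.

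Step 3 closes the argument with Lemma~\ref{lem:varRestr}, which gives $\varphi_I(l)|_{\mathcal{N}_{\mathrm{n}}\cup I}=l|_{\mathcal{N}_{\mathrm{n}}\cup I}$. The two conditions from Steps 1 and 2 therefore coincide, which proves the equivalence.

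The main obstacle is the bookkeeping that legitimately places $\tilde l$ and $\hat l$ in the domain of Lemma~\ref{Lem:singDistribution}. First, $E^*_{n,\Gamma}$ for $n\in\mathcal{N}_{\mathrm{e}}\setminus I$ must enter with nonpositive coefficient; this holds because it arises from $1/(1-x_n)$. Second, the factors $(1-x_n^{\omega_n})^{\delta_n-2}$ contribute only node terms. Third, the lemma needs $J^*\neq\emptyset$, which holds since ends exist. I would also double-check that $l$ determines $\hat l$ up to the choice of expansion; this is harmless, since any representation yields the same restriction.
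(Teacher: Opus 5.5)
Your proof is correct and follows essentially the paper's argument: lift $l$ and $\varphi_I(l)$ to genuine cycles in $L(\Gamma_I)$ and $L(\Gamma)\otimes\Q$ whose coefficients on $E^*_v$ are arbitrary for $v\in\mathcal{N}_{\mathrm{n}}\cup I$ and nonpositive otherwise, apply Lemma~\ref{Lem:singDistribution} in both graphs to reduce each condition to the $\mathcal{N}_{\mathrm{n}}\cup I$ coordinates, and conclude with Lemma~\ref{lem:varRestr}. One small slip: since $X_n=\omega_n^{-1}E^*_{n,\Gamma}|_{\mathcal{N}}$, the term of $\tilde l$ for $n\in\mathcal{N}_{\mathrm{e}}\setminus I$ should be $(c_n/\omega_n)E^*_{n,\Gamma}$ rather than $c_nE^*_{n,\Gamma}$ (compare the paper's $l_2$); the coefficient stays nonpositive, so the argument is unaffected.
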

        \begin{proof}
            By Lemma \ref{lem:induced_expr} the cycle  $l$ has the form 
            \[
                l =  \sum_{n\in\mathcal{N}_{\mathrm{n}}\cup I}\Big(a_n\omega_n-\sum_{\substack{e\in\mathcal{E}\\ n-_be}}a_e\omega_{e}\Big)Y_n - \sum_{n\in\mathcal{N}_{\mathrm{e}}\setminus I}a_nY_n,
            \]
            where $a_v\ge0$. Consider the (unrestricted) cycle $l_1$ given by
            \[
                l_1= \sum_{n\in\mathcal{N}_{\mathrm{n}}\cup I}\Big(a_nE_{n,\Gamma_I}^*-\sum_{\substack{e\in\mathcal{E}\\ n-_be}}a_eE_{e,\Gamma_I}^*\Big) - \sum_{n\in\mathcal{N}_{\mathrm{e}}\setminus I}a_nE_{e_n,\Gamma_I}^*
                \in L(\Gamma_I).
            \]
            By Lemma \ref{Lem:singDistribution} we have $l_1\prec0$ if and only if $\restr{l_1}{\mathcal{N}_{\mathrm{n}}\cup I}\prec0$.
            The equality $\restr{l_1}{J_I} = l$ implies that $l\prec0$ if and only if $\restr{l}{\mathcal{N}_{\mathrm{n}}\cup I}\prec0$.
            In a similar manner, we consider the cycle $l_2$ given by
            \[
                l_2 = \sum_{n\in\mathcal{N}_{\mathrm{n}}\cup I}\Big(a_nE_{n,\Gamma}^*-\sum_{\substack{e\in\mathcal{E}\\ n-_be}}a_eE_{e,\Gamma}^*\Big) - \sum_{n\in\mathcal{N}_{\mathrm{e}}\setminus I}\frac{a_n}{\omega_n}E_{n,\Gamma}^*
                \in L(\Gamma)\otimes\Q.
            \]
            Then $\restr{l_2}{\mathcal{N}} = \varphi_I(l)$. By Lemma \ref{Lem:singDistribution} we have that $l_2\prec0$ if and only if $\restr{l_2}{\mathcal{N}_{\mathrm{n}}\cup I}\prec0$.
            Thus $\varphi_I(l)\prec0$ if and only if $\restr{\varphi_I(l)}{\mathcal{N}_{\mathrm{n}}\cup I}\prec0$.
            Then by Lemma \ref{lem:varRestr} the proof is complete.
        \end{proof}
        \begin{Cor}\label{Lem:PolPartZHS}
            The $\operatorname{PolPart}$ and the change of variable operator $\varphi_I^*$ commute. More precisely
            For any $I\subseteq\mathcal{N}_{\mathrm{e}}$ the following equality holds
            \[
                \operatorname{PolPart}\big((\varphi_I^*f_{\Gamma_I})(\mathbf{t}_{\mathcal{N}})\big) = \varphi_I^*\big(\operatorname{Pol}_{\Gamma_I}(\mathbf{t}_{J_I})\big).
            \]
        \end{Cor}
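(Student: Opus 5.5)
We apply Lemma \ref{lem:varchangeSign} termwise to the Laurent expansion. Expand $f_{\Gamma_I}(\mathbf{t}_{J_I})$ as an element of $\Z[[\mathbf{t}_{J_I}^{-1}]][\mathbf{t}_{J_I}]$, i.e. write it as a sum $\sum_l s(l)\mathbf{t}_{J_I}^l$ over exponents $l$. These exponents have the form described in the proof of Lemma \ref{lem:varchangeSign}: combinations of the $Y_n$ with bounded nonnegative coefficients at the $\omega_n$-parts and nonpositive ones coming from the expansions $1/(1-y)=-y^{-1}-y^{-2}-\dots$. By (\ref{eq:polred}) applied to $\Gamma_I$ with $J=J_I\supseteq\mathcal{N}(\Gamma_I)$, we have $\operatorname{Pol}_{\Gamma_I}(\mathbf{t}_{J_I})=\sum_{l\nprec 0}s(l)\mathbf{t}_{J_I}^l$.

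Next, $\varphi_I$ is a $\Q$-linear isomorphism sending $Y_n\mapsto X_n$. Hence $\varphi_I^*$ maps the monomial $\mathbf{t}_{J_I}^l$ to $\mathbf{t}_{\mathcal{N}}^{\varphi_I(l)}$, and it maps the expansion of each factor $1/(1-y_n^{k})$ in negative powers to the corresponding expansion of $1/(1-x_n^{k})$. Therefore the Laurent expansion of $(\varphi_I^*f_{\Gamma_I})(\mathbf{t}_{\mathcal{N}})$ (in the sense used for $\operatorname{PolPart}$, with $x_n^{k}$ having positive exponent, $X_n\succ0$) is exactly $\sum_l s(l)\mathbf{t}_{\mathcal{N}}^{\varphi_I(l)}$. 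Injectivity of $\varphi_I$ ensures that distinct exponents stay distinct, so no collapsing or cancellation of coefficients occurs. One should also check that the expansion directions agree. This holds because $Y_n\succ 0$ in $J_I$-coordinates and $X_n\succ0$ in $\mathcal{N}$-coordinates, so both are expansions "at infinity" of the same product formula (\ref{eq:reducedzeta_xn}) versus Lemma \ref{lem:induced_expr}.

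Finally, $\operatorname{PolPart}\big(\varphi_I^*f_{\Gamma_I}\big)=\sum_{\varphi_I(l)\nprec0}s(l)\mathbf{t}_{\mathcal{N}}^{\varphi_I(l)}$. By Lemma \ref{lem:varchangeSign}, the condition $\varphi_I(l)\nprec 0$ is equivalent to $l\nprec0$ for every exponent $l$ occurring, so the sum equals $\varphi_I^*\big(\sum_{l\nprec0}s(l)\mathbf{t}_{J_I}^l\big)=\varphi_I^*\operatorname{Pol}_{\Gamma_I}(\mathbf{t}_{J_I})$.

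The case $I=\emptyset$ with $|\mathcal{N}|=2$ holds by the stated convention, since $\prod 1/(1-x_n)$ has only $\prec0$ exponents. The only delicate point is the compatibility of the two expansions under $\varphi_I^*$, which reduces to the positivity of $X_n$ and $Y_n$.
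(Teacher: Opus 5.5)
Your proposal is correct and follows essentially the same route as the paper's proof. You use (\ref{eq:polred}) to identify $\operatorname{Pol}_{\Gamma_I}(\mathbf{t}_{J_I})$ with the truncation $\sum_{l\nprec 0}s(l)\mathbf{t}_{J_I}^l$, push the Laurent expansion through $\varphi_I^*$ termwise, and then apply Lemma~\ref{lem:varchangeSign} to trade $\varphi_I(l)\nprec0$ for $l\nprec0$. Your extra checks are correct details that the paper leaves implicit: injectivity of $\varphi_I$, agreement of the expansions at infinity via $X_n,Y_n\succ0$, and the convention case $I=\emptyset$, $|\mathcal{N}|=2$.
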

        \noindent Note that on the left hand side of the identity one applies the $\operatorname{PolPart}:=\operatorname{PolPart}_{\mathcal{N}}$ operator with respect to the full index set $\mathcal{N}$, hence the obtained polynomial has variables $\mathbf{t}_{\mathcal{N}}$. Similarly, on the right hand the change of variable operator provides a polynomial in $\mathbf{t}_{\mathcal{N}}$.
        \begin{proof}
            By (\ref{eq:polred}), one needs to show that $\operatorname{PolPart}\big((\varphi_I^*f_{\Gamma_I})(\mathbf{t}_{\mathcal{N}})\big)= \varphi_I^*\big(\operatorname{PolPart}_{\color{blue}J_I}(f_{\Gamma_I}|_{J_I})(\mathbf{t}_{J_I})\big)$ for $I\subsetneq \mathcal{N}_{\mathrm{e}}$.  Suppose the Laurent expansion is given by
            \[
                (f_{\Gamma_I}|_{J_I})(\mathbf{t}_{J_I}) = \sum w(l)\mathbf{t}_{J_I}^{l} \in\Z[[\mathbf{t}_{J_I}^{-1}]][\mathbf{t}_{J_I}].
            \]
            Then the Laurent expansion of $\varphi_I^*f_{\Gamma_I}$ is given by
            \[
                 (\varphi_I^*f_{\Gamma_I})(\mathbf{t}_\mathcal{N}) = \sum w(l)\mathbf{t}_{\mathcal{N}}^{\varphi_I(l)} \in\Z[[\mathbf{t}_{\mathcal{N}}^{-1}]][\mathbf{t}_{\mathcal{N}}].
            \]
            Truncating the corresponding Laurent series expansions, we can write
        	\[
        		\operatorname{PolPart}_{J_I}(f_{\Gamma_I}|_{J_I})(\mathbf{t}_{J_I}) = \sum_{l\nprec0}w(l)\mathbf{t}_{J_I}^{l} \quad \mbox{and} \quad \operatorname{PolPart}(\varphi_I^*f_{\Gamma_I})(\mathbf{t}_{\mathcal{N}}) = \sum_{\varphi_I(l)\nprec0}w(l)\mathbf{t}_{\mathcal{N}}^{\varphi_I(l)}.
        	\]
            In the second identity, due to Lemma \ref{lem:varchangeSign} we can replace $\varphi_I(l)\nprec0$ with $l\nprec0$. Then applying $\varphi_I^*$ to the first identity completes the proof.
        \end{proof}
        Finally, by applying the $\operatorname{PolPart}$ operator to the formula in Theorem \ref{Thm:zetaSurgery:ZHS}, and then Corollary \ref{Lem:PolPartZHS} deduces the following surgery formula for the polynomial parts. 
        \begin{Thm}\label{Thm:PolSurgery}
        	\begin{equation}\label{eq:surgerypol}
        			(-1)^{|\mathcal{N}_{\mathrm{e}}|}\operatorname{Pol}_{\Gamma}(\mathbf{t}_{\mathcal{N}}) + \sum_{I\subsetneq \mathcal{N}_{\mathrm{e}}}(-1)^{|I|}(\varphi_I^*\operatorname{Pol}_{\Gamma_I})(\mathbf{t}_{\mathcal{N}}) = \prod_{n\in\mathcal{N}}P_n(x_n) =\prod_{n\in\mathcal{N}}P_n(\mathbf{t}_{\mathcal{N}}^{X_n}).
        	\end{equation}
        \end{Thm}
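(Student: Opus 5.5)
The plan is to apply the operator $\operatorname{PolPart}=\operatorname{PolPart}_{\mathcal{N}}$ to both sides of the identity (\ref{eq:surgfunc}) of Theorem \ref{Thm:zetaSurgery:ZHS}. All terms are expanded in $\Z[[\mathbf{t}_{\mathcal{N}}^{-1}]][\mathbf{t}_{\mathcal{N}}]$, with each factor $1/(1-x_n)$ expanded in negative powers. Since $\operatorname{PolPart}$ is defined by truncating the set of exponents, it is $\Z$-linear on these Laurent series, provided the expansions are compatible. This compatibility has to be checked once: (\ref{eq:surgfunc}) is an identity of rational functions. Each summand $\varphi_I^*f_{\Gamma_I}$ is a product of the $P_n(x_n)$ and of factors $1/(1-x_n)$, with all $x_n=\mathbf{t}_{\mathcal{N}}^{X_n}$ and $X_n\succ 0$. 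Hence expanding every $1/(1-x_n)$ in the same $\Z[[\mathbf{t}^{-1}]]$ direction gives a ring homomorphism from the relevant localization. The identity therefore persists at the level of Laurent expansions, and linearity of $\operatorname{PolPart}$ applies term by term.

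Next I treat each summand on the left. For $I=\mathcal{N}_{\mathrm{e}}$, Lemma \ref{lem:varRestr} gives $\varphi_{\mathcal{N}_{\mathrm{e}}}^*=\mathrm{id}$. Then (\ref{eq:polred}) with $J=\mathcal{N}$ yields $\operatorname{PolPart}(f_\Gamma(\mathbf{t}_{\mathcal{N}}))=\operatorname{Pol}_\Gamma(\mathbf{t}_{\mathcal{N}})$, with sign $(-1)^{|\mathcal{N}_{\mathrm{e}}|}$. For $I\subsetneq\mathcal{N}_{\mathrm{e}}$ with $I\in\mathcal{P}^*(\mathcal{N}_{\mathrm{e}})$, Corollary \ref{Lem:PolPartZHS} gives $\operatorname{PolPart}(\varphi_I^*f_{\Gamma_I})=\varphi_I^*\operatorname{Pol}_{\Gamma_I}(\mathbf{t}_{J_I})$. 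The exceptional case $|\mathcal{N}|=2$, $I=\emptyset$ must be checked directly: $\prod_{n}1/(1-x_n)$ expands as $\prod_n(-\sum_{k\ge1}x_n^{-k})$. Every exponent is a negative combination of $X_n$'s, which are positive combinations of restricted $E^*_n$ with $E^*_n\succ0$. Thus all exponents are $\prec0$, the polynomial part is $0$, and this matches the convention $\varphi_\emptyset^*\operatorname{Pol}_{\Gamma_\emptyset}:=0$.

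Finally, for the right-hand side, $\prod_n P_n(x_n)$ is a genuine polynomial in the $x_n$. Its exponents are nonnegative combinations of $X_n$, so none of them is $\prec0$. Hence $\operatorname{PolPart}$ fixes it, and combining the three pieces gives (\ref{eq:surgerypol}).

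The main obstacle is the compatibility of the expansion conventions. Corollary \ref{Lem:PolPartZHS} and (\ref{eq:polred}) refer to the Laurent expansion in which each factor $1/(1-\mathbf{t}^{a})$ is expanded in negative powers. One must make sure this is the same expansion used when regarding (\ref{eq:surgfunc}) as an identity of Laurent series, and also that $\varphi_I^*$ commutes with that expansion, which holds since $\varphi_I(Y_n)=X_n\succ0$. Once this is in place, everything else is linearity.
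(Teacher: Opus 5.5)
Your proposal is correct and follows the paper's own one-line argument: apply $\operatorname{PolPart}=\operatorname{PolPart}_{\mathcal{N}}$ to the identity of Theorem \ref{Thm:zetaSurgery:ZHS}, use Corollary \ref{Lem:PolPartZHS} to commute $\operatorname{PolPart}$ with $\varphi_I^*$, and observe that $\prod_n P_n(x_n)$ has only exponents $\nprec 0$, so $\operatorname{PolPart}$ fixes it. The extra checks you add are details the paper leaves implicit or records only in passing: compatibility of the negative-power expansions with the rational identity and with $\varphi_I^*$ (via $X_n\succ 0$), and the convention $\varphi_\emptyset^*\operatorname{Pol}_{\Gamma_\emptyset}=0$ when $|\mathcal{N}|=2$.
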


        \noindent Notice that for $I\subsetneq\mathcal{N}_{\mathrm{e}}$ the reduced polynomial part $\operatorname{Pol}_{\Gamma_I}(\mathbf{t}_{J_I})$ is not reduced to the nodes of the respective graphs, since extra variables from some legs are also included. This somehow corresponds to the `gluing' and ensures a `recursive connection' of the reduced polynomial parts appearing in the above formula.  

        \begin{example}
            Let us consider the splice diagram in Figure \ref{fig:plumbing3}. In this case $\mathcal{N} = \mathcal{N}_{\mathrm{e}} = \{E_2,E_3\}$ and the vertex obtained from the $0$-absorption is $E_5$.
            We only consider the cases $I = \{E_2\}$ or $I = \{E_3\}$, as by definition $(\varphi_{\emptyset}^*\operatorname{Pol_{\Gamma_{\emptyset}}})(t_2,t_3) = 0$.

            For $I = \{E_2\}$ the splice diagram is shown by the left hand side of Figure \ref{fig:plumbing4}, the corresponding change of variable map $\varphi_{\{E_2\}}$ is given by
            \[
                \varphi_{\{E_2\}}(E_2) = E_2-E_3\quad \mbox{and}\quad \varphi_{\{E_2\}}(E_5) = 11E_3.
            \]
            Moreover, one can calculate that $(\varphi_{\{E_2\}}^*\operatorname{Pol}_{\Gamma_{\{E_2\}}})(t_2,t_3) = t_2t_3^{-1} + t_2^{11}t_3^{11}$.

            For $I = \{E_3\}$ the splice diagram is given by Figure \ref{fig:plumbing5}, the corresponding change of variable map $\varphi_{\{E_3\}}$ is as follows 
            \[
                \varphi_{\{E_3\}}(E_3) = E_3-E_2,\quad \mbox{and}\quad \varphi_{\{E_3\}}(E_5) = 11E_2.  
            \]
            Hence, we deduce that $(\varphi_{\{E_3\}}^*\operatorname{Pol}_{\Gamma_{\{E_3\}}})(t_2,t_3) = t_2^{-1}t_3 + t_2^{4}t_3^{7} + t_2^{9}t_3^{13}$. The polynomials $P_2,P_3$ are expressed by
            \[
                P_2(t)=\frac{1}{1-t}-\frac{1-t^{14}}{(1-t^2)(1-t^7)}= t+t^3+t^5,\quad P_3(t)  = \frac{1}{1-t}-\frac{1-t^6}{(1-t^2)(1-t^3)}= t.
            \]
            Thus, we get the polynomial part reduced to the node variables as 
            \begin{align*}
                \operatorname{Pol}_{\Gamma}(t_2,t_3)&= (\varphi_{\{E_2\}}^*\operatorname{Pol}_{\Gamma_{\{E_2\}}})(t_2,t_3)+(\varphi_{\{E_3\}}^*\operatorname{Pol}_{\Gamma_{\{E_3\}}})(t_2,t_3)+P_2(t_2^{5}t_3^{6})P_3(t_2^{14}t_3^{19})\\ 
                &=t_2t_3^{-1} + t_2^{11}t_3^{11} +t_2^{-1}t_3 + t_2^{4}t_3^{7} + t_2^{9}t_3^{13}+ t_2^{19}t_3^{25} + t_2^{29}t_3^{37} + t_2^{39}t_3^{49}.
            \end{align*}
            \begin{figure}[h!]
                \centering
                \begin{tikzpicture}[roundnode/.style={circle, draw=black, fill=black, very thick, inner sep = 1},line width=1,x=\graphscale,y=\graphscale]
        	\node (1) {};
        	\path (1)++(1.5,0) node[roundnode] (2) {};
        	\path (1)++(3,0) node[roundnode] (3) {};
        	
        	\path (1)++(120:1.5) node (10) {};
        	\path (1)++(-120:1.5) node (11) {};
        	\path (2)++(90:1.5) node (20) {};
        	\path (3)++(60:1.5) node[roundnode] (30) {};
        	\path (3)++(-60:1.5) node[roundnode] (31) {};
        	
        	\draw  (2)--(3) (30)--(3)--(31);
        	\begin{small}
        		\renewcommand{\r}{0.5}
        		\path (3)++(80:\r) node {$2$};
        		\path (3)++(-80:\r) node {$3$};
        		\path (3)++(160:\r) node {$19$};	
        	\end{small}	
        \end{tikzpicture}
        \begin{tikzpicture}[roundnode/.style={circle, draw=black, fill=black, very thick, inner sep = 1},line width=1,x=\graphscale,y=\graphscale]
        	\node(1) {};
        	\path (1)++(3,0) node (2) {};
        	\foreach \i/\j in {2/5,3/2,4/6,5/3}{
        		\path (1)++(\i,0) node[roundnode] (\j) {};
        	}
        	\path (1)++(120:1) node (8) {};
        	\path (8)++(180:1) node (7) {};
        	
        	\path (1)++(-120:1) node (11) {};
        	\foreach \i/\j in {1/10,2/9}{
        		\path (11)++(180:\i) node (\j) {};
        	}
        	\path (3)++(60:1) node[roundnode] (13) {};
        	\path (3)++(-60:1) node[roundnode] (14) {};
        	
        	\draw (5)--(3) (13)--(3)--(14);
        	\begin{small}
        		\renewcommand{\r}{0.3}
        		\foreach \w/\i in {2/13,3/14,1/3}{
        			\path (\i)++(0:\r) node {$-\w$};
        		}
        		\foreach \w/\i in {2/5,2/2,7/6}{
        			\path (\i)++(-90:\r) node {$-\w$};
        		}
        		\path (3)++({\r/tan(120)},\r) node {$E_3$};
        		\path (6)++(0,\r) node {$E_5$};
        	\end{small}
        \end{tikzpicture}
                \caption{}
                \label{fig:plumbing5}
            \end{figure}
        \end{example}

        \section{Surgery formulae with over-reduced variables}\label{s:OverRedSurg}
        \subsection{The new setup with reduced variables}
            Let $\mathfrak{S}$ be a  connected, not necessarily minimal splice diagram. 
            As above,  $\mathcal{N}(\mathfrak{S})$ denotes the set of nodes of $\mathfrak{S}$, $\mathcal{N}_{\mathrm{e}}(\mathfrak{S})$ the set of  end-nodes of $\mathfrak{S}$, 
            and $\mathcal{N}_{\mathrm{n}}(\mathfrak{S}):=\mathcal{N}(\mathfrak{S})\setminus \mathcal{N}_{\mathrm{e}}(\mathfrak{S})$. For simplicity,  we also write $|\mathfrak{S}|:=|\mathcal{N}(\mathfrak{S})|$.
            
            We say that $\mathfrak{S}'$ is a {\it complete}  sub-diagram of $\mathfrak{S}$ if it is a connected sub-diagram of $\mathfrak{S}$ with at least one node, and each node in $\mathfrak{S}'$ has the same number of adjacent edges in $\mathfrak{S}'$ as in $\mathfrak{S}$.
            Equivalently,
            $\mathfrak{S}'$ satisfies  $\mathcal{N}(\mathfrak{S}')\neq\emptyset$
            and it is obtained by replacing each edge $u\sim v$
            with $u\in\mathcal{N}(\mathfrak{S}')$, $v\in\mathcal{N}(\mathfrak{S})\setminus\mathcal{N}(\mathfrak{S}')$ 
             by two legs (one for $u$ and one for $v$)
            and then taking that connected sub-diagram  whose nodes are $\mathcal{N}(\mathfrak{S}')$.         
            We denote this relation by  $\mathfrak{S}'\subseteq\mathfrak{S}$. We observe  that any complete sub-diagram of $\mathfrak{S}$ is uniquely determined by the set of its nodes.

            If $\mathfrak{S}'$ is a sub-diagram (not necessarily connected) of $\mathfrak{S}$, 
            then we denote by $\mathfrak{S}\setminus \mathfrak{S}'$ the disjoint union of 
            complete sub-diagrams whose nodes are 
            $\mathcal{N}(\mathfrak{S})\setminus \mathcal{N}(\mathfrak{S}')$.
            Note that if $u\sim v$, $u\in \mathcal{N}(\mathfrak{S})$ and 
            $v\in \mathcal{N}(\mathfrak{S}')$ then this edge is replaced in 
             $\mathfrak{S}\setminus \mathfrak{S}'$  by a leg connecting to $u$.

            We say that $\mathfrak{S}'\subseteq\mathfrak{S}$ is a \emph{peel} of $\mathfrak{S}$ if $\mathcal{N}_{\mathrm{n}}(\mathfrak{S})\subseteq\mathcal{N}(\mathfrak{S}')\subseteq\mathcal{N}(\mathfrak{S})$. In other words, $\mathfrak{S}'$ is a peel of $\mathfrak{S}$ if it is obtained by cutting away some end-nodes of $\mathfrak{S}$ (but not all if $|\mathfrak{S}|=2$). The relation $\mathfrak{S}'\hookrightarrow\mathfrak{S}$ denotes that $\mathfrak{S}'$ is a peel of $\mathfrak{S}$.

            Consider the plumbing graphs $\Gamma':=\Gamma(\mathfrak{S}')$ and $\Gamma:=\Gamma(\mathfrak{S})$ corresponding to $\mathfrak{S}'$ and $\mathfrak{S}$ respectively.
            For $\mathfrak{S}'\hookrightarrow\mathfrak{S}$, denote by $\varphi_{\mathfrak{S}',\mathfrak{S}}\colon \Q\langle E_{u,\Gamma'}\mid u\in \mathcal{N}(\mathfrak{S}')\cup \Theta_{\mathfrak{S}',\mathfrak{S}}\rangle \to \Q\langle E_{u,\Gamma}\mid u\in\mathcal{N}(\mathfrak{S})\rangle$ the change of variable map `from $\Gamma(\mathfrak{S}')$ to $\Gamma(\mathfrak{S})$',  as given in Section \ref{ss:varchange}, where $\Theta_{\mathfrak{S}',\mathfrak{S}}:=\{\theta_u\mid u\in\mathcal{N}(\mathfrak{S})\setminus\mathcal{N}(\mathfrak{S}')\}$.
            \begin{Rem}\label{Rem:PolSurgeryReform}
                Theorem \ref{Thm:PolSurgery} can be restated as follows
                \[
                    \sum_{\mathfrak{S}'\hookrightarrow\mathfrak{S}}(-1)^{|\mathfrak{S}|-|\mathfrak{S}'|}(\varphi_{\mathfrak{S}',\mathfrak{S}}^*\operatorname{Pol}_{\Gamma(\mathfrak{S}')})(\mathbf{t}_{\mathcal{N}(\mathfrak{S})}) = (-1)^{|\mathcal{N}_{\mathrm{e}}(\mathfrak{S})|}\prod_{n\in\mathcal{N}_{\mathrm{e}}(\mathfrak{S})}\frac{1-\Delta_n(x_n)}{1-x_n}\cdot\prod_{n\in\mathcal{N}_{\mathrm{n}}(\mathfrak{S})}\Delta_n(x_n).
                \]
                The above formula with $\operatorname{Pol}_{\Gamma(\mathfrak{S}')}$ replaced by $f_{\Gamma(\mathfrak{S}')}$, would give a reformulation of Theorem \ref{Thm:zetaSurgery:ZHS}.
            \end{Rem}
            
            Our goal is to use this reformulation to give splicing formulas for a reduced amount of variables, corresponding to some complete sub-diagram of $\mathfrak{S}$ (with at least two nodes). The general path of arguments is analogous to the one presented for the surgery formulae (\ref{eq:surgfunc}), (\ref{eq:surgerypol}).

            Fix a complete sub-diagram $\mathfrak{S}_0\subseteq\mathfrak{S}$, with at least two nodes. The nodes of $\mathfrak{S}_0$ will correspond to the variables to which we write the splice formulae, of the reduced series associated with $Z$ (see \ref{ss:5.3}) and the corresponding  polynomial parts (see \ref{ss:5.4}). The variables corresponding to $\mathcal{N}(\mathfrak{S}_0)$
            are the `over-reduced variables', appearing in the title of the section.
             
              First, we identify the right hand side (the correction term). It turns out that it is a polynomial, hence it will appear identically in both 
              surgery formulae. Similarly as in Theorem \ref{Thm:PolSurgery} (or Remark 
              \ref{Rem:PolSurgeryReform}) it is related with  Alexander polynomials of certain links. 
            \subsection{Normalized Alexander polynomials}
                For each $n\in\mathcal{N}(\mathfrak{S}_0)$ let $\mathfrak{S}_0(n)\subseteq\mathfrak{S}$ denote the complete sub-diagram 
                component of $\mathfrak{S}\setminus(\mathcal{N}(\mathfrak{S}_0)\setminus\{n\})$ that contains $n$.  Set $V_n := \{u\in\mathcal{N}(\mathfrak{S}_0)\mid u\sim n\}$ and let $\mathcal{L}_n$ be the graph link corresponding to $\mathfrak{S}_0(n)$, where there are arrows placed on all legs indexed by $V_n$. Let us denote the multivariable Alexander polynomial 
                of $\mathcal{L}_n$ (in the 3-manifold associated with $\mathfrak{S}_0(n)$) by
                $\Delta_{\mathcal{L}_n}(\mathbf{t})$ (in variables 
                $\{t_v\}_{v\in V_n}$). 
                Since all the arrows sit on the legs of the very same node $n$, 
                $\Delta_{\mathcal{L}_n}(\mathbf{t})$ can be expressed in terms of a one-variable polynomial, what we will denote by $\Delta_{\mathfrak{S}_0(n)}(t)$, and 
                we call  `normalized Alexander polynomial'.

                Indeed, 
                if $n\in\mathcal{N}_{\mathrm{e}}(\mathfrak{S}_0)$, then $|V_n|=1$,  and we define $\Delta_{\mathfrak{S}_0(n)}(t)$ as  the Alexander polynomial of $\mathcal{L}_n$. 
                In the case $n\in\mathcal{N}_{\mathrm{n}}(\mathfrak{S}_0)$
                we define $\Delta_{\mathfrak{S}_0(n)}(t)$ as 
                 \[
                    \Delta_{\mathfrak{S}_0(n)}(t) = \Delta_{\mathcal{L}_n'}(t)
                  \cdot   \frac{(1-t^{\omega_nd_n})^{|V_n|-1}}{1-t}, \quad \mbox{where} \ \ 
                    d_n = \prod_{\substack{u\sim n\\ u\in \mathcal{N}(\mathfrak{S})\setminus V_n}}d_{n,u},
                \] 
                 and $\Delta_{\mathcal{L}_n'}(t)$ is the Alexander polynomial of the graph link $\mathcal{L}_n'$ (with one component) given by removing all but one arrow-headed leg from the diagram representing $\mathcal{L}_n$.
                 Note that we do not need to specify the decoration of the leg which supports the arrow of $\mathcal{L}_n'$, since  $\Delta_{\mathcal{L}_n'}(t)$, by its very definition, is independent of this decoration.

                With this notation, one has (using the definition of the Alexander polynomials (\ref{eq:alexander}))
                $$\Delta_{\mathcal{L}_n}(\mathbf{t}_{V_n}) = \Delta_{\mathfrak{S}_0(n)}(t)\big|_{t\mapsto \mathbf{t}_{V_n}^{l_n}}, \ \ \  
                \mbox{where} \ \  
                    l_n = \sum_{v\in V_n} \Big(\prod_{\substack{u\sim n\\u\in V_n\setminus\{v\}}}d_{n,u}\Big)E_v.
                $$
                Next, the one-variable $\Delta_{\mathfrak{S}_0(n)}(t)$
                can be expressed in terms of the polynomials $\Delta_v(t)$ associated with  $\Gamma(\mathfrak{S})$ in Section \ref{ss:normalizedAlexander}.
                This emphasizes once more our strategy: the polynomials $\Delta_u(t) $
                behave as elementary `atoms' in several formulae (see e.g. Remark \ref{rem:2.3} for $f_\Gamma(\mathbf{t}_{\mathcal{N}})$, or identity 
                (\ref{eq:reducedzeta_xn}) for the reduced zeta functions after change of variables, or the surgery formula Theorem \ref{Thm:PolSurgery}). 
                Writing  $\Delta_{\mathfrak{S}_0(n)}(t)$ in terms of these atoms is the key step in the proof of the new general surgery formulae.

                \begin{Lem}\label{Lem:AlexanderMultipleFromSingle}
                    Fix an arbitrary node $n\in\mathcal{N}(\mathfrak{S}_0)$.
                    Denote by $\mathcal{N}_{\mathrm{n}}^{\mathfrak{S}_0(n)} := \mathcal{N}_{\mathrm{n}}(\mathfrak{S})\cap\mathcal{N}(\mathfrak{S}_0(n))$, $\mathcal{N}_{\mathrm{e}}^{\mathfrak{S}_0(n)} := \mathcal{N}_{\mathrm{e}}(\mathfrak{S})\cap\mathcal{N}(\mathfrak{S}_0(n))$. For each $u\in\mathcal{N}(\mathfrak{S}_0(n))$ denote
                    \[
                        m_{n,u} = \frac{w(\mathfrak{S}[n,u])}{\omega_u\cdot d_{n}'}, \ \ \ 
                        \mbox{where} \ \ \ 
                         d_n' := w(\mathfrak{S}[n,n])/(d_n\omega_n).
                    \]
                    Then
                    \[
                        \Delta_{\mathfrak{S}_0(n)}(t) = \prod_{u\in\mathcal{N}_{\mathrm{n}}^{\mathfrak{S}_0(n)}}\Delta_u(t^{m_{n,u}})\cdot\prod_{u\in\mathcal{N}_{\mathrm{e}}^{\mathfrak{S}_0(n)}}\frac{\Delta_u(t^{m_{n,u}})}{1-t^{m_{n,u}}}\cdot\begin{cases}
                            (1-t)& \ \mbox{if} \ \ n\in\mathcal{N}_{\mathrm{e}}(\mathfrak{S}_0)\\
                            1& \ \mbox{if} \ \ n\in\mathcal{N}_{\mathrm{n}}(\mathfrak{S}_0)
                        \end{cases}\ .
                    \]
                \end{Lem}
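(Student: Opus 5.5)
The plan is to compute $\Delta_{\mathcal{L}_n}$ directly from the Eisenbud--Neumann formula (\ref{eq:alexander}) applied to the diagram $\mathfrak{S}_0(n)$ with arrows on the legs in $V_n$. In that diagram the nodes are $\mathcal{N}(\mathfrak{S}_0(n))$, which are nodes of $\mathfrak{S}$ with the same valencies and weights. The ends are the original ends of $\mathfrak{S}$ attached to these nodes, together with the arrowheads $V_n$. Formula (\ref{eq:alexander}) writes $\Delta_{\mathcal{L}_n}$ as a product over the non-arrow vertices $v$ of $(1-\mathbf{t}_{V_n}^{E_v^*})^{\delta_v-2}$. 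Here $m_a(E_v^*)=w(\mathfrak{S}_0(n)[v,a])$ for arrows $a\in V_n$, and the extra factor $(1-t_a)$ appears only when $|V_n|=1$. Every path from $v$ to an arrow $a$ passes through $n$. So the exponent of $t_a$ factors as $w(\mathfrak{S}_0(n)[v,n))$ times the weights around $n$ other than the one towards $a$ and towards $v$'s side. Thus each monomial is a power of the single monomial $T=\mathbf{t}_{V_n}^{l_n}$, and the problem becomes purely one-variable.

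First step: for a node $u\in\mathcal{N}(\mathfrak{S}_0(n))$, $u\neq n$, I will check that $\mathbf{t}_{V_n}^{E_u^*}=T^{k_u}$ with $k_u=w(\mathfrak{S}_0(n)[u,n])/(\text{product of the weights at }n\text{ on edges to }V_n)$. I will then rewrite this via $w(\mathfrak{S}[n,u])$. The weights in $\mathfrak{S}_0(n)$ along the path agree with those of $\mathfrak{S}$, and the weights at $n$ are the same in both diagrams. Hence $w(\mathfrak{S}[n,u])=w(\mathfrak{S}_0(n)[n,u])$, so $k_u=\omega_u m_{n,u}$, provided $d_n'$ is exactly the product of the $V_n$-weights at $n$. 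This holds because $w(\mathfrak{S}[n,n])$ is the product of all weights at $n$, $\omega_n$ is the product of the leg weights, and $d_n$ is the product of the node-edge weights outside $V_n$. Similarly, for an end $e$ attached to $u$ with weight $d_{u,e}$, the exponent is $k_u/d_{u,e}=\omega_e m_{n,u}$ (with $\omega_e=\omega_u/d_{u,e}$). Grouping the factors by node $u\neq n$, I get $(1-t^{\omega_u m_{n,u}})^{\delta_u-2}/\prod_{e}(1-t^{\omega_e m_{n,u}})$. Comparing with (\ref{eq:AlexanderSingleNode}), this equals $\Delta_u(t^{m_{n,u}})$ for $u\in\mathcal{N}_{\mathrm{n}}(\mathfrak{S})$, and $\Delta_u(t^{m_{n,u}})/(1-t^{m_{n,u}})$ for $u\in\mathcal{N}_{\mathrm{e}}(\mathfrak{S})$. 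In the end-node case the valency of $u$ in $\mathfrak{S}$ counts the single node edge, and $\Delta_u$ contains an extra $(1-t)$, which is exactly cancelled.

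Second step: treat the node $n$ itself. Its exponent is $w(\mathfrak{S}_0(n)[n,n])/d_n'=\omega_n d_n$, and its ends give exponents $\omega_e d_n$. Here $m_{n,n}=d_n$ by definition of $d_n'$. If $n\in\mathcal{N}_{\mathrm{e}}(\mathfrak{S}_0)$, then $|V_n|=1$ and $T=t_a$. The node-$n$ factor has the form $\Delta_n(t^{d_n})$ or $\Delta_n(t^{d_n})/(1-t^{d_n})$, depending on whether $n$ is an end-node of $\mathfrak{S}$. The extra $(1-t)$ from (\ref{eq:alexander}) produces the stated factor $(1-t)$, so the claimed formula matches. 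If $n\in\mathcal{N}_{\mathrm{n}}(\mathfrak{S}_0)$, I will instead compute $\Delta_{\mathcal{L}_n'}$, which has one arrow. Removing the other $|V_n|-1$ arrows turns them into legs of $n$, so the valency of $n$ is unchanged. The factor $(1-t)$ from the one-component case and the new end factors $(1-t^{\cdot})$ at $n$ then appear. These are exactly compensated by the correction $(1-t^{\omega_n d_n})^{|V_n|-1}/(1-t)$ in the definition of $\Delta_{\mathfrak{S}_0(n)}$. Hence the result is the same product without a $(1-t)$ factor.

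The main obstacle is the bookkeeping in this last case. In $\mathcal{L}_n'$ the exponents are computed with respect to a single arrow, whose weight is the weight of the removed edge. I must check that the rescaled variable coincides with $T$, and that the removed-arrow leg factors equal $(1-t^{\omega_n d_n})$ after normalization. Independence of $\Delta_{\mathcal{L}_n'}$ from the arrow's weight (noted in the text) should make this work. I also need to confirm that $m_{n,u}$ is an integer and that $\delta_n$ is counted correctly in the two diagrams.
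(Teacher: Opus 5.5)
Your route is the paper's own. You expand $\Delta_{\mathcal{L}_n}$ by (\ref{eq:alexander}) and note that every monomial is a power of $T$, with exponent $k_v=w(\mathfrak{S}[n,v])/d_n'$, using $d_n'=\prod_{a\in V_n}d_{n,a}$. Writing $C_u(t)$ for the factor of a node $u$ together with its ends, you then recognize $C_u(t)$ as $\Delta_u(t^{m_{n,u}})$, divided by $1-t^{m_{n,u}}$ when $u\in\mathcal{N}_{\mathrm{e}}(\mathfrak{S})$. Your first step and the case $n\in\mathcal{N}_{\mathrm{e}}(\mathfrak{S}_0)$ are correct.

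The case $n\in\mathcal{N}_{\mathrm{n}}(\mathfrak{S}_0)$, however, rests on a misreading of $\mathcal{L}_n'$, and the compensation you assert there does not hold as you set it up. The paper obtains $\mathcal{L}_n'$ by \emph{deleting} the other $|V_n|-1$ arrow-headed legs, not by turning them into ordinary legs of $n$.

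With your reading, let $a_0$ be the kept arrow and $\alpha_{a_0}=\prod_{b\in V_n\setminus\{a_0\}}d_{n,b}$. Every exponent becomes $m_{a_0}(E_v^*)=\alpha_{a_0}k_v$, and each converted leg $b$ contributes $(1-t^{\alpha_{a_0}\omega_nd_n/d_{n,b}})^{-1}$. After the correction factor you get
\[
\prod_{u}C_u(t^{\alpha_{a_0}})\cdot\prod_{b\in V_n\setminus\{a_0\}}\frac{1-t^{\omega_nd_n}}{1-t^{\alpha_{a_0}\omega_nd_n/d_{n,b}}},
\]
which is not $\prod_u C_u(t)$ and in general depends on the choice of $a_0$.

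For $|V_n|=2$ this is the correct answer with $t$ replaced by $t^{d_{n,b}}$. In Example \ref{ex:Splice3:0}, keeping the arrow of weight $3$ at $E_3$ yields $\frac{(1-t^{20})(1-t^{24})}{(1-t^{8})(1-t^{12})}$ instead of $\frac{(1-t^{5})(1-t^{6})}{(1-t^{2})(1-t^{3})}$. For $|V_n|\ge 3$ the converted-leg factors in general do not even cancel against $(1-t^{\omega_nd_n})^{|V_n|-1}$. The independence from the weight $d_{n,a_0}$ of the kept arrow does not rescue this, since the spoiling weights are those of the converted legs.

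With the correct reading the bookkeeping is immediate:
\begin{itemize}
\item The weights $d_{n,b}$, $b\neq a_0$, disappear from the diagram, so $m_{a_0}(E_v^*)=k_v$ exactly; in particular $m_{a_0}(E_n^*)=\omega_nd_n$.
\item No new end factors arise.
\item The valency of $n$ drops to $\delta_n-|V_n|+1$.
\end{itemize}
Hence
\[
\Delta_{\mathcal{L}_n'}(t)=(1-t)\,(1-t^{\omega_nd_n})^{-(|V_n|-1)}\prod_{u\in\mathcal{N}(\mathfrak{S}_0(n))}C_u(t).
\]
The correction factor $(1-t^{\omega_nd_n})^{|V_n|-1}/(1-t)$ restores the exponent $\delta_n-2$ at $n$ and removes the one-component factor. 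This gives $\prod_u C_u(t)$, as claimed.
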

                \begin{proof}
                    For each $u\in\mathcal{N}(\mathfrak{S}_0(n))$ denote the component of the node $u$ by
                    \[
                        C_u(t) := (1-t^{w(\mathfrak{S}[n,u])/d_n'})^{\delta_u-2}\prod_{\substack{e\sim u\\e\in\mathcal{E}(\mathfrak{S})}}(1-t^{w(\mathfrak{S}[n,e])/d_n'})^{-1}.
                    \]
                    Then we can rewrite $\Delta_{\mathfrak{S}_0(n)}(t)$ as
                    \[
                        \Delta_{\mathfrak{S}_0(n)}(t) = \prod_{u\in\mathcal{N}(\mathfrak{S}_0(n))}C_u(t)\cdot\begin{cases}
                            (1-t)& \ \mbox{if} \ \ n\in\mathcal{N}_{\mathrm{e}}(\mathfrak{S}_0)\\
                            1& \ \mbox{if} \ \ n\in\mathcal{N}_{\mathrm{n}}(\mathfrak{S}_0)
                        \end{cases}\ .
                    \]
                    Note that $w(\mathfrak{S}[n,u])/d_n' =  \omega_u m_{n,u}$, and $w(\mathfrak{S}[n,e])/d_n' =  \frac{\omega_u}{d_{u,e}} m_{n,u} = \omega_em_{n,u}$.
                    Therefore $C_u(t) = \Delta_u(t^{m_{n,u}})$ if $u\in\mathcal{N}_{\mathrm{n}}^{\mathfrak{S}_0}$, and $C_u(t) = \Delta_u(t^{m_{n,u}})/(1-t^{m_{n,u}})$ if $u\in\mathcal{N}_{\mathrm{e}}^{\mathfrak{S}_0}$.
                \end{proof}
            \begin{example}\label{ex:Splice3:0}
                Consider the splice diagram $\mathfrak{S}$ as given in left side of Figure \ref{fig:OverSplice3}, with $\mathfrak{S}_0$ given by the nodes $E_2,E_3,E_4$. Note that $\mathcal{N}_{\mathrm{e}}(\mathfrak{S}_0) = \{E_2,E_4\}$. Then the splice diagrams $\mathfrak{S}_0(2)$, $\mathfrak{S}_0(3)$, $\mathfrak{S}_0(4)$ with arrows on the cut edges, corresponding to $\mathcal{L}_2$, $\mathcal{L}_3$, $\mathcal{L}_4$, in this order, are given on the right hand side of the same figure.
                The normalized Alexander polynomials are given by
                \begin{align*}
                    \Delta_{\mathfrak{S}_0(2)}(t)&= (1-t)\cdot \frac{1-t^2}{1-t}\cdot \frac{1-t^{12}}{(1-t^4)(1-t^6)} = (1-t)\Delta_2(t)\frac{\Delta_1(t^2)}{1-t^2},\\
                    \Delta_{\mathfrak{S}_0(3)}(t)&= (1-t^5)\cdot \frac{1-t^{6}}{(1-t^2)(1-t^3)} = \Delta_3(t^5)\frac{\Delta_5(t)}{1-t},\\
                    \Delta_{\mathfrak{S}_0(4)}(t)&= (1-t)\cdot \frac{1-t^2}{1-t}\cdot \frac{1-t^{12}}{(1-t^4)(1-t^6)} = (1-t)\Delta_4(t)\frac{\Delta_6(t^2)}{1-t^2}.
                \end{align*}
                \begin{figure}
                    \centering
                    \renewcommand{\graphscale}{1.25cm}
                    \begin{tikzpicture}[roundnode/.style={circle, draw=black, fill=black, very thick, inner sep = 1},line width=1,x=\graphscale,y=\graphscale]
                        \node[roundnode] (3) {};
                        \path (3)++(1,0) node[roundnode] (4) {};
                        \path (3)++(0,-1) node[roundnode] (5) {};
                        \path (3)++(-1,0) node[roundnode] (2) {};
                        \path (4)++(1,0) node[roundnode] (6) {};
                        \path (4)++(0,1) node[roundnode] (12) {};
                        \path (5)++(-60:1) node[roundnode] (11) {};
                        \path (5)++(-120:1) node[roundnode] (10) {};
                        \path (2)++(-1,0) node[roundnode] (1) {};
                        \path (2)++(0,1) node[roundnode] (9) {};
                        \path (6)++(60:1) node[roundnode] (13) {};
                        \path (6)++(-60:1) node[roundnode] (14) {};
                        \path (1)++(120:1) node[roundnode] (7) {};
                        \path (1)++(-120:1) node[roundnode] (8) {};
                        \draw (7)--(1)--(8) (1)--(6)--(14) (2)--(9) (10)--(5)--(11) (5)--(3) (4)--(12) (6)--(13);
                        
                        \begin{scriptsize}
                            \renewcommand{\r}{0.25}
                            \foreach \vertex/\angle/\weight in {%
                            1/-35/181,1/-150/3,1/150/2,
                            2/-145/1,2/120/2,2/35/15,
                            3/150/3,3/35/4,3/-120/5,
                            4/145/9,4/-35/1,4/65/2,
                            5/140/17,5/-150/2,5/-30/3,
                            6/-145/109,6/30/2,6/-30/3
                            }{
                            \path (\vertex) ++ (\angle:\r) node {$\weight$};
                            }
                            \renewcommand{\r}{0.3}
                            \foreach \vertex/\angle in {%
                                1/60,2/-80,3/-50,5/50,4/-120,6/120
                            }{
                                \path (\vertex) ++ (\angle:\r) node {$E_{\vertex}$};
                            }
                        \end{scriptsize}
                    \end{tikzpicture}
                    \hspace{1cm}
                    \begin{tikzpicture}[roundnode/.style={circle, draw=black, fill=black, very thick, inner sep = 1},line width=1,x=\graphscale,y=\graphscale]
                        \node[roundnode] (3) {};
                        \path (3)++(1.5,0) node[roundnode] (4) {};
                        \path (3)++(0,-1) node[roundnode] (5) {};
                        \path (3)++(-1.5,0) node[roundnode] (2) {};
                        \path (4)++(1,0) node[roundnode] (6) {};
                        \path (4)++(0,1) node[roundnode] (12) {};
                        \path (5)++(-60:1) node[roundnode] (11) {};
                        \path (5)++(-120:1) node[roundnode] (10) {};
                        \path (2)++(-1,0) node[roundnode] (1) {};
                        \path (2)++(0,1) node[roundnode] (9) {};
                        \path (6)++(60:1) node[roundnode] (13) {};
                        \path (6)++(-60:1) node[roundnode] (14) {};
                        \path (1)++(120:1) node[roundnode] (7) {};
                        \path (1)++(-120:1) node[roundnode] (8) {};
                        \draw (7)--(1)--(8) (1)--(2)--(9) (10)--(5)--(11) (5)--(3) (6)--(4)--(12) (14)--(6)--(13);
                        \draw[->] (2)-++(0.5,0);
                        \draw[->] (3)-++(-0.5,0);
                        \draw[->] (3)-++(0.5,0);
                        \draw[->] (4)-++(-0.5,0);
                        \begin{scriptsize}
                            \renewcommand{\r}{0.25}
                            \foreach \vertex/\angle/\weight in {%
                            1/-35/181,1/-150/3,1/150/2,
                            2/-145/1,2/120/2,2/35/15,
                            3/150/3,3/35/4,3/-120/5,
                            4/145/9,4/-35/1,4/65/2,
                            5/140/17,5/-150/2,5/-30/3,
                            6/-145/109,6/30/2,6/-30/3
                            }{
                            \path (\vertex) ++ (\angle:\r) node {$\weight$};
                            }
                            \renewcommand{\r}{0.3}
                            \foreach \vertex/\angle in {%
                                1/60,2/-80,3/-50,5/50,4/-120,6/120
                            }{
                                \path (\vertex) ++ (\angle:\r) node {$E_{\vertex}$};
                            }
                        \end{scriptsize}
                    \end{tikzpicture}
                    \caption{}
                    \label{fig:OverSplice3}
                \end{figure}
            \end{example}
            \subsection{Surgery formula for the over-reduced zeta functions}\label{ss:5.3}
            Similarly as in the previous sections, we fix a complete sub-diagram $\mathfrak{S}_0$ in $\mathfrak{S}$. Our goal is to prove a surgery formula
            for the zeta function $f_\Gamma$ reduced to the variables 
            $\mathbf{t}_{\mathcal{N}(\mathfrak{S}_0)}$. 
             \begin{Rem}
                 In the next definitions, lemmas and discussions, the indexing of the diagrams 
                (eg. $\mathfrak{S}_0, \mathfrak{S}_3$, etc) might be strange. However, it is controlled 
                by the following precise principle. $\mathfrak{S}_0$ is always the fixed complete sub-diagram as above.
                The other indices are guided by the notations of  Lemma \ref{Lem:reverseIE}, where different sub-diagrams appear, e.g.  $\mathfrak{S}_1$, $\mathfrak{S}_2$, $\mathfrak{S}_3$. This  lemma is a key step
                in the proof of the surgery formula for the polynomial $\operatorname{Pol}_{\Gamma}(\mathbf{t}_{\mathcal{N}(\mathfrak{S}_0)})$, basically, it organizes all the other steps.
                Keeping its notations we can follow easier the different steps and substitutions. 
            \end{Rem}
            
            For any peel $\mathfrak{S}_3\hookrightarrow \mathfrak{S}_0$ we define the 
            \emph{full extension} in $\mathfrak{S}$ of $\mathfrak{S}_3$ as the component of $\mathfrak{S}\setminus(\mathfrak{S}_0\setminus\mathfrak{S}_3)$ that has non-empty intersection with $\mathfrak{S}_0$ (equivalently that contains $\mathfrak{S}_3$). The full extension of $\mathfrak{S}_3$ is denoted by $\overline{\mathfrak{S}_3}$. Note that $\overline{\mathfrak{S}_0} = \mathfrak{S}$. Additionally, for any $\mathfrak{S}_3\hookrightarrow\mathfrak{S}_0$ denote by $\overline{\mathfrak{S}_3}'$ the complete sub-diagram given by $\mathcal{N}(\overline{\mathfrak{S}_3}') = \mathcal{N}(\overline{\mathfrak{S}_3})\cup\mathcal{N}(\mathfrak{S}_0)$. Observe that $\overline{\mathfrak{S}_3}\hookrightarrow\overline{\mathfrak{S}_3}'$.

            Denote $\Gamma_0 :=\Gamma(\mathfrak{S}_0)$ and let $U_n = \frac{1}{\omega_{n,\mathfrak{S}_0}}\restr{E_{n,\Gamma_0}^*}{\mathcal{N}(\mathfrak{S}_0)}$, where $\omega_{n,\mathfrak{S}_0}$ is the product of the leg determinants $d_{n,e}$ around the node $n$ in $\mathfrak{S}_0$.  Note that $d_n'\omega_{n,\mathfrak{S}_0} = w(\mathfrak{S}[n,n])$ for all $n\in\mathcal{N}(\mathfrak{S}_0)$.
            \begin{Not}
                In the statements of Theorem \ref{Thm:oRedZetaSurgery} and Theorem \ref{Thm:oRedPolSurgery} if $\mathfrak{S}_0$ has exactly two nodes, we extend "$\hookrightarrow$" to include the empty graph too, denoted by $\emptyset$. This way we treat these two cases: $|\mathfrak{S}_0|=2$ and $|\mathfrak{S}_0|>2$, simultaneously. To do this we set  $(\varphi_{\overline{\emptyset},\overline{\emptyset}'}^*f_{\Gamma(\overline{\emptyset})})|_{\mathcal{N}(\mathfrak{S}_0)}(\mathbf{t}_{\mathcal{N}(\mathfrak{S}_0)}) := \prod_{n\in\mathcal{N}(\mathfrak{S}_0)}(1-\mathbf{t}^{U_n})^{-1}$ 
                and   $(\varphi_{\overline{\emptyset},\overline{\emptyset}'}^*\operatorname{Pol}_{\Gamma(\overline{\emptyset})})(\mathbf{t}_{\mathcal{N}(\mathfrak{S}_0)}):=0$. Note that 
                \[
                    \operatorname{PolPart}_{\mathcal{N}(\mathfrak{S}_0)}\big((\varphi_{\overline{\emptyset},\overline{\emptyset}'}^*f_{\Gamma(\overline{\emptyset})})|_{\mathcal{N}(\mathfrak{S}_0)}(\mathbf{t}_{\mathcal{N}(\mathfrak{S}_0)})\big) = 0 = (\varphi_{\overline{\emptyset},\overline{\emptyset}'}^*\operatorname{Pol}_{\Gamma(\overline{\emptyset})})(\mathbf{t}_{\mathcal{N}(\mathfrak{S}_0)}).
                \]
            \end{Not}

            The surgery formula for the over-reduced rational function $f_{\Gamma}(\mathbf{t}_{\mathcal{N}(\mathfrak{S}_0)})$ using the rational functions $\big(\varphi_{\overline{\mathfrak{S}_3},\overline{\mathfrak{S}_3}'}^*f_{\Gamma(\overline{\mathfrak{S}_3})}\big)|_{\mathcal{N}(\mathfrak{S}_0)}(\mathbf{t}_{\mathcal{N}(\mathfrak{S}_0)})$ 
            (for all $\mathfrak{S}_3\hookrightarrow \mathfrak{S}$) is the following.
            \begin{Thm}\label{Thm:oRedZetaSurgery}
                \begin{align}\label{eq:oRedZetaSugery}
                    \sum_{\mathfrak{S}_3\hookrightarrow\mathfrak{S}_0}(-1)^{|\mathfrak{S}_0|-|\mathfrak{S}_3|}&\big(\varphi_{\overline{\mathfrak{S}_3},\overline{\mathfrak{S}_3}'}^*f_{\Gamma(\overline{\mathfrak{S}_3})}\big)|_{\mathcal{N}(\mathfrak{S}_0)}(\mathbf{t}_{\mathcal{N}(\mathfrak{S}_0)})=\\ &= (-1)^{|\mathcal{N}_{\mathrm{e}}(\mathfrak{S}_0)|}\prod_{n\in\mathcal{N}_{\mathrm{n}}(\mathfrak{S}_0)}\Delta_{\mathfrak{S}_0(n)}(\mathbf{t}^{U_n})\prod_{n\in\mathcal{N}_{\mathrm{e}}(\mathfrak{S}_0)}\frac{1-\Delta_{\mathfrak{S}_0(n)}(\mathbf{t}^{U_n})}{1-\mathbf{t}^{U_n}}.\nonumber
                \end{align}
            \end{Thm}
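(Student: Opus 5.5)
The plan is to mimic the proof of Theorem \ref{Thm:zetaSurgery:ZHS}: first write every summand on the left-hand side as a product of one-variable ``atoms'' evaluated at common monomials $u_n:=\mathbf{t}^{U_n}$, $n\in\mathcal{N}(\mathfrak{S}_0)$; then apply the same formal inclusion--exclusion identity over the end-nodes of $\mathfrak{S}_0$.

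\textbf{Step 1: the summands in terms of atoms.} Fix a peel $\mathfrak{S}_3\hookrightarrow\mathfrak{S}_0$. The nodes of $\overline{\mathfrak{S}_3}'$ are $\mathcal{N}(\overline{\mathfrak{S}_3})\cup\mathcal{N}(\mathfrak{S}_0)$, and $\overline{\mathfrak{S}_3}$ is obtained from it by peeling the end-nodes $\mathcal{N}(\mathfrak{S}_0)\setminus\mathcal{N}(\mathfrak{S}_3)$. By (\ref{eq:reducedzeta_xn}) applied to the pair $(\overline{\mathfrak{S}_3},\overline{\mathfrak{S}_3}')$ we get
\[
\varphi^*f_{\Gamma(\overline{\mathfrak{S}_3})}=\prod_{u}\Delta_u(x_u)\prod_{u\ \text{end-node of}\ \overline{\mathfrak{S}_3}}\frac{1}{1-x_u}\prod_{n\in\mathcal{N}(\mathfrak{S}_0)\setminus\mathcal{N}(\mathfrak{S}_3)}\frac{1}{1-x_n}.
\]
Here $u$ runs over $\mathcal{N}(\overline{\mathfrak{S}_3})$, and $x_u=\mathbf{t}^{X_u}$ is computed in $\Gamma(\overline{\mathfrak{S}_3}')$. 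Note that the $\Delta_u$ are the same atoms as for $\Gamma(\mathfrak{S})$, since the weights around $u$ are unchanged.

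Next I restrict to $\mathbf{t}_{\mathcal{N}(\mathfrak{S}_0)}$. Using the formula $m_v(E_u^*)=w(\mathfrak{S}[v,u])$ for $v\in\mathcal{N}(\mathfrak{S}_0)$, I would check two things:
\begin{itemize}
\item for $u\in\mathcal{N}(\mathfrak{S}_0(n))$ we have $x_u|_{\mathcal{N}(\mathfrak{S}_0)}=u_n^{m_{n,u}}$;
\item for $n\in\mathcal{N}(\mathfrak{S}_0)\setminus\mathcal{N}(\mathfrak{S}_3)$ we have $x_n|_{\mathcal{N}(\mathfrak{S}_0)}=u_n$.
\end{itemize}
For the first, the path from any $v\in\mathcal{N}(\mathfrak{S}_0)$ to $u$ passes through $n$, so $w(\mathfrak{S}[v,u])/\omega_u=w(\mathfrak{S}[v,n])\,m_{n,u}/(d_n\omega_n)$, and the factor $d_n'$ is designed exactly to absorb this. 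The second is the computation in Lemma \ref{lem:varRestr}. This also has to respect the change of variables $\varphi_{\overline{\mathfrak{S}_3},\overline{\mathfrak{S}_3}'}$ (Lemma \ref{lem:varRestr}, Lemma \ref{Lem:legRestrs}).

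Grouping the atoms by the pieces $\mathfrak{S}_0(n)$ and applying Lemma \ref{Lem:AlexanderMultipleFromSingle}, the summand becomes
\[
\prod_{n\in\mathcal{N}_{\mathrm{n}}(\mathfrak{S}_0)}\Delta_{\mathfrak{S}_0(n)}(u_n)\prod_{n\in\mathcal{N}(\mathfrak{S}_3)\cap\mathcal{N}_{\mathrm{e}}(\mathfrak{S}_0)}\frac{\Delta_{\mathfrak{S}_0(n)}(u_n)}{1-u_n}\prod_{n\in\mathcal{N}_{\mathrm{e}}(\mathfrak{S}_0)\setminus\mathcal{N}(\mathfrak{S}_3)}\frac{1}{1-u_n}.
\]
This is the analogue of the identity used in the proof of Theorem \ref{Thm:zetaSurgery:ZHS}. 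It needs some care for non-end nodes $n$ of $\mathfrak{S}_0$ that are end-nodes of $\mathfrak{S}$ or of $\overline{\mathfrak{S}_3}$. There the extra $1/(1-x)$ factors and the factor $(1-t^{\omega_nd_n})^{|V_n|-1}/(1-t)$ in the definition of $\Delta_{\mathfrak{S}_0(n)}$ must match, and the $1/(1-t)$ correction of $\Delta_{\mathcal{L}'_n}$ should account for this. The case $\mathfrak{S}_3=\emptyset$ is covered by the Notation convention.

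\textbf{Step 2: inclusion--exclusion.} Set $V_n=1/(1-u_n)$ and $D_n=\Delta_{\mathfrak{S}_0(n)}(u_n)/(1-u_n)$ for $n\in\mathcal{N}_{\mathrm{e}}(\mathfrak{S}_0)$. Peels correspond to subsets $I\subseteq\mathcal{N}_{\mathrm{e}}(\mathfrak{S}_0)$, with sign $(-1)^{|\mathcal{N}_{\mathrm{e}}(\mathfrak{S}_0)|-|I|}$. The sum is therefore
\[
Z\sum_I(-1)^{|\mathcal{N}_{\mathrm{e}}(\mathfrak{S}_0)\setminus I|}\prod_{I}D_n\prod_{\notin I}V_n=(-1)^{|\mathcal{N}_{\mathrm{e}}(\mathfrak{S}_0)|}Z\prod(V_n-D_n).
\]
Since $V_n-D_n=(1-\Delta_{\mathfrak{S}_0(n)}(u_n))/(1-u_n)$, this gives the right-hand side of the theorem.

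\textbf{Main obstacle.} The hard part is Step 1: the exponent bookkeeping showing that $\varphi^*_{\overline{\mathfrak{S}_3},\overline{\mathfrak{S}_3}'}$ followed by restriction sends every $x_u$ to the correct power of $u_n$, uniformly in $\mathfrak{S}_3$. I would attack it with the explicit path-weight formula for $m_v(E_u^*)$ in $\mathfrak{S}$, and with the observation that the weights along paths inside $\mathfrak{S}_0$ are unchanged in $\overline{\mathfrak{S}_3}'$.
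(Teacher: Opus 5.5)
Your proposal is correct and follows the paper's route. Step 1 is exactly Lemma \ref{Lem:overreducedZetaExpression}: write the summand via (\ref{eq:reducedzeta_xn}) for $\Gamma(\overline{\mathfrak{S}_3}')$, use $w(\mathfrak{S}[v,u])=w(\mathfrak{S}[v,n])\,w(\mathfrak{S}[n,u])/w(\mathfrak{S}[n,n])$ to get $X_u|_{\mathcal{N}(\mathfrak{S}_0)}=m_{n,u}U_n$ and $X_n|_{\mathcal{N}(\mathfrak{S}_0)}=U_n$, group over the disjoint pieces $\mathfrak{S}_0(n)$, and apply Lemma \ref{Lem:AlexanderMultipleFromSingle}. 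Step 2 is the same inclusion--exclusion. One small correction: the factors $1/(1-x_u)$ come from end-nodes of $\overline{\mathfrak{S}_3}'$, not of $\overline{\mathfrak{S}_3}$; inside $\overline{\mathfrak{S}_3}$ these are exactly the end-nodes of $\mathfrak{S}$, since node valencies are unchanged there. So the extra `care' you anticipate is unnecessary, and Lemma \ref{Lem:legRestrs} is not needed because (\ref{eq:reducedzeta_xn}) already absorbs the change of variables.
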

            The proof of Theorem \ref{Thm:oRedZetaSurgery} becomes a straightforward calculation, once we show a similar decomposition of $\big(\varphi_{\overline{\mathfrak{S}_3},\overline{\mathfrak{S}_3}'}^*f_{\Gamma(\overline{\mathfrak{S}_3})}\big)|_{\mathcal{N}(\mathfrak{S}_0)}(\mathbf{t}_{\mathcal{N}(\mathfrak{S}_0)})$ (in Lemma \ref{Lem:overreducedZetaExpression}) as the one given in Lemma \ref{lem:induced_expr}.
            \begin{Rem}
                Note that in the proof of the following Lemma \ref{Lem:overreducedZetaExpression} we also prove that $\overline{\mathfrak{S}_3}\subseteq\mathfrak{S}$ for $\mathfrak{S}_3\hookrightarrow\mathfrak{S}_0$ is given by $\mathcal{N}(\overline{\mathfrak{S}_3}) = \bigsqcup_{n\in\mathcal{N}(\mathfrak{S}_3)}\mathcal{N}(\mathfrak{S}_0(n))$.   
            \end{Rem}
            \begin{Lem}\label{Lem:overreducedZetaExpression}
                Fix an arbitrary $\mathfrak{S}_3\hookrightarrow\mathfrak{S}_0$, and let $I:=\mathcal{N}_{\mathrm{e}}(\mathfrak{S}_0)\cap\mathcal{N}(\mathfrak{S}_3)$. Then
                \[
                    \big(\varphi_{\overline{\mathfrak{S}_3},\overline{\mathfrak{S}_3}'}^*f_{\Gamma(\overline{\mathfrak{S}_3})}\big)|_{\mathcal{N}(\mathfrak{S}_0)}(\mathbf{t}_{\mathcal{N}(\mathfrak{S}_0)}) = \prod_{n\in\mathcal{N}_{\mathrm{n}}(\mathfrak{S}_0)}\Delta_{\mathfrak{S}_0(n)}(\mathbf{t}^{U_n})\prod_{n\in I}\frac{\Delta_{\mathfrak{S}_0(n)}(\mathbf{t}^{U_n})}{1-\mathbf{t}^{U_n}}\prod_{n\in \mathcal{N}_{\mathrm{e}}(\mathfrak{S}_0)\setminus I}\frac{1}{1-\mathbf{t}^{U_n}}.
                \]
            \end{Lem}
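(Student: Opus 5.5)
The plan is to reduce everything to the already established peel formula (\ref{eq:reducedzeta_xn}) applied to the pair $\overline{\mathfrak{S}_3}\hookrightarrow\overline{\mathfrak{S}_3}'$, and then restrict to $\mathcal{N}(\mathfrak{S}_0)$.

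\textbf{Step 1: identify the peel.} First I will check that $\mathcal{N}(\overline{\mathfrak{S}_3})=\bigsqcup_{n\in\mathcal{N}(\mathfrak{S}_3)}\mathcal{N}(\mathfrak{S}_0(n))$. Indeed, a node of $\mathfrak{S}$ outside $\mathfrak{S}_0$ hangs off a unique node $n$ of $\mathfrak{S}_0$, namely the first node of $\mathfrak{S}_0$ on the path to it. This node lies in the full extension precisely when $n\in\mathcal{N}(\mathfrak{S}_3)$.

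Next I will check that $\overline{\mathfrak{S}_3}$ is a peel of $\overline{\mathfrak{S}_3}'$, with removed end-nodes exactly $\mathcal{N}_{\mathrm{e}}(\mathfrak{S}_0)\setminus I$. Each such $n$ is adjacent to a unique node $v_n\in\mathcal{N}(\mathfrak{S}_0)\cap\mathcal{N}(\mathfrak{S}_3)$. Its subtrees $\mathfrak{S}_0(n)\setminus\{n\}$ are not in $\overline{\mathfrak{S}_3}'$, so $n$ is an end-node there. All other nodes are non-end-nodes or lie in $I$.

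\textbf{Step 2: apply the peel formula.} By (\ref{eq:reducedzeta_xn}) applied to $\Gamma(\overline{\mathfrak{S}_3}')$, the function $\varphi^*f_{\Gamma(\overline{\mathfrak{S}_3})}$ in the variables $\mathbf{t}_{\mathcal{N}(\overline{\mathfrak{S}_3}')}$ equals
\[
\prod_{u\in\mathcal{N}(\overline{\mathfrak{S}_3})}(\text{factor of }u\text{ in }x_u)\cdot\prod_{n\in\mathcal{N}_{\mathrm{e}}(\mathfrak{S}_0)\setminus I}\frac{1}{1-x_n},
\]
where $x_u=\mathbf{t}^{X_u}$. The $X_u$ are taken with respect to $\Gamma(\overline{\mathfrak{S}_3}')$. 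The factor of $u$ is $\Delta_u$ for the non-end nodes of $\overline{\mathfrak{S}_3}'$ and $\Delta_u/(1-x_u)$ for its end-nodes.

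I must be careful that the atoms $\Delta_u$ are computed in $\overline{\mathfrak{S}_3}'$, not in $\mathfrak{S}$. A node $u$ outside $\mathfrak{S}_0$ has the same neighbourhood, hence the same $\Delta_u$ and the same end/non-end status as in $\mathfrak{S}$. A node of $\mathfrak{S}_0$ may have extra legs in $\overline{\mathfrak{S}_3}'$, which replace cut edges. So its atom is a local Alexander factor rather than the $\Delta_u$ of $\mathfrak{S}$, and this must be matched with the $(1-t)$ and $(1-t^{\omega_nd_n})^{|V_n|-1}/(1-t)$ normalizations in the definition of $\Delta_{\mathfrak{S}_0(n)}$.

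\textbf{Step 3: restrict to $\mathcal{N}(\mathfrak{S}_0)$.} Restricting $X_u$ to $\mathcal{N}(\mathfrak{S}_0)$ gives, for $u\in\mathcal{N}(\mathfrak{S}_0(n))$, the cycle $\restr{X_u}{\mathcal{N}(\mathfrak{S}_0)}=m_{n,u}U_n$. This follows from the splice formula $m_v(E_u^*)=w(\mathfrak{S}[v,u])$. The path from $v\in\mathcal{N}(\mathfrak{S}_0)$ to $u$ passes through $n$, so
\[
w(\mathfrak{S}[v,u])=w(\mathfrak{S}[v,n])\cdot\frac{w(\mathfrak{S}[n,u])}{d_n'},
\]
after accounting for the weights at $n$. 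In addition, $U_n$ is proportional to $E^*_{n,\Gamma_0}$ restricted, with ratio $d_n'\omega_{n,\mathfrak{S}_0}=w(\mathfrak{S}[n,n])$.

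With this, the product over $u\in\mathcal{N}(\mathfrak{S}_0(n))$ becomes, by Lemma \ref{Lem:AlexanderMultipleFromSingle}, $\Delta_{\mathfrak{S}_0(n)}(\mathbf{t}^{U_n})$ for $n\in\mathcal{N}_{\mathrm{n}}(\mathfrak{S}_0)$. For $n\in I$ it becomes $\Delta_{\mathfrak{S}_0(n)}(\mathbf{t}^{U_n})/(1-\mathbf{t}^{U_n})$; the extra $1-t$ of that lemma cancels the $1/(1-x_n)$ coming from $n$ being an end-node of $\overline{\mathfrak{S}_3}'$. For $n\in\mathcal{N}_{\mathrm{e}}(\mathfrak{S}_0)\setminus I$, the restriction of $x_n$ is $\mathbf{t}^{U_n}$ with $m_{n,n}=1$.

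The restriction is legitimate because $\varphi^*$ commutes with setting $t_v=1$ for $v\notin\mathcal{N}(\mathfrak{S}_0)$. I will verify this by applying Lemma \ref{lem:varRestr} to the coordinates in $\mathcal{N}(\mathfrak{S}_0)$ that are retained.

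The main obstacle is the exponent bookkeeping in Step 3, together with the Step 2 mismatch between the atoms of $\overline{\mathfrak{S}_3}'$ and those of $\mathfrak{S}$ at the nodes of $\mathfrak{S}_0$. I would first test both on Example \ref{ex:Splice3:0}.
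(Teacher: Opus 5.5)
Your strategy matches the paper's proof. You apply (\ref{eq:reducedzeta_xn}) to the peel $\overline{\mathfrak{S}_3}\hookrightarrow\overline{\mathfrak{S}_3}'$, use the decomposition $\mathcal{N}(\overline{\mathfrak{S}_3})=\bigsqcup_{n\in\mathcal{N}(\mathfrak{S}_3)}\mathcal{N}(\mathfrak{S}_0(n))$, prove $X_u|_{\mathcal{N}(\mathfrak{S}_0)}=m_{n,u}U_n$, and finish with Lemma \ref{Lem:AlexanderMultipleFromSingle}.

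However, the one real computation, in Step 3, is wrong as written. Count the weights at $n$ for $u,v\neq n$. In $w(\mathfrak{S}[v,u])$ the node $n$ contributes $w(\mathfrak{S}[n,n])/(d_{n,u}d_{n,v})$, whereas in $w(\mathfrak{S}[v,n])\,w(\mathfrak{S}[n,u])$ it contributes $w(\mathfrak{S}[n,n])^2/(d_{n,u}d_{n,v})$. So the correct identity is
\[
w(\mathfrak{S}[v,u])=\frac{w(\mathfrak{S}[v,n])\,w(\mathfrak{S}[n,u])}{w(\mathfrak{S}[n,n])},\qquad w(\mathfrak{S}[n,n])=d_n'\,d_n\,\omega_n ,
\]
and not division by $d_n'$. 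For a check, take Example \ref{ex:Splice3:1} with $v=E_2$, $n=E_3$, $u=E_5$: then $40\cdot 72/60=48=\omega_5\,m_2(X_5)$, whereas $40\cdot 72/12=240$.

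Likewise, $U_n$ is $E^*_{n,\Gamma_0}|_{\mathcal{N}(\mathfrak{S}_0)}$ divided by $\omega_{n,\mathfrak{S}_0}$, not by $w(\mathfrak{S}[n,n])$. The identity $d_n'\omega_{n,\mathfrak{S}_0}=w(\mathfrak{S}[n,n])$ enters only at the very end:
\[
m_v(X_u)=\frac{w(\mathfrak{S}[n,u])}{\omega_u}\cdot\frac{w(\mathfrak{S}[v,n])}{w(\mathfrak{S}[n,n])}=m_{n,u}\,\frac{w(\mathfrak{S}[v,n])}{\omega_{n,\mathfrak{S}_0}}=m_{n,u}\,m_v(U_n).
\]
With your factorization the two sides differ by a constant factor, so the step does not close.

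The obstacle you leave open in Step 2 does not exist. Every $u\in\mathcal{N}(\overline{\mathfrak{S}_3})$, including $n\in\mathcal{N}(\mathfrak{S}_0)\cap\mathcal{N}(\mathfrak{S}_3)$, has all its $\mathfrak{S}$-neighbours inside $\overline{\mathfrak{S}_3}'$, since a neighbour of $n$ lies in $\mathfrak{S}_0$ or in $\mathfrak{S}_0(n)$. Hence its legs, $\omega_u$, $\Delta_u$ and end/non-end status are the same in $\overline{\mathfrak{S}_3}'$ as in $\mathfrak{S}$. Path weights between nodes of $\overline{\mathfrak{S}_3}'$ also agree with those in $\mathfrak{S}$.

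Only the nodes of $J:=\mathcal{N}_{\mathrm{e}}(\mathfrak{S}_0)\setminus I$ acquire new legs, and their atoms are removed by the peel; only $1/(1-x_n)$ survives. So the product from (\ref{eq:reducedzeta_xn}) is literally the product of the $\mathfrak{S}$-atoms in Lemma \ref{Lem:AlexanderMultipleFromSingle}. The normalizations in $\Delta_{\mathfrak{S}_0(n)}$ are already absorbed there, and nothing has to be matched by hand.

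Some smaller points follow from the same observation:
\begin{itemize}
\item Your justification for $n\in I$ is off. Such an $n$ is an end-node of $\overline{\mathfrak{S}_3}'$ only if $\mathfrak{S}_0(n)=\{n\}$, which fails for $E_2$ in Example \ref{ex:Splice3:1}. The product of atoms over $\mathfrak{S}_0(n)$ equals $\Delta_{\mathfrak{S}_0(n)}(t)/(1-t)$ directly by the lemma; no cancellation with an end-node factor of $n$ is involved.
\item For $n\in J$, the relevant fact is $\omega_{n,\Gamma(\overline{\mathfrak{S}_3}')}=\omega_n d_n=\omega_{n,\mathfrak{S}_0}$, which gives $X_n|_{\mathcal{N}(\mathfrak{S}_0)}=U_n$. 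The $m_{n,n}$ of the lemma equals $d_n$, which need not be $1$.
\item No commutation of $\varphi^*$ with the restriction is needed. Restricting simply replaces each $\mathbf{t}^{X_u}$ by $\mathbf{t}^{X_u|_{\mathcal{N}(\mathfrak{S}_0)}}$.
\end{itemize}
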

            \begin{proof}
                In this proof denote by $\Gamma_3:=\Gamma(\overline{\mathfrak{S}_3}')$, $J := \mathcal{N}_{\mathrm{e}}(\mathfrak{S}_0)\setminus I$ and for each $u\in\mathcal{N}(\Gamma_3)$ the cycles $X_u := \frac{1}{\omega_{u,\Gamma_3}}\restr{E_{u,\Gamma_3}^*}{\mathcal{N}(\Gamma_3)}$. By equations (\ref{eq:reducedzeta_xn}) and (\ref{eq:AlexanderSingleNode}) we can write that
                \[
                    \big(\varphi_{\overline{\mathfrak{S}_3},\overline{\mathfrak{S}_3}'}^*f_{\Gamma(\overline{\mathfrak{S}_3})}\big)(\mathbf{t}_{\mathcal{N}(\Gamma_3)})=\prod_{u\in\mathcal{N}_{\mathrm{n}}(\Gamma_3)}\Delta_u(\mathbf{t}^{X_u})\prod_{u\in\mathcal{N}_{\mathrm{e}}(\Gamma_3)\setminus J}\frac{\Delta_u(\mathbf{t}^{X_u})}{1-\mathbf{t}^{X_u}}\prod_{u\in J}\frac{1}{1-\mathbf{t}^{X_u}}.
                \]
                In {\bf Step 1} below we will show that $\mathcal{N}(\Gamma_3)\setminus J=\mathcal{N}(\overline{\mathfrak{S}_3}) = \bigsqcup_{n\in\mathcal{N}(\mathfrak{S}_3)}\mathcal{N}(\mathfrak{S}_0(n))$, as such, we can redistribute the above products as
                \[
                    \big(\varphi_{\overline{\mathfrak{S}_3},\overline{\mathfrak{S}_3}'}^*f_{\Gamma(\overline{\mathfrak{S}_3})}\big)(\mathbf{t}_{\mathcal{N}(\Gamma_3)})=\prod_{n\in\mathcal{N}(\mathfrak{S}_0)\setminus J}\Big(\prod_{u\in\mathcal{N}_{\mathrm{n}}^{\mathfrak{S}_0(n)}}\Delta_u(\mathbf{t}^{X_u})\prod_{u\in\mathcal{N}_{\mathrm{e}}^{\mathfrak{S}_0(n)}}\frac{\Delta_u(\mathbf{t}^{X_u})}{1-\mathbf{t}^{X_u}}\Big)\cdot\prod_{u\in J}\frac{1}{1-\mathbf{t}^{X_u}}.
                \]
                After which, due to Lemma \ref{Lem:AlexanderMultipleFromSingle} it suffices to show that $\restr{X_u}{\mathcal{N}(\mathfrak{S}_0)} = m_{n,u}U_n$ for each $u\in\mathfrak{S}_0(n)$ and $n\in\mathcal{N}(\mathfrak{S}_0)\setminus J$, and $\restr{X_n}{\mathcal{N}(\mathfrak{S}_0)} = U_n$ for $n\in J$ (see {\bf Step 2} below).

                {\bf Step 1.} We can write that
                \[
                    \mathcal{N}(\overline{\mathfrak{S}_3}) = \bigcup\Big\{\mathfrak{S}[u,v]\Big|u\in\mathcal{N}(\mathfrak{S}),v\in\mathcal{N}(\mathfrak{S}_3),\mathfrak{S}[u,v]\cap J=\emptyset\Big\}.
                \]
                Due to the tree-graph structure of $\mathfrak{S}$ and $\mathfrak{S}_0$ we can say that for any $u\in\mathcal{N}(\mathfrak{S})$, there is exactly one $n_u\in\mathcal{N}(\mathfrak{S}_0)$ such that $\mathfrak{S}[u,n_u]\cap\mathcal{N}(\mathfrak{S}_0) = \{n_u\}$. Note that for $u\in\mathcal{N}(\mathfrak{S}_0)$ we have $n_u=u$. With $\mathfrak{S}[u,v] = \mathfrak{S}[u,n_u]\cup\mathfrak{S}[n_u,v]$ and $\mathfrak{S}[u,v]\cap J =\mathfrak{S}[n_u,v]\cap J$ we can write that
                \[
                    \mathcal{N}(\overline{\mathfrak{S}_3}) = \bigcup\Big\{\mathfrak{S}[u,n_u]\Big|u\in\mathcal{N}(\mathfrak{S}),n_u\in\mathcal{N}(\mathfrak{S}_3),\mathfrak{S}[u,n_u]\cap J=\emptyset\Big\}.
                \]
                The condition $\mathfrak{S}[u,n_u]\cap J = \emptyset$ is equivalent to $\{n_u\}\cap J=\emptyset$, which is true for any $n_u\in\mathcal{N}(\mathfrak{S}_3)$. Thus we can further deduce that
                \[
                    \mathcal{N}(\overline{\mathfrak{S}_3}) = \bigcup\Big\{\mathfrak{S}[u,n]\Big|u\in\mathcal{N}(\mathfrak{S}),n\in\mathcal{N}(\mathfrak{S}_3),\mathfrak{S}[u,n]\cap\mathcal{N}(\mathfrak{S}_0)=\{n\}\Big\}.
                \]
                Bringing out $n\in \mathcal{N}(\mathfrak{S}_3)$ to under the union, we have
                \[
                    \mathcal{N}(\overline{\mathfrak{S}_3}) = \bigcup_{n\in\mathcal{N}(\mathfrak{S}_3)}\Big(\bigcup\Big\{\mathfrak{S}[u,n]\Big|u\in\mathcal{N}(\mathfrak{S}),\mathfrak{S}[u,n]\cap\mathcal{N}(\mathfrak{S}_0)=\{n\}\Big\}\Big)=\bigcup_{n\in\mathcal{N}(\mathfrak{S}_3)}\mathcal{N}(\mathfrak{S}_0(n)).
                \]
                Note that the sets $\{\mathcal{N}(\mathfrak{S}_0(n))\}_{n\in\mathcal{N}(\mathfrak{S}_0)}$ are pairwise disjoint. Otherwise for $n_1\neq n_2$ and $u \in \mathcal{N}(\mathfrak{S}_0(n_1))\cap \mathcal{N}(\mathfrak{S}_0(n_2))$ the set $\mathfrak{S}[u,n_1]\cup\mathfrak{S}[u,n_2]\cup\mathfrak{S}_0[n_1,n_2]$ would represent a cycle in $\mathfrak{S}$.
                
                {\bf Step 2.} Let $n\in\mathcal{N}(\mathfrak{S}_0)\setminus J$, and fix an arbitrary $v\in\mathcal{N}(\mathfrak{S}_0)$, and $u\in\mathcal{N}(\mathfrak{S}_0(n))$. Note that $n\in\mathfrak{S}[v,u]$. Recall that for $n\not\sim u$ the number $d_{n,u}$ is equal to $d_{n,w}$ where $n\sim w$ and $w\in\mathfrak{S}[n,u]$ (see section \ref{ss:splice}). Similarly for $d_{n,v}$.
                We can write that
                \begin{align*}
                    w(\mathfrak{S}[v,u]) &= w(\mathfrak{S}[v,n))\cdot w(\mathfrak{S}(n,u])\cdot\frac{w(\mathfrak{S}[n,n])}{d_{n,u}d_{n,v}}\\
                        &=\frac{w(\mathfrak{S}[v,n])}{w(\mathfrak{S}[n,n])/d_{n,v}}\cdot\frac{w(\mathfrak{S}[n,u])}{w(\mathfrak{S}[n,n])/d_{n,u}}\cdot\frac{w(\mathfrak{S}[n,n])}{d_{n,u}d_{n,v}}\\
                        &=\frac{1}{w(\mathfrak{S}[n,n])}w(\mathfrak{S}[v,n])w(\mathfrak{S}[n,u]).
                \end{align*}  
                Using this identity it follows that
                \begin{align*}
                    m_v(X_u) &= \frac{w(\mathfrak{S}[v,u])}{\omega_u} = m_{n,u}d_n'\cdot\frac{w(\mathfrak{S}[v,u])}{w(\mathfrak{S}[u,n])} \\&=m_{n,u} d_n'\frac{w(\mathfrak{S}[v,n])}{w(\mathfrak{S}[n,n])} = m_{n,u}\frac{w(\mathfrak{S}[v,n])}{\omega_{n,\mathfrak{S}_0}}= m_{n,u}m_v(U_n).
                \end{align*}
                For $n\in J\subseteq\mathcal{N}_{\mathrm{e}}(\mathfrak{S}_0)\cap\mathcal{N}_{\mathrm{e}}(\Gamma_3)$ and $v\in\mathcal{N}(\mathfrak{S}_0)\setminus\{n\}$ we have $ m_v(X_n) = w(\mathfrak{S}[v,n)) = m_v(U_n)$; otherwise $m_n(X_n) = d_{n,w_n} = m_n(U_n)$, where $w_n\sim n$ and $w_n\in\mathcal{N}(\mathfrak{S}_0)$.
            \end{proof}
            \begin{example}\label{ex:Splice3:1}
                Let us continue with the context of Example \ref{ex:Splice3:0} (see Figure \ref{fig:OverSplice3}). Let $\mathfrak{S}_3:=\mathfrak{S}_{\{2,3\}}\hookrightarrow\mathfrak{S}_0$ be given by the nodes $\{E_2,E_3\}$; 
                thus $\overline{\mathfrak{S}_{3}}$ and $\overline{\mathfrak{S}_{3}}'$ are given by $\{E_1,E_2,E_3,E_5\}$ and $\{E_1,E_2,E_3,E_4,E_5\}$ respectively, see Figure \ref{fig:OverSplice3:1}. Using the notations of Lemma \ref{Lem:overreducedZetaExpression} we can write that
                \begin{align*}
                    \big(\varphi_{\overline{\mathfrak{S}_3},\overline{\mathfrak{S}_3}'}^*f_{\Gamma(\overline{\mathfrak{S}_3})}\big)(\mathbf{t}_{\mathcal{N}(\Gamma_3)}) = \frac{1-\mathbf{t}^{6X_1}}{(1-\mathbf{t}^{2X_1})(1-\mathbf{t}^{3X_1})}\frac{1-\mathbf{t}^{2X_2}}{1-\mathbf{t}^{X_2}}(1-\mathbf{t}^{X_3})\frac{1-\mathbf{t}^{6X_5}}{(1-\mathbf{t}^{2X_5})(1-\mathbf{t}^{3X_5})}\frac{1}{1-\mathbf{t}^{X_4}},
                \end{align*}
                where 
                \[
                    \begin{bmatrix}
                        X_1\\X_2\\X_3\\X_4\\X_5
                    \end{bmatrix} = \begin{bmatrix}
                        181&30&40&20&48\\
                        90&15&20&10 &24\\
                        120&40&60&30&72\\
                        60& 10&15&9&18\\
                        48&8&12&6&17
                    \end{bmatrix}\cdot\begin{bmatrix}
                        E_1\\E_2\\E_3\\E_4\\E_5
                    \end{bmatrix},\quad\mbox{and}\quad \begin{bmatrix}
                        U_2\\U_3\\U_4
                    \end{bmatrix} = \begin{bmatrix}
                        15&20&10\\
                        8&12&6\\
                        10&15&9
                    \end{bmatrix}\cdot\begin{bmatrix}
                        E_2\\E_3\\E_4
                    \end{bmatrix}.
                \]
                Note that $\restr{X_1}{\mathcal{N}(\mathfrak{S}_0)} = 2U_2$, $\restr{X_2}{\mathcal{N}(\mathfrak{S}_0)} = U_2$, $\restr{X_3}{\mathcal{N}(\mathfrak{S}_0)} = 5U_3$, $\restr{X_4}{\mathcal{N}(\mathfrak{S}_0)} = U_4$, $\restr{X_5}{\mathcal{N}(\mathfrak{S}_0)} = U_3$. Thus, as seen in Lemma \ref{Lem:AlexanderMultipleFromSingle} and the polynomials in Example \ref{ex:Splice3:0}, we have that
                \begin{align*}
                    f_{\{2,3\}}&(t_2,t_3,t_4) := \big(\varphi_{\overline{\mathfrak{S}_3},\overline{\mathfrak{S}_3}'}^*f_{\Gamma(\overline{\mathfrak{S}_3})}\big)|_{\mathcal{N}(\mathfrak{S}_0)}(\mathbf{t}_{\mathcal{N}(\mathfrak{S}_0)})= \\
                    &= \frac{1-\mathbf{t}^{12U_2}}{(1-\mathbf{t}^{4U_2})(1-\mathbf{t}^{6U_2})}\frac{1-\mathbf{t}^{2U_2}}{1-\mathbf{t}^{U_2}}(1-\mathbf{t}^{5U_3})\frac{1-\mathbf{t}^{6U_3}}{(1-\mathbf{t}^{2U_3})(1-\mathbf{t}^{3U_3})}\frac{1}{1-\mathbf{t}^{U_4}}\\
                    &=\frac{\Delta_{\mathfrak{S}_0(2)}(\mathbf{t}^{U_2})}{1-\mathbf{t}^{U_2}}\Delta_{\mathfrak{S}_0(3)}(\mathbf{t}^{U_3})\frac{1}{1-\mathbf{t}^{U_4}}.
                \end{align*}
                \begin{figure}
                    \centering
                    \renewcommand{\graphscale}{1.25cm}
                    \begin{tikzpicture}[roundnode/.style={circle, draw=black, fill=black, very thick, inner sep = 1},line width=1,x=\graphscale,y=\graphscale]
                        \node[roundnode] (3) {};
                        \path (3)++(1,0) node[roundnode] (4) {};
                        \path (3)++(0,-1) node[roundnode] (5) {};
                        \path (3)++(-1,0) node[roundnode] (2) {};
                        \path (5)++(-60:1) node[roundnode] (11) {};
                        \path (5)++(-120:1) node[roundnode] (10) {};
                        \path (2)++(-1,0) node[roundnode] (1) {};
                        \path (2)++(0,1) node[roundnode] (9) {};
                        \path (1)++(120:1) node[roundnode] (7) {};
                        \path (1)++(-120:1) node[roundnode] (8) {};
                        \draw (7)--(1)--(8) (1)--(4) (2)--(9) (10)--(5)--(11) (5)--(3);
                        
                        \begin{scriptsize}
                            \renewcommand{\r}{0.25}
                            \foreach \vertex/\angle/\weight in {%
                            1/-35/181,1/-150/3,1/150/2,
                            2/-145/1,2/120/2,2/35/15,
                            3/150/3,3/35/4,3/-120/5,
                            5/140/17,5/-150/2,5/-30/3
                            }{
                            \path (\vertex) ++ (\angle:\r) node {$\weight$};
                            }
                            \renewcommand{\r}{0.3}
                            \foreach \vertex/\angle in {%
                                1/60,2/-80,3/-50,5/50
                            }{
                                \path (\vertex) ++ (\angle:\r) node {$E_{\vertex}$};
                            }
                           
                        \end{scriptsize}
                         \path (10)++(0,-0.5) node {$\overline{\mathfrak{S}_3}$};
                    \end{tikzpicture}
                    \hspace{1cm}
                    \begin{tikzpicture}[roundnode/.style={circle, draw=black, fill=black, very thick, inner sep = 1},line width=1,x=\graphscale,y=\graphscale]
                        \node[roundnode] (3) {};
                        \path (3)++(1,0) node[roundnode] (4) {};
                        \path (3)++(0,-1) node[roundnode] (5) {};
                        \path (3)++(-1,0) node[roundnode] (2) {};
                        \path (4)++(1,0) node[roundnode] (6) {};
                        \path (4)++(0,1) node[roundnode] (12) {};
                        \path (5)++(-60:1) node[roundnode] (11) {};
                        \path (5)++(-120:1) node[roundnode] (10) {};
                        \path (2)++(-1,0) node[roundnode] (1) {};
                        \path (2)++(0,1) node[roundnode] (9) {};
                        \path (1)++(120:1) node[roundnode] (7) {};
                        \path (1)++(-120:1) node[roundnode] (8) {};
                        \draw (7)--(1)--(8) (1)--(6) (2)--(9) (10)--(5)--(11) (5)--(3) (4)--(12);
                        
                        \begin{scriptsize}
                            \renewcommand{\r}{0.25}
                            \foreach \vertex/\angle/\weight in {%
                            1/-35/181,1/-150/3,1/150/2,
                            2/-145/1,2/120/2,2/35/15,
                            3/150/3,3/35/4,3/-120/5,
                            4/145/9,4/-35/1,4/65/2,
                            5/140/17,5/-150/2,5/-30/3
                            }{
                            \path (\vertex) ++ (\angle:\r) node {$\weight$};
                            }
                            \renewcommand{\r}{0.3}
                            \foreach \vertex/\angle in {%
                                1/60,2/-80,3/-50,5/50,4/-120
                            }{
                                \path (\vertex) ++ (\angle:\r) node {$E_{\vertex}$};
                            }
                            
                        \end{scriptsize}
                        \path (10)++(0,-0.5) node {$\overline{\mathfrak{S}_3}'$};
                    \end{tikzpicture}
                    \caption{}
                    \label{fig:OverSplice3:1}
                \end{figure}
                
            \end{example}
            \begin{proof}[Proof of Theorem \ref{Thm:oRedZetaSurgery}]
                By Lemma \ref{Lem:overreducedZetaExpression} we have 
                \begin{align*}
                    \sum_{\mathfrak{S}_3\hookrightarrow\mathfrak{S}_0}(-1)^{|\mathfrak{S}_3|}&\big(\varphi_{\overline{\mathfrak{S}_3},\overline{\mathfrak{S}_3}'}^*f_{\Gamma(\overline{\mathfrak{S}_3})}\big)|_{\mathcal{N}(\mathfrak{S}_0)}(\mathbf{t}_{\mathcal{N}(\mathfrak{S}_0)})= \\&= (-1)^{|\mathcal{N}_{\mathrm{n}}(\mathfrak{S}_0)|} \prod_{n\in\mathcal{N}_{\mathrm{n}}(\mathfrak{S}_0)}\Delta_{\mathfrak{S}_0(n)}(\mathbf{t}^{U_n})\prod_{n\in\mathcal{N}_{\mathrm{e}}(\mathfrak{S}_0)}\frac{1}{1-\mathbf{t}^{U_n}}\sum_{I\subseteq\mathcal{N}_{\mathrm{e}}(\mathfrak{S}_0)}(-1)^{|I|}\prod_{n\in I}\Delta_{\mathfrak{S}_0(n)}(\mathbf{t}^{U_n})\\
                    &=(-1)^{|\mathcal{N}_{\mathrm{n}}(\mathfrak{S}_0)|} \prod_{n\in\mathcal{N}_{\mathrm{n}}(\mathfrak{S}_0)}\Delta_{\mathfrak{S}_0(n)}(\mathbf{t}^{U_n})\prod_{n\in\mathcal{N}_{\mathrm{e}}(\mathfrak{S}_0)}\frac{1}{1-\mathbf{t}^{U_n}}\prod_{n\in\mathcal{N}_{\mathrm{e}}(\mathfrak{S}_0)}(1-\Delta_{\mathfrak{S}_0(n)}(\mathbf{t}^{U_n})).
                \end{align*}
            \end{proof}
            \begin{example}
                We continue with Example \ref{ex:Splice3:0}. Performing similar computations as in Example \ref{ex:Splice3:1} we can write the following. For $\mathfrak{S}_3:=\mathfrak{S}_{\{3,4\}}\hookrightarrow\mathfrak{S}_0$ given by $\{E_3,E_4\}$ we  have $\mathcal{N}(\overline{\mathfrak{S}_3}) = \{3,4,5,6\}$, $\mathcal{N}(\overline{\mathfrak{S}_3'}) = \{2,3,4,5,6\}$ and
                \[
                     f_{\{3,4\}}(t_2,t_3,t_4) :=\big(\varphi_{\overline{\mathfrak{S}_3},\overline{\mathfrak{S}_3}'}^*f_{\Gamma(\overline{\mathfrak{S}_3})}\big)|_{\mathcal{N}(\mathfrak{S}_0)}(\mathbf{t}_{\mathcal{N}(\mathfrak{S}_0)}) = \frac{1}{1-\mathbf{t}^{U_2}}\Delta_{\mathfrak{S}_0(3)}(\mathbf{t}^{U_3})\frac{\Delta_{\mathfrak{S}_0(4)}(\mathbf{t}^{U_4})}{1-\mathbf{t}^{U_4}}.
                \]
                For $\mathfrak{S}_3:=\mathfrak{S}_{\{3\}}\hookrightarrow\mathfrak{S}_0$ given by $\{E_3\}$ we have $\mathcal{N}(\overline{\mathfrak{S}_3}) = \{3,5\}$, $\mathcal{N}(\overline{\mathfrak{S}_3}') = \{2,3,4,5\}$ and
                \[
                    f_{\{3\}}(t_2,t_3,t_4) := \big(\varphi_{\overline{\mathfrak{S}_3},\overline{\mathfrak{S}_3}'}^*f_{\Gamma(\overline{\mathfrak{S}_3})}\big)|_{\mathcal{N}(\mathfrak{S}_0)}(\mathbf{t}_{\mathcal{N}(\mathfrak{S}_0)})= \frac{1}{1-\mathbf{t}^{U_2}}\Delta_{\mathfrak{S}_0(3)}(\mathbf{t}^{U_3})\frac{1}{1-\mathbf{t}^{U_4}}.
                \]
                And finally for $\mathfrak{S}_3:=\mathfrak{S}_{\{2,3,4\}} =\mathfrak{S}_0$ given by the nodes $\{E_2,E_3,E_4\}$, we have $\overline{\mathfrak{S}_3}=\overline{\mathfrak{S}_3}'= \mathfrak{S}$, and
                \begin{align*}
                     f_{\{2,3,4\}}(t_2,t_3,t_4) := \big(\varphi_{\overline{\mathfrak{S}_3},\overline{\mathfrak{S}_3}'}^*f_{\Gamma(\overline{\mathfrak{S}_3})}\big)|_{\mathcal{N}(\mathfrak{S}_0)}(\mathbf{t}_{\mathcal{N}(\mathfrak{S}_0)})&=f_{\Gamma(\mathfrak{S})}(\mathbf{t}_{\mathcal{N}(\mathfrak{S}_0)}) \\
                     &= \frac{\Delta_{\mathfrak{S}_0(2)}(\mathbf{t}^{U_2})}{1-\mathbf{t}^{U_2}}\Delta_{\mathfrak{S}_0(3)}(\mathbf{t}^{U_3})\frac{\Delta_{\mathfrak{S}_0(4)}(\mathbf{t}^{U_4})}{1-\mathbf{t}^{U_4}}.
                \end{align*}
                Thus
                \begin{align*}
                    f_{\{2,3,4\}}(\mathbf{t}_{\mathcal{N}(\mathfrak{S}_0)})&-f_{\{2,3\}}(\mathbf{t}_{\mathcal{N}(\mathfrak{S}_0)})-f_{\{3,4\}}(\mathbf{t}_{\mathcal{N}(\mathfrak{S}_0)})+f_{\{3\}}(\mathbf{t}_{\mathcal{N}(\mathfrak{S}_0)})=\\
                    &\qquad=\frac{1-\Delta_{\mathfrak{S}_0(2)}(\mathbf{t}^{U_2})}{1-\mathbf{t}^{U_2}}\Delta_{\mathfrak{S}_0(3)}(\mathbf{t}^{U_3})\frac{1-\Delta_{\mathfrak{S}_0(4)}(\mathbf{t}^{U_4})}{1-\mathbf{t}^{U_4}}
                \end{align*}
                The same formula holds for the over-reduced polynomial part, as will be shown in Theorem \ref{Thm:oRedPolSurgery}.
            \end{example}
            \subsection{Surgery formula for the over-reduced polynomial parts}\label{ss:5.4}
            We use the same notations as in the previous section.
            
            Our goal  is to apply the operator  $\operatorname{PolPart}_{\mathcal{N}(\mathfrak{S}_0)}$ to the formula (\ref{eq:oRedZetaSugery}).
            However, unlike in the proof of Theorem \ref{Thm:PolSurgery},  the equality
            \[
                \operatorname{PolPart}_{\mathcal{N}(\mathfrak{S}_0)}\big((\varphi_{\overline{\mathfrak{S}_3},\overline{\mathfrak{S}_3}'}^*f_{\Gamma(\overline{\mathfrak{S}_3})})|_{\mathcal{N}(\mathfrak{S}_0)}(\mathbf{t}_{\mathcal{N}(\mathfrak{S}_0)})\big)=  \big(\varphi_{\overline{\mathfrak{S}_3},\overline{\mathfrak{S}_3}'}^*\operatorname{Pol}_{\Gamma(\overline{\mathfrak{S}_3})}\big)(\mathbf{t}_{\mathcal{N}(\mathfrak{S}_0)}),
            \]
            fails to hold, as $\mathcal{N}(\mathfrak{S}_0)$ can be a strict subset of $\mathcal{N}(\overline{\mathfrak{S}_3}')$, thus the right hand side may have more monomials than the left hand side. Nevertheless, their signed sum (like the left side of (\ref{eq:oRedZetaSugery})) with $\mathfrak{S}_3\hookrightarrow\mathfrak{S}_0$ end up being equal as it will be shown in this section in Theorem \ref{Thm:oRedPolSurgery}. The idea is based on Remark \ref{Rem:IterPolPart}. This alone is not enough, since the right hand side (the sum of the polynomial parts, reduced) will be have the operator $\operatorname{PolPart}_{\mathcal{N}(\mathfrak{S}_0)}$ applied to it. We need to show that the sum of the polynomial parts (reduced to $\mathcal{N}(\mathfrak{S}_0)$ after the respective variable changes) is still a polynomial, i.e., an element of $\Z[\mathbf{t}_{\mathcal{N}(\mathfrak{S}_0)}]$. To do this, we will use the reformulation of Theorem \ref{Thm:PolSurgery} mentioned in Remark \ref{Rem:PolSurgeryReform} and an inversion formula for the sums of type $\sum_{\mathfrak{S}'\hookrightarrow\mathfrak{S}} (-1)^{|\mathfrak{S}'|}A_{\mathfrak{S}'}$, where $A_{\mathfrak{S}'}$ are formal variables associated for each $\mathfrak{S}'\subseteq\mathfrak{S}$.

            \vspace{2mm}

            We say that $\mathfrak{S}_2\subseteq\mathfrak{S}$ is \emph{extendable} (with respect to $\mathfrak{S}_0$) if $\mathcal{N}(\mathfrak{S}_0)\setminus\mathcal{N}(\mathfrak{S}_2)\subseteq\mathcal{N}_{\mathrm{e}}(\mathfrak{S}_0)$ and $\mathcal{N}(\mathfrak{S}_2)\cap\mathcal{N}(\mathfrak{S}_0)\neq\emptyset$.
            \begin{Lem}\label{Lem:reverseIE}
                We have the following inclusion-exclusion-inversion type formula
                \[
                    \sum_{\mathfrak{S}_0\subseteq \mathfrak{S}_1\subseteq\mathfrak{S}}(-1)^{|\mathfrak{S}_1|}\sum_{\mathfrak{S}_2\hookrightarrow\mathfrak{S}_1}(-1)^{|\mathfrak{S}_2|}A_{\mathfrak{S}_2} = \sum_{\mathfrak{S}_3\hookrightarrow\mathfrak{S}_0} \ (-1)^{|\mathfrak{S}_0|-|\mathfrak{S}_3|}A_{\overline{\mathfrak{S}_3}}.
                \]
            \end{Lem}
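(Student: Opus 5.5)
The plan is to exchange the order of summation and compute, for each fixed complete sub-diagram $\mathfrak{S}_2$ (including $\emptyset$ when two-node diagrams occur), the total coefficient of $A_{\mathfrak{S}_2}$ on the left hand side. This coefficient is
\[
c(\mathfrak{S}_2)=(-1)^{|\mathfrak{S}_2|}\sum_{\mathfrak{S}_1}(-1)^{|\mathfrak{S}_1|},
\]
where $\mathfrak{S}_1$ runs over the complete sub-diagrams with $\mathfrak{S}_0\subseteq\mathfrak{S}_1\subseteq\mathfrak{S}$ and $\mathfrak{S}_2\hookrightarrow\mathfrak{S}_1$. I will show that $c(\mathfrak{S}_2)=(-1)^{|\mathfrak{S}_0|-|\mathfrak{S}_3|}$ if $\mathfrak{S}_2=\overline{\mathfrak{S}_3}$ for some peel $\mathfrak{S}_3\hookrightarrow\mathfrak{S}_0$, and $c(\mathfrak{S}_2)=0$ otherwise. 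I also need that $\mathfrak{S}_3\mapsto\overline{\mathfrak{S}_3}$ is injective. This holds because $\overline{\mathfrak{S}_3}\cap\mathfrak{S}_0=\mathfrak{S}_3$, which follows from Step 1 in the proof of Lemma \ref{Lem:overreducedZetaExpression}, namely $\mathcal{N}(\overline{\mathfrak{S}_3})=\bigsqcup_{n\in\mathcal{N}(\mathfrak{S}_3)}\mathcal{N}(\mathfrak{S}_0(n))$.

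To compute $c(\mathfrak{S}_2)$, I parametrize the $\mathfrak{S}_1$ with $\mathfrak{S}_2\hookrightarrow\mathfrak{S}_1$. By definition of a peel, $\mathcal{N}(\mathfrak{S}_1)=\mathcal{N}(\mathfrak{S}_2)\sqcup D$, where every $w\in D$ is an end-node of $\mathfrak{S}_1$. By connectedness and the tree structure, this means that each $w\in D$ is adjacent in $\mathfrak{S}$ to exactly one node of $\mathfrak{S}_2$. Two elements of $D$ are never adjacent, since that would create a cycle through $\mathfrak{S}_2$. Conversely, any set $D$ of nodes of $\mathfrak{S}\setminus\mathfrak{S}_2$ that are adjacent to $\mathfrak{S}_2$ gives a valid $\mathfrak{S}_1$. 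Let $C$ be the set of all nodes outside $\mathfrak{S}_2$ adjacent to $\mathfrak{S}_2$. The condition $\mathfrak{S}_0\subseteq\mathfrak{S}_1$ then says two things. First, $M:=\mathcal{N}(\mathfrak{S}_0)\setminus\mathcal{N}(\mathfrak{S}_2)\subseteq D$. Second, $M\subseteq C$, which forces $\mathfrak{S}_2$ to be extendable: $\mathfrak{S}_2\cap\mathfrak{S}_0\neq\emptyset$ and $M\subseteq\mathcal{N}_{\mathrm{e}}(\mathfrak{S}_0)$, using connectedness of $\mathfrak{S}_0$. If $\mathfrak{S}_2$ is not extendable, the sum is empty and $c(\mathfrak{S}_2)=0$. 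Otherwise, write $D=M\sqcup F$ with $F$ an arbitrary subset of $C\setminus M$. This gives
\[
c(\mathfrak{S}_2)=(-1)^{|M|}\sum_{F\subseteq C\setminus M}(-1)^{|F|},
\]
which vanishes unless $C\setminus M=\emptyset$.

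It remains to identify the condition $C\setminus M=\emptyset$ with $\mathfrak{S}_2=\overline{\mathfrak{S}_3}$, where $\mathfrak{S}_3:=\mathfrak{S}_2\cap\mathfrak{S}_0$. Extendability says exactly that $\mathfrak{S}_3\hookrightarrow\mathfrak{S}_0$. The condition $C\subseteq\mathcal{N}(\mathfrak{S}_0)\setminus\mathcal{N}(\mathfrak{S}_3)$ says that $\mathfrak{S}_2$ is a maximal connected sub-diagram of $\mathfrak{S}\setminus(\mathfrak{S}_0\setminus\mathfrak{S}_3)$ meeting $\mathfrak{S}_3$, that is, $\mathfrak{S}_2=\overline{\mathfrak{S}_3}$. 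In that case the coefficient is $(-1)^{|M|}=(-1)^{|\mathfrak{S}_0|-|\mathfrak{S}_3|}$, as required.

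The main obstacle I expect is the degenerate situations. The first is the convention that $\emptyset\hookrightarrow\mathfrak{S}_1$ is allowed only when $|\mathfrak{S}_1|=2$, and likewise for $\mathfrak{S}_3=\emptyset$ when $|\mathfrak{S}_0|=2$. The second is the restriction in the peel definition forbidding the removal of all nodes. For $\mathfrak{S}_2=\emptyset$, I will check directly that the only contributing $\mathfrak{S}_1$ is $\mathfrak{S}_1=\mathfrak{S}_0$, and only when $|\mathfrak{S}_0|=2$. Its coefficient is then $(-1)^{2}=1=(-1)^{|\mathfrak{S}_0|-0}$, which matches the term $A_{\overline{\emptyset}}$ with $\overline{\emptyset}=\emptyset$. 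I will also verify that for nonempty $\mathfrak{S}_2$ with $|\mathfrak{S}_1|=2$ the above parametrization still holds.
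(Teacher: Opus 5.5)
Your proposal is correct and follows essentially the same route as the paper. The paper also fixes $\mathfrak{S}_2$ and writes every admissible $\mathfrak{S}_1$ as having nodes $\mathcal{N}(\mathfrak{S}_2)\cup(\mathcal{N}(\mathfrak{S}_0)\setminus\mathcal{N}(\mathfrak{S}_2))\cup J$, with $J$ ranging over subsets of a set $K$ (your $C\setminus M$, read off from the largest $\mathfrak{S}_2'$ with $\mathfrak{S}_2\hookrightarrow\mathfrak{S}_2'$); the alternating sum vanishes unless $K=\emptyset$, i.e.\ $\mathfrak{S}_2=\overline{\mathfrak{S}_3}$. Your additional justifications (the parametrization of $\mathfrak{S}_1$, injectivity of $\mathfrak{S}_3\mapsto\overline{\mathfrak{S}_3}$, and the empty-diagram convention, which the paper leaves implicit) just make the same argument more complete.
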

            
            \begin{proof}
                Note that $A_{\mathfrak{S}_2}$ appears in the left sum exactly when $\mathfrak{S}_2$ is extendable.
                The coefficient of $A_{\mathfrak{S}_2}$ for $\mathfrak{S}_2\subseteq\mathfrak{S}$ is given by the following summation over $\mathfrak{S}_1$ under the specified conditions 
                \[
                    \sum\{(-1)^{|\mathfrak{S}_1|+|\mathfrak{S}_2|}\mid \mathfrak{S}_0\subseteq\mathfrak{S}_1\subseteq\mathfrak{S}, \mathfrak{S}_2\hookrightarrow \mathfrak{S}_1\}.
                \]                
                Let $\mathfrak{S}_2'\subseteq\mathfrak{S}$ be the largest such that $\mathfrak{S}_2\hookrightarrow\mathfrak{S}_2'$. Then $\mathfrak{S}_0\subseteq \mathfrak{S}_2'$, and the nodes of $\mathfrak{S}_2'$ can be expressed as $\mathcal{N}(\mathfrak{S}_2') = \mathcal{N}(\mathfrak{S}_2)\cup(\mathcal{N}(\mathfrak{S}_0)\setminus\mathcal{N}(\mathfrak{S}_2))\cup K$. Then $\mathfrak{S}_1$ contributes to the coefficient of $\mathfrak{S}_2$ if and only if
                $\mathcal{N}(\mathfrak{S}_1) = \mathcal{N}(\mathfrak{S}_2)\cup(\mathcal{N}(\mathfrak{S}_0)\setminus\mathcal{N}(\mathfrak{S}_2))\cup J$, for some $J\subseteq K$. Thus the coefficient of $A_{\mathfrak{S}_2}$ is given by
                \[
                    (-1)^{|\mathfrak{S}_2|+|\mathfrak{S}_2|+|\mathcal{N}(\mathfrak{S}_0)\setminus\mathcal{N}(\mathfrak{S}_2)|}\sum_{J\subseteq K}(-1)^{|J|} = (-1)^{|\mathcal{N}(\mathfrak{S}_0)\setminus\mathcal{N}(\mathfrak{S}_2)|}\cdot\begin{cases}
                        1& \mbox{if} \ K=\emptyset\\
                        0& \mbox{if} \  K\neq\emptyset.
                    \end{cases}
                \]
                Let $\mathfrak{S}_3\hookrightarrow\mathfrak{S}_0$ be given by $\mathcal{N}(\mathfrak{S}_3)=\mathcal{N}(\mathfrak{S}_0)\cap\mathcal{N}(\mathfrak{S}_2)$.
                Note that $K=\emptyset$ if and only if $\mathfrak{S}_2 = \overline{\mathfrak{S}_3}$. The coefficient is 
                \[
                    (-1)^{|\mathcal{N}(\mathfrak{S}_0)\setminus\mathcal{N}(\mathfrak{S}_2)|} = (-1)^{|\mathcal{N}(\mathfrak{S}_0)\setminus\mathcal{N}(\mathfrak{S}_3)|} = (-1)^{|\mathfrak{S}_0| - |\mathfrak{S}_3|}.
                \]
            \end{proof}
            \begin{Rem}
                To actually use this inversion formula, with $A_{\mathfrak{S}_2} :=\big(\varphi_{\mathfrak{S}_2,\mathfrak{S}_1}^*\operatorname{Pol}_{\Gamma(\mathfrak{S}_2)}\big)(\mathbf{t}_{\mathcal{N}(\mathfrak{S}_0)})$, we need to show that this substitution is well defined, that is $\big(\varphi_{\mathfrak{S}_2,\mathfrak{S}_1}^*\operatorname{Pol}_{\Gamma(\mathfrak{S}_2)}\big)(\mathbf{t}_{\mathcal{N}(\mathfrak{S}_0)})$ is independent of the choice of $\mathfrak{S}_1$, where $\mathfrak{S}_2\hookrightarrow\mathfrak{S}_1$. This will 
                guarantee that similar cancellations will occur for these polynomials as in Lemma \ref{Lem:reverseIE}.

                For instance consider the left diagram in Figure \ref{fig:OverSplice3} and 
                $\mathfrak{S}_2$ given by the nodes $\mathcal{N}(\mathfrak{S}_2)=\{2,3\}$. It has extensions $\mathfrak{S}_1$ which correspond to $\{2,3,4\}$, $\{1,2,3,4\}$, $\{2,3,4,5\}$, and $\{1,2,3,4,5\}$. These yield four different change of variable maps which applied to $\operatorname{Pol}_{\Gamma(\mathfrak{S}_2)}(\mathbf{t}_{\mathcal{V}(\Gamma(\mathfrak{S}_2))})$, yield series in  variables $\mathbf{t}_{\{2,3,4\}}$, $\mathbf{t}_{\{1,2,3,4\}}$, $\mathbf{t}_{\{2,3,4,5\}}$, and $\mathbf{t}_{\{1,2,3,4,5\}}$, respectively. Thus in order to have the cancellations as in Lemma \ref{Lem:reverseIE}, we need to show that further restricting these series to the nodes $\{2,3,4\}$ yields the same series.
                
            \end{Rem}
            Let $\mathfrak{S}_2\subseteq\mathfrak{S}$ be extendable with respect to $\mathfrak{S}_0$. Also let $\mathfrak{S}_1\subseteq\mathfrak{S}$ such that $\mathfrak{S}_0\subseteq\mathfrak{S}_1$ and $\mathfrak{S}_2$ is a peel of $\mathfrak{S}_1$. Note that if $\mathfrak{S}_2 = \overline{\mathfrak{S}_3}$ for some $\mathfrak{S}_3\hookrightarrow\mathfrak{S}_0$, then there is only one choice for of $\mathfrak{S}_1$ and that is exactly $\overline{\mathfrak{S}_3}'$.
            Let $\mathcal{V}_2$ be the vertex set of the plumbing tree associated with $\mathfrak{S}_2$.
            \begin{Lem}\label{Lem:PolIndependence}
                 Let $S\in \Z[[\mathbf{t}_{\mathcal{V}_2}^{-1}]][\mathbf{t}_{\mathcal{V}_2}]$. Then $(\varphi_{\mathfrak{S}_2,\mathfrak{S}_1}^*S)|_{\mathcal{N}(\mathfrak{S}_0)}(\mathbf{t}_{\mathcal{N}(\mathfrak{S}_0)})$ is independent of $\mathfrak{S}_1$. Additionally $(\varphi_{\overline{\mathfrak{S}_3},\overline{\mathfrak{S}_3}'}^*S)|_{\mathcal{N}(\mathfrak{S}_0)}(\mathbf{t}_{\mathcal{N}(\mathfrak{S}_0)}) = (\varphi_{\mathfrak{S}_3,\mathfrak{S}_0}^*S)(\mathbf{t}_{\mathcal{N}(\mathfrak{S}_0)})$ for any $\mathfrak{S}_3\hookrightarrow\mathfrak{S}_0$.
            \end{Lem}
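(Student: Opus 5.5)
Since the operator $\varphi^*_{\mathfrak{S}_2,\mathfrak{S}_1}$ acts monomial by monomial, the whole statement reduces to a statement about a single linear map. For $l\in\Q\langle E_{v,\Gamma(\mathfrak{S}_2)}\mid v\in \mathcal{N}(\mathfrak{S}_2)\cup\Theta_{\mathfrak{S}_2,\mathfrak{S}_1}\rangle$ we have $\mathbf{t}^{l}\mapsto \mathbf{t}^{\varphi_{\mathfrak{S}_2,\mathfrak{S}_1}(l)}$, followed by $t_v\mapsto 1$ for $v\notin\mathcal{N}(\mathfrak{S}_0)$. (Before this, $S$ is reduced to the variables $\mathcal{N}(\mathfrak{S}_2)\cup\Theta_{\mathfrak{S}_2,\mathfrak{S}_1}$ by the Notation after Definition \ref{def:operator}.) It therefore suffices to show that the composition $\pi_0\circ\varphi_{\mathfrak{S}_2,\mathfrak{S}_1}$, where $\pi_0$ is the projection $l\mapsto l|_{\mathcal{N}(\mathfrak{S}_0)}$, does not depend on $\mathfrak{S}_1$. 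A subtlety is that the domain itself depends on $\mathfrak{S}_1$ through $\Theta_{\mathfrak{S}_2,\mathfrak{S}_1}$. However, only the gluing vertices $\theta_n$ with $n\in\mathcal{N}(\mathfrak{S}_1)\setminus\mathcal{N}(\mathfrak{S}_2)$ appear, and $\theta_n$ depends only on the edge $v_n\sim n$. Hence variables attached to $\theta_n$ with $n\notin\mathcal{N}(\mathfrak{S}_0)$ are the only ones that vary with $\mathfrak{S}_1$, and I expect them to be sent to $1$ in all cases.

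The key tool is the explicit formula (\ref{eq:varChange_basis}), applied with $\mathfrak{S}_1$ in the role of $\mathfrak{S}$ and $\mathfrak{S}_2$ in the role of $\Gamma_I$:
\[
\varphi(E_{\theta_n})=\tfrac{1}{\alpha_n}E_{n},\qquad \varphi(E_u)=E_u-\sum_{u-_bn,\ n\in\mathcal{N}(\mathfrak{S}_1)\setminus\mathcal{N}(\mathfrak{S}_2)}\tfrac{\beta_n}{\alpha_n}E_n .
\]
By Lemma \ref{Lem:legRestrs} and the remark following (\ref{eq:varChange_basis}), the constants $\alpha_n,\beta_n$ depend only on the weights around $n$ and $v_n$ in $\mathfrak{S}$, so they are the same for every admissible $\mathfrak{S}_1$. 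Projecting to $\mathcal{N}(\mathfrak{S}_0)$ then gives the following.
\begin{itemize}
\item[(i)] For $u\in\mathcal{N}(\mathfrak{S}_2)$, the image $\pi_0\varphi(E_u)$ equals $E_u|_{\mathcal{N}(\mathfrak{S}_0)}$ minus $\frac{\beta_n}{\alpha_n}E_n$, summed over the peeled end-nodes $n$ adjacent to $u$ that lie in $\mathcal{N}(\mathfrak{S}_0)$.
\item[(ii)] For $\theta_n$ with $n\in\mathcal{N}(\mathfrak{S}_0)$, the image is $\frac1{\alpha_n}E_n$.
\item[(iii)] For $\theta_n$ with $n\notin \mathcal{N}(\mathfrak{S}_0)$, the image is $0$.
\end{itemize}

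The combinatorial point is that the set $\mathcal{N}(\mathfrak{S}_1)\setminus\mathcal{N}(\mathfrak{S}_2)$, intersected with $\mathcal{N}(\mathfrak{S}_0)$, is forced. Since $\mathfrak{S}_0\subseteq\mathfrak{S}_1$, it equals $\mathcal{N}(\mathfrak{S}_0)\setminus\mathcal{N}(\mathfrak{S}_2)$. Extendability of $\mathfrak{S}_2$ guarantees that each such $n$ is an end-node of $\mathfrak{S}_0$ adjacent to a node $v_n$ of $\mathfrak{S}_2$, and it is also an end-node of $\mathfrak{S}_1$, since $\mathfrak{S}_2\hookrightarrow\mathfrak{S}_1$. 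Thus the data in (i)–(iii) that survive the projection involve only these fixed $n$ and their $\theta_n,\alpha_n,\beta_n$. The variables $t_{\theta_n}$ with $n\notin\mathcal{N}(\mathfrak{S}_0)$ become $1$ regardless of $\mathfrak{S}_1$, and the variables $t_u$ with $u\in\mathcal{N}(\mathfrak{S}_2)\setminus\mathcal{N}(\mathfrak{S}_0)$ also become $1$, by (i). So $\pi_0\circ\varphi_{\mathfrak{S}_2,\mathfrak{S}_1}$, viewed on the common variables $\mathcal{N}(\mathfrak{S}_2)\cup\Theta$ of $S$, is independent of $\mathfrak{S}_1$.

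One must also check that the reductions are well defined, i.e.\ that the finiteness property (\ref{eq:finiteprop}) holds. I would treat this as an identity of formal substitutions, interpreted for the series in question the same way as in Lemma \ref{lem:varchangeSign}. This well-definedness issue is the main obstacle: for a general Laurent series in $\Z[[\mathbf{t}^{-1}]][\mathbf{t}]$ one needs the exponents to project with finite fibres. I would argue this as in Lemma \ref{Lem:singDistribution}, or restrict to the series actually used ($\operatorname{Pol}$ and $f$).

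For the second claim, take $\mathfrak{S}_2=\overline{\mathfrak{S}_3}$ and $\mathfrak{S}_1=\overline{\mathfrak{S}_3}'$, and compare with $\varphi_{\mathfrak{S}_3,\mathfrak{S}_0}$. The peeled nodes are the same set $\mathcal{N}(\mathfrak{S}_0)\setminus\mathcal{N}(\mathfrak{S}_3)$ in both cases, with the same $\theta_n$ and the same $\alpha_n,\beta_n$, which are local by Lemma \ref{Lem:legRestrs}. Moreover, $\Gamma(\overline{\mathfrak{S}_3})$ and $\Gamma(\mathfrak{S}_3)$ share these gluing vertices. Formula (\ref{eq:varChange_basis}) then gives identical projections on the variables of $\mathcal{N}(\mathfrak{S}_3)\cup\Theta$. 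The extra node variables of $\overline{\mathfrak{S}_3}$, which lie outside $\mathfrak{S}_0$, are sent to $1$, which matches the fact that they are absent in $\Gamma(\mathfrak{S}_3)$. Hence the two series coincide.
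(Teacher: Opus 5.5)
Your argument is correct and is essentially the paper's proof. The paper also compares the reduced exponents monomial by monomial. For coordinates in $\mathcal{N}(\mathfrak{S}_2)\cap\mathcal{N}(\mathfrak{S}_0)$ it uses Lemma \ref{lem:varRestr}, and for $n\in\mathcal{N}(\mathfrak{S}_0)\setminus\mathcal{N}(\mathfrak{S}_2)$ it uses the coordinate formula (\ref{eq:changeofvar}) with the local constants $\alpha_n,\beta_n$ of Lemma \ref{Lem:legRestrs}; this is just the dual form of your basis-image computation (i)--(iii). The finiteness issue you flag is not a real obstacle: the statement is an identity of formal monomial substitutions (the two composite exponent maps coincide), and in its application $S=\operatorname{Pol}$ is a finite sum.
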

            In the statement in order, to apply $\varphi_{\mathfrak{S}_2,\mathfrak{S}_1}^*$, first we reduce $S(\mathbf{t}_{\mathcal{V}_2})$ to the variables determined by the domain of $\varphi_{\mathfrak{S}_2,\mathfrak{S}_1}$ (assuming this is well defined). After applying $\varphi_{\mathfrak{S}_2,\mathfrak{S}_1}^*$, we further reduce to the nodes of $\mathfrak{S}_0$. The series obtained this way is independent of the choice of $\mathfrak{S}_1$.
            \begin{proof}
                Suppose $\mathfrak{S}_1'\subseteq\mathfrak{S}$ is such that $\mathfrak{S}_2$ is a peel of $\mathfrak{S}_1'$ and $\mathfrak{S}_0\subseteq\mathfrak{S}_1'$ with corresponding change of variable map $\varphi_{\mathfrak{S}_2,\mathfrak{S}_1'}$.        
                Denote by $\Theta := \{\theta_u\mid u\in\mathcal{N}(\mathfrak{S}_0)\setminus\mathcal{N}(\mathfrak{S}_2)\}$. Observe that $\Theta\subseteq \Theta_{\mathfrak{S}_2,\mathfrak{S}_1}\cap \Theta_{\mathfrak{S}_2,\mathfrak{S}_1'}$.
                Suppose $l$ is an exponent in $S$. Then the corresponding exponents after the variable change and reduction are
                \[
                    l_1=\restr{\big(\varphi_{\mathfrak{S}_2,\mathfrak{S}_1}(\restr{l}{\mathcal{N}(\mathfrak{S}_2)\cup \Theta_{\mathfrak{S}_2,\mathfrak{S}_1}})\big)}{\mathcal{N}(\mathfrak{S}_0)},\quad l_1'=\restr{\big(\varphi_{\mathfrak{S}_2,\mathfrak{S}_1'}(\restr{l}{\mathcal{N}(\mathfrak{S}_2)\cup \Theta_{\mathfrak{S}_2,\mathfrak{S}_1'}})\big)}{\mathcal{N}(\mathfrak{S}_0)} .
                \]
                By Lemma \ref{lem:varRestr} we have that $\restr{l_1}{\mathcal{N}(\mathfrak{S}_2)} = \restr{l_1'}{\mathcal{N}(\mathfrak{S}_2)}$. 
                
                Fix an arbitrary $\theta_n\in\Theta$ and consider the constants $\alpha_n,\beta_n$ corresponding to $\varphi_{\mathfrak{S}_2,\mathfrak{S}_1}$ and $\alpha_n',\beta_n'$ corresponding to $\varphi_{\mathfrak{S}_2,\mathfrak{S}_1'}$, as given in Lemma \ref{Lem:legRestrs}. They are determined by the weights surrounding the node $n$ and $n\sim v_n\in\mathcal{N}(\mathfrak{S}_0)$ in the splice diagrams $\mathfrak{S}_1$ and $\mathfrak{S}_1'$ respectively. These weights are the same since $n,v_n\in\mathcal{N}(\mathfrak{S}_0)$ and $\mathfrak{S}_0\subseteq \mathfrak{S}_1,\mathfrak{S}_1'$. Therefore $\alpha_n = \alpha_n'$ and $\beta_n=\beta_n'$, which produces 
                \[
                    m_{n}(l_1) = \frac{m_{\theta_n}(l)-\beta_nm_{v_n}(l)}{\alpha_n}=\frac{m_{\theta_n}(l)-\beta_n'm_{v_n}(l)}{\alpha_n'} = m_n(l_1').
                \]
                The proof of the second part is analogous to the first part, as $\mathcal{N}(\mathfrak{S}_3)\subseteq\mathcal{N}(\overline{\mathfrak{S}_3})$ and $\Theta_{\overline{\mathfrak{S}_3},\overline{\mathfrak{S}_3}'}=\Theta_{\mathfrak{S}_3,\mathfrak{S}_0}$.
            \end{proof} 
            Now we can state and prove the surgery formula for the over-reduced polynomial part $\operatorname{Pol}_{\Gamma}(\mathbf{t}_{\mathfrak{S}_0})$.         
            \begin{Thm}\label{Thm:oRedPolSurgery}
                \begin{align}\label{eq:oRedPolSurgery}
                    \sum_{\mathfrak{S}_3\hookrightarrow\mathfrak{S}_0}(-1)^{|\mathfrak{S}_0|-|\mathfrak{S}_3|}&\big(\varphi_{\overline{\mathfrak{S}_3},\overline{\mathfrak{S}_3}'}^*\operatorname{Pol}_{\Gamma(\overline{\mathfrak{S}_3})}\big)(\mathbf{t}_{\mathcal{N}(\mathfrak{S}_0)})= \\&= (-1)^{|\mathcal{N}_{\mathrm{e}}(\mathfrak{S}_0)|}\prod_{n\in\mathcal{N}_{\mathrm{n}}(\mathfrak{S}_0)}\Delta_{\mathfrak{S}_0(n)}(\mathbf{t}^{U_n})\prod_{n\in\mathcal{N}_{\mathrm{e}}(\mathfrak{S}_0)}\frac{1-\Delta_{\mathfrak{S}_0(n)}(\mathbf{t}^{U_n})}{1-\mathbf{t}^{U_n}}.\nonumber
                \end{align}
            \end{Thm}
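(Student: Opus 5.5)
The plan is to prove the identity in two stages. First I will show that the left hand side of (\ref{eq:oRedPolSurgery}) is a genuine polynomial in $\mathbf{t}_{\mathcal{N}(\mathfrak{S}_0)}$ whose exponents are all $\geq 0$. Then I will show that its $\operatorname{PolPart}_{\mathcal{N}(\mathfrak{S}_0)}$ coincides with the $\operatorname{PolPart}_{\mathcal{N}(\mathfrak{S}_0)}$ of the left hand side of (\ref{eq:oRedZetaSugery}). Since the right hand sides of the two theorems are identical polynomials, Theorem \ref{Thm:oRedZetaSurgery} then finishes the proof.

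\textbf{Stage 1 (polynomiality).} For every complete sub-diagram $\mathfrak{S}_0\subseteq\mathfrak{S}_1\subseteq\mathfrak{S}$, apply Theorem \ref{Thm:PolSurgery} to $\Gamma(\mathfrak{S}_1)$, in the form of Remark \ref{Rem:PolSurgeryReform}:
\[
\sum_{\mathfrak{S}_2\hookrightarrow\mathfrak{S}_1}(-1)^{|\mathfrak{S}_1|-|\mathfrak{S}_2|}\varphi^*_{\mathfrak{S}_2,\mathfrak{S}_1}\operatorname{Pol}_{\Gamma(\mathfrak{S}_2)}=\pm\prod_{n\in\mathcal{N}(\mathfrak{S}_1)}P_n(x_n).
\]
\begin{itemize}
\item The right hand side is a polynomial in $\mathbf{t}_{\mathcal{N}(\mathfrak{S}_1)}$ with nonnegative exponents, because the $P_n$ are polynomials and the cycles $X_n$ have positive coefficients. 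Hence its reduction to $\mathcal{N}(\mathfrak{S}_0)$ is well defined and still has nonnegative exponents.
\item I will reduce every term of the identity to $\mathcal{N}(\mathfrak{S}_0)$. By Lemma \ref{Lem:PolIndependence}, $A_{\mathfrak{S}_2}:=(\varphi^*_{\mathfrak{S}_2,\mathfrak{S}_1}\operatorname{Pol}_{\Gamma(\mathfrak{S}_2)})|_{\mathcal{N}(\mathfrak{S}_0)}$ does not depend on $\mathfrak{S}_1$.
\item I then multiply by $(-1)^{|\mathfrak{S}_1|}$ and sum over all such $\mathfrak{S}_1$. Lemma \ref{Lem:reverseIE} turns this sum into $\sum_{\mathfrak{S}_3\hookrightarrow\mathfrak{S}_0}(-1)^{|\mathfrak{S}_0|-|\mathfrak{S}_3|}A_{\overline{\mathfrak{S}_3}}$, with $A_{\overline{\mathfrak{S}_3}}$ computed via $\overline{\mathfrak{S}_3}'$.
\end{itemize}
This is exactly the left hand side of (\ref{eq:oRedPolSurgery}). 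It therefore equals a signed sum of polynomials with nonnegative exponents, so $\operatorname{PolPart}_{\mathcal{N}(\mathfrak{S}_0)}$ leaves it unchanged.

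\textbf{Stage 2 (identification).} Fix $\mathfrak{S}_3\hookrightarrow\mathfrak{S}_0$ and write $W:=\mathcal{N}(\overline{\mathfrak{S}_3}')\supseteq\mathcal{N}(\mathfrak{S}_0)$. Applied to the peel $\overline{\mathfrak{S}_3}\hookrightarrow\overline{\mathfrak{S}_3}'$, Corollary \ref{Lem:PolPartZHS} gives
\[
\varphi^*\operatorname{Pol}_{\Gamma(\overline{\mathfrak{S}_3})}=\operatorname{PolPart}_{W}(\varphi^*f_{\Gamma(\overline{\mathfrak{S}_3})}),
\]
and the Laurent expansion of $\varphi^*f$ satisfies the finiteness property (\ref{eq:finiteprop}) with respect to $\mathcal{N}(\mathfrak{S}_0)$. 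By Remark \ref{Rem:IterPolPart},
\[
\operatorname{PolPart}_{\mathcal{N}(\mathfrak{S}_0)}\big((\operatorname{PolPart}_W\varphi^*f)|_{\mathcal{N}(\mathfrak{S}_0)}\big)=\operatorname{PolPart}_{\mathcal{N}(\mathfrak{S}_0)}\big((\varphi^*f)|_{\mathcal{N}(\mathfrak{S}_0)}\big).
\]
Summing with the signs and using Stage 1, the left hand side of (\ref{eq:oRedPolSurgery}) equals $\operatorname{PolPart}_{\mathcal{N}(\mathfrak{S}_0)}$ of the left hand side of (\ref{eq:oRedZetaSugery}). By Theorem \ref{Thm:oRedZetaSurgery} this is $\operatorname{PolPart}_{\mathcal{N}(\mathfrak{S}_0)}$ of its right hand side. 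That right hand side is already a polynomial with nonnegative exponents: $\Delta_{\mathfrak{S}_0(n)}(1)=1$ for the single-component links at end-nodes makes $(1-\Delta)/(1-t)$ a polynomial, and the $U_n$ are positive. So it is fixed by $\operatorname{PolPart}$, which proves the theorem. The degenerate case $|\mathfrak{S}_0|=2$ with $\mathfrak{S}_3=\emptyset$ is covered by the Notation convention.

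\textbf{Main obstacle.} I expect the difficulty to lie in the legitimacy of Stage 1. Two points must be checked. First, the substitution $A_{\mathfrak{S}_2}$ has to be consistent for every $\mathfrak{S}_1$ entering the double sum of Lemma \ref{Lem:reverseIE}, including the identification $A_{\overline{\mathfrak{S}_3}}=(\varphi^*_{\overline{\mathfrak{S}_3},\overline{\mathfrak{S}_3}'}\operatorname{Pol})|_{\mathcal{N}(\mathfrak{S}_0)}$. Lemma \ref{Lem:PolIndependence} provides both. Second, the global signs $(-1)^{|\mathcal{N}_{\mathrm{e}}(\mathfrak{S}_1)|}$ coming from Remark \ref{Rem:PolSurgeryReform} need careful bookkeeping. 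A secondary point is the finiteness property needed to apply Remark \ref{Rem:IterPolPart}; I would verify it from the explicit form (\ref{eq:reducedzeta_xn}), in which all exponent directions $X_n$ are positive.
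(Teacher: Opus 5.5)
Your proposal is correct and follows the paper's proof essentially step for step. The identification uses Corollary \ref{Lem:PolPartZHS} together with Remark \ref{Rem:IterPolPart}, and polynomiality uses Remark \ref{Rem:PolSurgeryReform} for every $\mathfrak{S}_0\subseteq\mathfrak{S}_1\subseteq\mathfrak{S}$, Lemma \ref{Lem:PolIndependence} and Lemma \ref{Lem:reverseIE}; the paper simply presents these two stages in the opposite order.

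There is one small sign slip in Stage 1. Since $(-1)^{|\mathfrak{S}_1|-|\mathfrak{S}_2|}=(-1)^{|\mathfrak{S}_1|}(-1)^{|\mathfrak{S}_2|}$, you should sum the reduced identities over $\mathfrak{S}_1$ directly, with no extra factor $(-1)^{|\mathfrak{S}_1|}$, to obtain exactly the left-hand side of Lemma \ref{Lem:reverseIE}. This does not affect the conclusion, because each reduced right-hand side is a polynomial with nonnegative exponents.
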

            \begin{proof}
                To obtain this surgery formula, we will apply the $\operatorname{PolPart}_{\mathcal{N}(\mathfrak{S}_0)}$ operator to the surgery formula in Theorem \ref{Thm:oRedZetaSurgery}. For this let $\mathfrak{S}_3\hookrightarrow\mathfrak{S}_0$. Since $\mathcal{N}(\mathfrak{S}_0)\subseteq\mathcal{N}(\overline{\mathfrak{S}_3}')$ we can write that
                \begin{align*}
                    \operatorname{PolPart}_{\mathcal{N}(\mathfrak{S}_0)}&\Big(\big(\varphi_{\overline{\mathfrak{S}_3},\overline{\mathfrak{S}_3}'}^*f_{\Gamma(\overline{\mathfrak{S}_3})}\big)|_{\mathcal{N}(\mathfrak{S}_0)}
                (\mathbf{t}_{\mathcal{N}(\mathfrak{S}_0)})\Big)=\\ &= \operatorname{PolPart}_{\mathcal{N}(\mathfrak{S}_0)}\Big(\big(\operatorname{PolPart}_{\mathcal{N}(\overline{\mathfrak{S}_3}')}(\varphi_{\overline{\mathfrak{S}_3},\overline{\mathfrak{S}_3}'}^*f_{\Gamma(\overline{\mathfrak{S}_3})}
                )\big)|_{\mathcal{N}(\mathfrak{S}_0)}(\mathbf{t}_{\mathcal{N}(\mathfrak{S}_0)}) \Big)\\
                &=\operatorname{PolPart}_{\mathcal{N}(\mathfrak{S}_0)}\Big(\big(\varphi_{\overline{\mathfrak{S}_3},\overline{\mathfrak{S}_3}'}^*\operatorname{Pol}_{\Gamma(\overline{\mathfrak{S}_3})}\big)(\mathbf{t}_{\mathcal{N}(\mathfrak{S}_0)})\Big),
                \end{align*}
                where in the second equality we used Corollary \ref{Lem:PolPartZHS}.
                Theorem \ref{Thm:oRedZetaSurgery} implies that 
                \[
                \operatorname{PolPart}_{\mathcal{N}(\mathfrak{S}_0)}\Big(\sum_{\mathfrak{S}_3\hookrightarrow\mathfrak{S}_0}(-1)^{|\mathfrak{S}_0|-|\mathfrak{S}_3|}\big(\varphi_{\overline{\mathfrak{S}_3},\overline{\mathfrak{S}_3}'}^*\operatorname{Pol}_{\Gamma(\overline{\mathfrak{S}_3})}\big)(\mathbf{t}_{\mathcal{N}(\mathfrak{S}_0)})\Big)
                \]
                is equal to the right hand side of the equation (\ref{eq:oRedPolSurgery}).  In the case $|\mathfrak{S}_0|=2$, the case $\emptyset\hookrightarrow\mathfrak{S}_0$ does not contribute to the sum, as
                the corresponding polynomial part is zero. Now we prove that in the above expression, the series inside $\operatorname{PolPart}_{\mathcal{N}(\mathfrak{S}_0)}$ is a polynomial, more precisely an element of $\Z[\mathbf{t}_{\mathcal{N}(\mathfrak{S}_0)}]$, which completes the proof.

                For each $\mathfrak{S}_1\subseteq\mathfrak{S}$ for which $\mathfrak{S}_0\subseteq\mathfrak{S}_1$, we have by Remark \ref{Rem:PolSurgeryReform} that
                \[
                \sum_{\mathfrak{S}_2\hookrightarrow\mathfrak{S}_1}(-1)^{|\mathfrak{S}_2|}\big(\varphi_{\mathfrak{S}_2,\mathfrak{S}_1}^*\operatorname{Pol}_{\Gamma(\mathfrak{S}_2)}\big)(\mathbf{t}_{\mathcal{N}(\mathfrak{S}_1)})\in\Z[\mathbf{t}_{\mathcal{N}(\mathfrak{S}_1)}].
                \]
                Further reducing to the nodes of $\mathfrak{S}_0$, we can write that
                \[
                    \sum_{\mathfrak{S}_2\hookrightarrow\mathfrak{S}_1}(-1)^{|\mathfrak{S}_2|}A_{\mathfrak{S}_2}\in\Z[\mathbf{t}_{\mathcal{N}(\mathfrak{S}_0)}],
                \]
                where by Lemma \ref{Lem:PolIndependence} we have that
                $A_{\mathfrak{S}_2} := \big(\varphi_{\mathfrak{S}_2,\mathfrak{S}_1}^*\operatorname{Pol}_{\Gamma(\mathfrak{S}_2)}\big)(\mathbf{t}_{\mathcal{N}(\mathfrak{S}_0)})$ is independent of the choice of $\mathfrak{S}_1$. By Lemma \ref{Lem:reverseIE} we have that
                \begin{align*}
                    \sum_{\mathfrak{S}_3\hookrightarrow\mathfrak{S}_0}(-1)^{|\mathfrak{S}_0|-|\mathfrak{S}_3|}\big(\varphi_{\overline{\mathfrak{S}_3},\overline{\mathfrak{S}_3}'}^*\operatorname{Pol}_{\Gamma(\overline{\mathfrak{S}_3})}\big)(\mathbf{t}_{\mathcal{N}(\mathfrak{S}_0)})=\sum_{\mathfrak{S}_0\subseteq \mathfrak{S}_1\subseteq\mathfrak{S}}(-1)^{|\mathfrak{S}_1|}\sum_{\mathfrak{S}_2\hookrightarrow\mathfrak{S}_1}(-1)^{|\mathfrak{S}_2|}A_{\mathfrak{S}_2}\in\Z[\mathbf{t}_{\mathcal{N}(\mathfrak{S}_0)}],
                \end{align*}
                as $\big(\varphi_{\overline{\mathfrak{S}_3},\overline{\mathfrak{S}_3}'}^*\operatorname{Pol}_{\Gamma(\overline{\mathfrak{S}_3})}\big)(\mathbf{t}_{\mathcal{N}(\mathfrak{S}_0)}) = A_{\overline{\mathfrak{S}_3}}$.
            \end{proof}
            Expanding the notation in (\ref{eq:oRedPolSurgery}) we have
            \[
                \big(\varphi_{\overline{\mathfrak{S}_3},\overline{\mathfrak{S}_3}'}^*\operatorname{Pol}_{\Gamma(\overline{\mathfrak{S}_3})}\big)(\mathbf{t}_{\mathcal{N}(\mathfrak{S}_0)}) = \Big(\varphi_{\overline{\mathfrak{S}_3},\overline{\mathfrak{S}_3}'}^*\big(\operatorname{Pol}_{\Gamma(\overline{\mathfrak{S}_3})}(\mathbf{t}_{\mathcal{N}(\overline{\mathfrak{S}_3})\cup\Theta_{\overline{\mathfrak{S}_3},\overline{\mathfrak{S}_3}'}})\big)\Big)\Big|_{t_v\mapsto 1,v\notin\mathcal{N}(\mathfrak{S}_0)}.
            \]
            In other words $\big(\varphi_{\overline{\mathfrak{S}_3},\overline{\mathfrak{S}_3}'}^*\operatorname{Pol}_{\Gamma(\overline{\mathfrak{S}_3})}\big)(\mathbf{t}_{\mathcal{N}(\mathfrak{S}_0)})$ is the polynomial part of $f_{\Gamma(\overline{\mathfrak{S}_3})}(\mathbf{t}_{\mathcal{V}(\Gamma(\overline{\mathfrak{S}_3}))})$ reduced to the variables given by the domain of $\varphi_{\overline{\mathfrak{S}_3},\overline{\mathfrak{S}_3}'}$, to which the corresponding variable change is applied to; and finally we reduce to the variables $\mathcal{N}(\mathfrak{S}_0)$. This, in general, is not equal to the polynomial part of the over-reduced rational function 
            \[
            (\varphi_{\overline{\mathfrak{S}_3},\overline{\mathfrak{S}_3}'}^*
            f_{\Gamma(\overline{\mathfrak{S}_3})})|_{\mathcal{N}(\mathfrak{S}_0)}(\mathbf{t}_{\mathcal{N}(\mathfrak{S}_0)}) = \Big(\varphi_{\overline{\mathfrak{S}_3},\overline{\mathfrak{S}_3}'}^*\big(f_{\Gamma(\overline{\mathfrak{S}_3})}(\mathbf{t}_{\mathcal{N}(\overline{\mathfrak{S}_3})\cup\Theta_{\overline{\mathfrak{S}_3},\overline{\mathfrak{S}_3}'}})\big)\Big)\Big|_{t_v\mapsto 1,v\notin\mathcal{N}(\mathfrak{S}_0)};
            \]
            as stated in the beginning of this section.
            
            \begin{Rem}
                If $\mathfrak{S} = \mathfrak{S}_0$, for any $\mathfrak{S}_3\hookrightarrow\mathfrak{S}_0 = \mathfrak{S}$ we have $\overline{\mathfrak{S}_3} = \mathfrak{S}_3$ and $\overline{\mathfrak{S}_3}' = \mathfrak{S}$. Therefore, in this case,  Theorems \ref{Thm:PolSurgery} and  \ref{Thm:zetaSurgery:ZHS} are identical with  Theorems \ref{Thm:oRedPolSurgery} and \ref{Thm:oRedZetaSurgery} respectively.
            \end{Rem}
            \subsubsection{\bf Splicing formula for two variables}\label{ss:two}
            Let $\mathfrak{S}$ be a splice diagram. We chose two connected nodes $n_1$, $n_2$ of $\mathfrak{S}$ and let $\mathfrak{S}_0$ be the complete sub-diagram given by $\{n_1,n_2\}$. Furthermore, let $\mathfrak{S}_1$ and $\mathfrak{S}_2$ be the splice sub-diagrams of $\mathfrak{S}$ obtained by cutting the edge $n_1\sim n_2$, i.e., they contain $n_1,n_2$ respectively. 
            That is, $\mathfrak{S}$  is constructed from $\mathfrak{S}_1$ and $\mathfrak{S}_2$
            by the `classical splicing' along the edge  $n_1\sim n_2$. 
            Note that $\mathfrak{S}_1$ and $\mathfrak{S}_2$ correspond to the full extensions (with respect to $\mathfrak{S}_0$) of the complete sub-diagrams with the only node $n_1$, $n_2$ respectively. In particular $\mathfrak{S}_k = \mathfrak{S}_0(n_k)$, $k=1,2$. Consider the Alexander polynomials $\Delta_1,\Delta_2$ of the graph links given by $\mathfrak{S}_1$, $\mathfrak{S_2}$ respectively, with an arrow placed on the leg obtained from $n_1\sim n_2$. A direct application of Theorem \ref{Thm:oRedPolSurgery} yields the following.
            \begin{Cor}
                We have 
                \begin{align*}
                    \operatorname{Pol}_{\Gamma(\mathfrak{S})}(t_{n_1},t_{n_2}) - \big(\varphi_{\mathfrak{S}_1,\mathfrak{S}_1'}^*\operatorname{Pol}_{\Gamma(\mathfrak{S}_1)}\big)(t_{n_1},t_{n_2})&-\big(\varphi_{\mathfrak{S}_2,\mathfrak{S}_2'}^*\operatorname{Pol}_{\Gamma(\mathfrak{S}_2)}\big)(t_{n_1},t_{n_2})=\\ &= \frac{1-\Delta_1(\mathbf{t}^{U_1})}{1-\mathbf{t}^{U_1}}\cdot\frac{1-\Delta_2(\mathbf{t}^{U_2})}{1-\mathbf{t}^{U_2}},
                \end{align*}
                where $\mathfrak{S}_1',\mathfrak{S}_2'\subseteq\mathfrak{S}$ are given by $\mathcal{N}(\mathfrak{S}_k') = \mathcal{N}(\mathfrak{S}_k)\cup\mathcal{N}(\mathfrak{S}_0)$, and
                \[
                    U_1 = d_{n_1,n_2}E_{n_1}+w(\mathfrak{S}[n_1,n_2))E_{n_2},\quad U_2 = w(\mathfrak{S}[n_2,n_1))E_{n_1}+d_{n_2,n_1}E_{n_2}.
                \]
            \end{Cor}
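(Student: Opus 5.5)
This is a direct specialization of Theorem \ref{Thm:oRedPolSurgery} to $\mathfrak{S}_0=\{n_1,n_2\}$. Here $|\mathfrak{S}_0|=2$, so $\mathcal{N}_{\mathrm{n}}(\mathfrak{S}_0)=\emptyset$ and $\mathcal{N}_{\mathrm{e}}(\mathfrak{S}_0)=\{n_1,n_2\}$. By the convention of the Notation preceding Theorem \ref{Thm:oRedZetaSurgery}, the peels $\mathfrak{S}_3\hookrightarrow\mathfrak{S}_0$ are $\mathfrak{S}_0$, $\{n_1\}$, $\{n_2\}$ and $\emptyset$. The $\emptyset$ term contributes $0$ by definition. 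The sign $(-1)^{|\mathcal{N}_{\mathrm{e}}(\mathfrak{S}_0)|}$ on the right equals $+1$, and the signs on the left are $+1$ for $\mathfrak{S}_0$ and $-1$ for the two one-node peels. This already gives the shape of the identity.

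Next I identify each term. For $\mathfrak{S}_3=\mathfrak{S}_0$ we have $\overline{\mathfrak{S}_0}=\overline{\mathfrak{S}_0}'=\mathfrak{S}$, and $\varphi_{\mathfrak{S},\mathfrak{S}}^*$ is the identity by Lemma \ref{lem:varRestr}. The term is therefore $\operatorname{Pol}_{\Gamma(\mathfrak{S})}$ reduced to $\mathbf{t}_{\{n_1,n_2\}}$, via (\ref{eq:polred}) and reduction. For $\mathfrak{S}_3=\{n_k\}$, the full extension $\overline{\mathfrak{S}_3}$ is the component of $\mathfrak{S}\setminus\{n_{3-k}\}$ containing $n_k$. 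As in Step 1 of Lemma \ref{Lem:overreducedZetaExpression}, it has nodes $\mathcal{N}(\mathfrak{S}_0(n_k))$; this is exactly the side $\mathfrak{S}_k$ obtained by cutting the edge $n_1\sim n_2$. Then $\overline{\mathfrak{S}_3}'=\mathfrak{S}_k'$ with $\mathcal{N}(\mathfrak{S}_k')=\mathcal{N}(\mathfrak{S}_k)\cup\{n_1,n_2\}$, which matches the two subtracted terms.

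For the right-hand side, each factor is $\bigl(1-\Delta_{\mathfrak{S}_0(n_k)}(\mathbf{t}^{U_{n_k}})\bigr)/(1-\mathbf{t}^{U_{n_k}})$. Since $n_k$ is an end-node of $\mathfrak{S}_0$, $\Delta_{\mathfrak{S}_0(n_k)}$ is by definition the one-variable Alexander polynomial of the link in $\mathfrak{S}_0(n_k)=\mathfrak{S}_k$ with an arrow on the leg coming from the cut edge; this is $\Delta_k$. It remains to compute $U_{n_k}=\frac{1}{\omega_{n_k,\mathfrak{S}_0}}E^*_{n_k,\Gamma_0}|_{\{n_1,n_2\}}$. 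I would use the splice-diagram formula for the coefficients $m_u(E_v^*)=w(\mathfrak{S}_0[u,v])$ (Eisenbud--Neumann, Lemma 20.2). In $\mathfrak{S}_0$, the node $n_1$ has the edge to $n_2$ weighted $d_{n_1,n_2}$, and all its other edges have become legs with their original weights. Hence $m_{n_1}(E^*_{n_1})=d_{n_1,n_2}\,\omega_{n_1,\mathfrak{S}_0}$ and $m_{n_2}(E^*_{n_1})=w(\mathfrak{S}[n_1,n_2))\,\omega_{n_1,\mathfrak{S}_0}$. Dividing by $\omega_{n_1,\mathfrak{S}_0}$ gives $U_1$, and symmetrically we get $U_2$.

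The only delicate point is bookkeeping of weights. One must check that $w(\mathfrak{S}_0[n_1,n_2])$, divided by the leg product at $n_1$, gives exactly $w(\mathfrak{S}[n_1,n_2))$. This holds because the weights around $n_2$ off the path coincide in $\mathfrak{S}_0$ and $\mathfrak{S}$, the complete sub-diagram keeping the original decorations.
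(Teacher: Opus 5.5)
Your route is the paper's: the corollary is stated there as a direct specialization of Theorem~\ref{Thm:oRedPolSurgery} to $\mathfrak{S}_0=\{n_1,n_2\}$. The following identifications in your proposal are all correct:
the four peels and their signs; the full extensions $\overline{\{n_k\}}=\mathfrak{S}_0(n_k)=\mathfrak{S}_k$ with $\overline{\{n_k\}}'=\mathfrak{S}_k'$; the identity change of variables for $\mathfrak{S}_3=\mathfrak{S}_0$; and $\Delta_{\mathfrak{S}_0(n_k)}=\Delta_k$.

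\textbf{The final weight bookkeeping does not hold.} In the paper's convention $w(\mathfrak{S}[u,v))$ omits the weights surrounding $v$. Hence $w(\mathfrak{S}[n_1,n_2))=\prod_{u\sim n_1,\,u\neq n_2}d_{n_1,u}=\omega_{n_1,\mathfrak{S}_0}$ is the product of the weights at $n_1$. Your own computation gives $m_{n_2}(E^*_{n_1,\Gamma_0})=w(\mathfrak{S}_0[n_2,n_1])=\omega_{n_1,\mathfrak{S}_0}\,\omega_{n_2,\mathfrak{S}_0}$. Its quotient by $\omega_{n_1,\mathfrak{S}_0}$ is the product of the weights at $n_2$ off the edge, i.e.\ $w(\mathfrak{S}[n_2,n_1))$. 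That is exactly what your justification (``the weights around $n_2$ off the path'') describes. So the identity you assert, $w(\mathfrak{S}_0[n_1,n_2])/\omega_{n_1,\mathfrak{S}_0}=w(\mathfrak{S}[n_1,n_2))$, is false in general. What the theorem actually yields is
\[
U_1=d_{n_1,n_2}E_{n_1}+w(\mathfrak{S}[n_2,n_1))E_{n_2},\qquad U_2=w(\mathfrak{S}[n_1,n_2))E_{n_1}+d_{n_2,n_1}E_{n_2}.
\]

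The paper's own example confirms this. For $\mathfrak{S}_0=\{E_2,E_3\}$ in Figure~\ref{fig:OverSplice3} one has $w(\mathfrak{S}[E_2,E_3))=1\cdot 2=2$ and $w(\mathfrak{S}[E_3,E_2))=4\cdot 5=20$. The example uses $U_1=15E_2+20E_3$ and $U_2=2E_2+3E_3$, matching the corrected formulas. So the displayed formulas for $U_1,U_2$ in the statement have the two $w$-factors interchanged relative to the paper's definition. This is a notational slip in the statement, not a problem with the underlying theorem.

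Do not read $[\,\cdot\,,\cdot\,)$ with the opposite convention to force agreement. Instead, state the correct $U_k$ as computed from $U_n=\frac{1}{\omega_{n,\mathfrak{S}_0}}E^*_{n,\Gamma_0}|_{\mathcal{N}(\mathfrak{S}_0)}$ and flag the discrepancy. With that correction your argument is complete.
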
  
        
        \begin{example}
    		Let us consider again the splice diagram in the left hand side of figure \ref{fig:OverSplice3}, with $\mathfrak{S}_0$ given by the nodes $\{E_2,E_3\}$. Thus the diagrams $\mathfrak{S}_1$, $\mathfrak{S}_2$ are obtained by cutting the edge $2\sim3$. Hence,  $\mathfrak{S}_1$ contains  the nodes $\{E_1,E_2\}$, and $\mathfrak{S}_2$ the rest of the nodes, as in figure \ref{fig:OverSplice3:2}.
    		The normalized Alexander polynomials are
    		\begin{align*}
    			\Delta_1(t)=\Delta_{\mathfrak{S}_0(2)}(t) &= (1-t)\frac{1-t^2}{1-t}\frac{1-t^{12}}{(1-t^4)(1-t^6)}\\
    			\Delta_2(t)=\Delta_{\mathfrak{S}_0(3)}(t) &= (1-t)(1-t^{20})\frac{1-t^{24}}{(1-t^8)(1-t^{12})}\frac{1-t^{10}}{1-t^5}\frac{1-t^{60}}{(1-t^{30})(1-t^{20})}.
    		\end{align*}
    		Thus
    		\begin{align*}
    			\operatorname{Pol}_{\Gamma(\mathfrak{S})}(t_{2},t_{3}) - \big(\varphi_{\mathfrak{S}_1,\mathfrak{S}_1'}^*\operatorname{Pol}_{\Gamma(\mathfrak{S}_1)}\big)(t_{2},t_{3})&-\big(\varphi_{\mathfrak{S}_2,\mathfrak{S}_2'}^*\operatorname{Pol}_{\Gamma(\mathfrak{S}_2)}\big)(t_{2},t_{3})=\\ &= \frac{1-\Delta_1(\mathbf{t}^{U_1})}{1-\mathbf{t}^{U_1}}\cdot\frac{1-\Delta_2(\mathbf{t}^{U_2})}{1-\mathbf{t}^{U_2}};
    		\end{align*}
    		where $U_1 = 15E_2+20E_3$, $U_2=2E_2+3E_3$. Note that
            $1-\Delta_1(t) =(1-t)(t^2+t^3)$, and $1-\Delta_2(t) = (1-t)(t + t^2 + t^3 + t^4 + t^6 + t^7 + t^9 + t^{10} + t^{11} + t^{14} + t^{15} + t^{18} + t^{19} + t^{22} + t^{23} + t^{26} + t^{27} + t^{31} + t^{34} + t^{39})$.
    		\begin{figure}
    			\centering
    			\renewcommand{\graphscale}{1.25cm}
    			\begin{tikzpicture}[roundnode/.style={circle, draw=black, fill=black, very thick, inner sep = 1},line width=1,x=\graphscale,y=\graphscale]
    				\node[roundnode] (3) {};
    				\path (3)++(1,0) node[roundnode] (4) {};
    				\path (3)++(0,-1) node[roundnode] (5) {};
    				\path (3)++(-3,0) node[roundnode] (2) {};
    				\path (4)++(1,0) node[roundnode] (6) {};
    				\path (4)++(0,1) node[roundnode] (12) {};
    				\path (5)++(-60:1) node[roundnode] (11) {};
    				\path (5)++(-120:1) node[roundnode] (10) {};
    				\path (2)++(-1,0) node[roundnode] (1) {};
    				\path (2)++(0,1) node[roundnode] (9) {};
    				\path (6)++(60:1) node[roundnode] (13) {};
    				\path (6)++(-60:1) node[roundnode] (14) {};
    				\path (1)++(120:1) node[roundnode] (7) {};
    				\path (1)++(-120:1) node[roundnode] (8) {};
    				\draw (7)--(1)--(8) (1)--(2)--(9) (10)--(5)--(11) (5)--(3)--(6) (4)--(12) (14)--(6)--(13);
    				\draw (2)-++(1,0) node[roundnode] {};
    				\draw (3)-++(-1,0) node[roundnode] {};
    
    				\begin{scriptsize}
    					\renewcommand{\r}{0.25}
    					\foreach \vertex/\angle/\weight in {%
    						1/-35/181,1/-150/3,1/150/2,
    						2/-145/1,2/120/2,2/35/15,
    						3/150/3,3/35/4,3/-120/5,
    						4/145/9,4/-35/1,4/65/2,
    						5/140/17,5/-150/2,5/-30/3,
    						6/-145/109,6/30/2,6/-30/3
    					}{
    						\path (\vertex) ++ (\angle:\r) node {$\weight$};
    					}
    					\renewcommand{\r}{0.3}
    					\foreach \vertex/\angle in {%
    						1/60,2/-80,3/-50,5/50,4/-120,6/120
    					}{
    						\path (\vertex) ++ (\angle:\r) node {$E_{\vertex}$};
    					}
    				\end{scriptsize}
    			\end{tikzpicture}
    			\caption{}
    			\label{fig:OverSplice3:2}
    		\end{figure} 
	    \end{example}
        
        \phantom{\cite{*}}

      \section{Application to the periodic constant}
        \subsection{\bf A short detour on the periodic constant}\label{ss:6.1}
    \cite{LNN19,LN14Erhart}  First
     we give a short history of the {\it periodic constant} of series.

  \subsubsection{}\label{sss:6.1.1}  {\bf The one variable case.} 
    The periodic constant of a formal power series in one variable was introduced in \cite{NO09,O08} as follows. Assume that for $S = \sum c_lt^l\in \Z[[t]]$ there exists a positive integer $p$ such that $Q_p(n)=\sum_{l< np}c_l$ is a polynomial in $n$. Then the constant term $\operatorname{pc}(S)=Q_p(0)$ is independent of $p$ and is called the periodic constant of $S$. For example, if $S(t)\in\Z[t]$ then for  $p\gg 1$ 
    the sum 
     $Q_p(n)$ is the constant $S(1)$, hence $\operatorname{pc}(S)=S(1)$. 

    The intuitive meaning of the periodic constant is well shown  by the following example. Let $S$ be the Hilbert series of a graded algebra/vector space $A = \bigoplus_{l\ge0} A_l$, that is, $c_l = \dim A_l$. Assume that $S$ admits a Hilbert quasi-polynomial $Q(l)$, meaning that $c_l = Q(l)$ for large enough $l$. Then (by a computation) 
    the periodic constant of the regularized series $S_{\operatorname{reg}} = \sum Q(l)t^l$ is zero, thus $\operatorname{pc}S$ measures the difference  between $S$ and $S_{\operatorname{reg}}$ (which is a polynomial), that is $\operatorname{pc}(S) = (S(t) - S_{\operatorname{reg}}(t))\mid_{t=1}$.
    
    In   \cite{BN10} one can find the next interpretation of the periodic constant too. 
    Assume  that the series $S(t)$ can be expressed as a rational function $B(t)/A(t)$ in $t$, where $A(t) = \prod_i(1-t^{a_i})$. Rewriting the series as $C(t) + D(t)/A(t)$, where the degree of $D(t)$ is smaller than the degree of  $A(t)$ we get  $S_{\operatorname{reg}} = D(t)/A(t)$ and   $S-S_{\operatorname{reg}}=C(t)$, 
    hence $\operatorname{pc}S=C(1)$.

 \subsubsection{}   {\bf The multivariable case.} 
    The periodic constant was generalized to the multivariable case in \cite{LN14Erhart}. Here we focus on the Taylor expansion  $Z(\mathbf{t})=\sum_l z(l)\mathbf{t}^l$ of (\ref{eq:zeta:def}) (hence we use the terminology from Section \ref{ss:2.1}). 
    Assume that there is a real cone $\mathcal{K}\subset L\otimes\R$, with affine closure of top dimension,  and also   a sublattice $\widetilde{L}\subseteq L$ with finite index and $l_*\in\mathcal{K}$ such that $Q(l)=\sum_{\bar{l}\not\geq l}z(\bar{l})$ is equal to a quasi-polynomial $\mathcal{P}^{\mathcal{K}}(l)$ for all $l\in \widetilde{L}\cap(l_*+\mathcal{K})$. Then the periodic constant of $Z(\mathbf{t})$ associated with the cone $\mathcal{K}$ is 
    \[
        \operatorname{pc}^{\mathcal{K}}(Z) = \mathcal{P}^{\mathcal{K}}(0).
    \]
    This definition does not depend on $\widetilde{L}$ (its choice is the analogue for the choice of $p$ in the single variable case).

    In the case of $Z(\mathbf{t})=Z_\Gamma(\mathbf{t})$ there is a distinguished cone, the {\it real Lipman cone}, defined as  $\mathcal{K}=\mathcal{S}_{\R} =\mathcal{S}\otimes\R$, where 
    $\mathcal{S}:=\{l\in L\,:\, (l,E_v)\leq 0 \ \mbox{for all $v$}\}$ as in Section \ref{ss:2.3}. 
    In \cite{LN14Erhart} it was  proved  that for $\mathcal{K}=\mathcal{S}_{\R}$ the periodic constant (denoted also by $\operatorname{pc}(Z(\mathbf{t}))$)  is well-defined  and 
        \[
            \operatorname{pc}(Z_\Gamma(\mathbf{t})) = \operatorname{Pol}_{\Gamma}({\mathbf{t}})|_{t_v=1 \ \mbox{for all $v$ }}.
        \]   
    Next,
    we can consider the series $Z_\Gamma(\mathbf{t}_{\mathcal{N}})$, $Z_\Gamma(\mathbf {t})$
    reduced to the variables of the nodes $\{t_n\}_{n\in\mathcal{N}(\Gamma)}$. Then, by \cite{LN14Erhart}, 
    the periodic constant of  $Z_\Gamma(\mathbf {t}_{\mathcal{N}})$
     associated with the cone $\mathcal{S}_{\R}\cap \R\langle E_n, \ n\in\mathcal{N}(\Gamma)\rangle$
     is well-defined and it also equals  $\operatorname{pc}(Z_\Gamma(\mathbf{t})) = 
     \operatorname{Pol}_{\Gamma}(1)$. 

    All these facts can be applied for the graphs $\Gamma_I$ and the 
    corresponding series $Z_{\Gamma_I}$ (considered in section
    \ref{s:surgtopseries}) as well. In particular, 
    $\operatorname{pc}(Z_{\Gamma_I}) = \operatorname{Pol}_{\Gamma_I}(1)$.

    Finally notice  that, since $\varphi_I^*$ is an invertible linear map, one also has 
    \[
        (\varphi^*_I\operatorname{Pol}_{\Gamma_I}) (\mathbf{t}_{\mathcal{N}(\Gamma)})\big|_{t_v=1, \ v\in \mathcal{N}(\Gamma)}= 
    \operatorname{Pol}_{\Gamma_I} (\mathbf{t}_{\mathcal{N}(\Gamma_I)})\big|_{t_v=1, \ v\in \mathcal{N}(\Gamma_I)}=  \operatorname{pc}(Z_{\Gamma_I}).
    \]
    Therefore,  if we substitute $\mathbf{t}_{\mathcal{N}(\Gamma)}\mapsto 1$ in (\ref{eq:surgerypol}), we get the next {\it surgery formula for the periodic constant of the topological Poincar\'e series} involving all the graphs $\{\Gamma_I\}_{I\subset \mathcal{N}_{\mathrm{e}}}$.
    \begin{Thm}\label{thm:6a}
     
            \begin{equation}\label{eq:surgerypc}
                \operatorname{pc}(Z_{\Gamma}) +\sum_{I\subsetneq \mathcal{N}_{\mathrm{e}}}(-1)^{|\mathcal{N}_{\mathrm{e}}\setminus I|}\operatorname{pc}(Z_{\Gamma_I})  = (-1)^{|\mathcal{N}_{\mathrm{e}}|} \prod_{n\in\mathcal N}P_n(1).
            \end{equation}
    \end{Thm}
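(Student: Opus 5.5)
The plan is to specialize Theorem \ref{Thm:PolSurgery} at $\mathbf{t}_{\mathcal{N}}=1$ and identify each term with a periodic constant, as prepared in \ref{ss:6.1}. All terms in (\ref{eq:surgerypol}) are Laurent polynomials in $\mathbf{t}_{\mathcal{N}}$: $\operatorname{Pol}_\Gamma(\mathbf{t}_{\mathcal{N}})$ by definition, each $(\varphi_I^*\operatorname{Pol}_{\Gamma_I})(\mathbf{t}_{\mathcal{N}})$ because $\varphi_I^*$ is a monomial change of variables with rational exponents, and the right hand side because $P_n\in\Z[t]$. Hence the substitution $t_n=1$ for all $n\in\mathcal{N}$ is well defined, including for fractional exponents, and commutes with the finite sums.

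Next I identify the terms. First, $\operatorname{Pol}_\Gamma(\mathbf{t}_{\mathcal{N}})|_{t=1}=\operatorname{Pol}_\Gamma(1)=\operatorname{pc}(Z_\Gamma)$ by \cite{LN14Erhart}. For $I\subsetneq\mathcal{N}_{\mathrm{e}}$, the change of variables $\varphi_I^*$ sends a monomial $\mathbf{t}_{J_I}^l$ to $\mathbf{t}_{\mathcal{N}}^{\varphi_I(l)}$, and both evaluate to $1$. Therefore
$$(\varphi_I^*\operatorname{Pol}_{\Gamma_I})(1)=\operatorname{Pol}_{\Gamma_I}(\mathbf{t}_{J_I})|_{t=1}=\operatorname{Pol}_{\Gamma_I}(1)=\operatorname{pc}(Z_{\Gamma_I}),$$
using again \cite{LN14Erhart} applied to $\Gamma_I$. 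The graph $\Gamma_I$ is an integral homology sphere plumbing produced from a possibly non-minimal splice diagram, and the cited result applies to it. For $|\mathcal{N}|=2$ and $I=\emptyset$ the convention $\varphi_\emptyset^*\operatorname{Pol}_{\Gamma_\emptyset}=0$ is used; I will check that this is consistent with $\operatorname{pc}(Z_{\Gamma_\emptyset})=0$, which holds because $\Gamma_\emptyset$ is empty or a bamboo with trivial polynomial part. Finally, $x_n=\mathbf{t}_{\mathcal{N}}^{X_n}\mapsto 1$, so the right hand side becomes $\prod_n P_n(1)$.

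The last step is sign bookkeeping. Multiply the specialized identity by $(-1)^{|\mathcal{N}_{\mathrm{e}}|}$. Since $(-1)^{|\mathcal{N}_{\mathrm{e}}|+|I|}=(-1)^{|\mathcal{N}_{\mathrm{e}}\setminus I|}$, this gives exactly (\ref{eq:surgerypc}). The only delicate point is the justification that evaluation at $1$ commutes with $\varphi_I^*$, which is purely formal since $\varphi_I^*$ acts monomially. The degenerate $I=\emptyset$ case also needs the separate check described above.
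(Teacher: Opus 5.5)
Your proposal is correct and matches the paper's argument. Like the paper, you specialize (\ref{eq:surgerypol}) at $\mathbf{t}_{\mathcal{N}}=1$, use $\operatorname{pc}(Z)=\operatorname{Pol}(1)$ from \cite{LN14Erhart} for $\Gamma$ and each $\Gamma_I$, note that the monomial change of variables $\varphi_I^*$ commutes with evaluation at $1$, and multiply by $(-1)^{|\mathcal{N}_{\mathrm{e}}|}$. Your explicit check of the degenerate term ($|\mathcal{N}|=2$, $I=\emptyset$) against the convention $\varphi_\emptyset^*\operatorname{Pol}_{\Gamma_\emptyset}=0$ is a harmless addition that the paper leaves implicit.
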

    Similarly, in the context of Section \ref{s:OverRedSurg}, Theorem \ref{Thm:oRedPolSurgery} implies
    \begin{Thm}\label{thm:6b}
        \begin{equation}\label{eq:surgerypc_b}
                    \sum_{\mathfrak{S}_3\hookrightarrow\mathfrak{S}_0}(-1)^{|\mathfrak{S}_0|-|\mathfrak{S}_3|}\operatorname{pc}(Z_{\Gamma(\overline{\mathfrak{S}_3})})= (-1)^{|\mathcal{N}_{\mathrm{e}}(\mathfrak{S}_0)|}\prod_{n\in\mathcal{N}_{\mathrm{n}}(\mathfrak{S}_0)}\Delta_{\mathfrak{S}_0(n)}(1)\cdot \prod_{n\in\mathcal{N}_{\mathrm{e}}(\mathfrak{S}_0)}\Delta'_{\mathfrak{S}_0(n)}(1).
        \end{equation}
    \end{Thm}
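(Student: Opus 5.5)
The plan is to substitute $\mathbf{t}_{\mathcal{N}(\mathfrak{S}_0)}\mapsto 1$ in the identity (\ref{eq:oRedPolSurgery}) of Theorem \ref{Thm:oRedPolSurgery}, exactly as Theorem \ref{thm:6a} was obtained from (\ref{eq:surgerypol}). Both sides of (\ref{eq:oRedPolSurgery}) are elements of $\Z[\mathbf{t}_{\mathcal{N}(\mathfrak{S}_0)}]$; the left side because each summand is a reduction of a genuine polynomial, the right side by the proof of Theorem \ref{Thm:oRedPolSurgery}. Hence the substitution is legitimate. Here $\Delta'_{\mathfrak{S}_0(n)}$ should be read as the polynomial $(1-\Delta_{\mathfrak{S}_0(n)}(t))/(1-t)$, in analogy with $P_n$ in (\ref{eq:Pn_EndNode}). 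If the paper instead means the derivative, the two differ only by the sign, since $\lim_{t\to1}(1-\Delta(t))/(1-t)=\Delta'(1)$. I would check this sign convention against the stated $(-1)^{|\mathcal{N}_{\mathrm{e}}(\mathfrak{S}_0)|}$ factor. Note also that for $n\in\mathcal{N}_{\mathrm{e}}(\mathfrak{S}_0)$ the link $\mathcal{L}_n$ has one component, so $\Delta_{\mathfrak{S}_0(n)}(1)=1$, and the quotient is indeed a polynomial.

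\textbf{Step 1 (left side).} I will show that
\[
\big(\varphi_{\overline{\mathfrak{S}_3},\overline{\mathfrak{S}_3}'}^*\operatorname{Pol}_{\Gamma(\overline{\mathfrak{S}_3})}\big)(\mathbf{t}_{\mathcal{N}(\mathfrak{S}_0)})\big|_{\mathbf{t}=1}=\operatorname{pc}(Z_{\Gamma(\overline{\mathfrak{S}_3})}).
\]
Unwinding the notation, this expression is obtained in three stages. First, $\operatorname{Pol}_{\Gamma(\overline{\mathfrak{S}_3})}$ is reduced to the domain variables $\mathcal{N}(\overline{\mathfrak{S}_3})\cup\Theta$. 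Next, the monomial change of variables $\varphi^*$ is applied. Finally, the variables outside $\mathcal{N}(\mathfrak{S}_0)$ are set to $1$.

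Each stage preserves the value at $\mathbf{t}=1$. Reductions are substitutions $t_v=1$, so they preserve it trivially. The map $\varphi^*$ sends each monomial to a monomial, hence sends the all-ones point to the all-ones point, so it also preserves it. The only point to check is that $\varphi^*$ takes monomials with rational exponents to finite sums of monomials, so evaluation at $1$ remains well defined.

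The resulting value is therefore $\operatorname{Pol}_{\Gamma(\overline{\mathfrak{S}_3})}(1)$. By \cite{LN14Erhart}, as recalled in \ref{ss:6.1}, this equals $\operatorname{pc}(Z_{\Gamma(\overline{\mathfrak{S}_3})})$. In the degenerate case $|\mathfrak{S}_0|=2$ and $\mathfrak{S}_3=\emptyset$, the term is $0$ by convention. I would treat it as $\operatorname{pc}$ of the empty graph, equal to $0$, or simply exclude it from the sum.

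\textbf{Step 2 (right side).} The nodes $n\in\mathcal{N}_{\mathrm{n}}(\mathfrak{S}_0)$ contribute $\Delta_{\mathfrak{S}_0(n)}(1)$ directly. The end-nodes $n\in\mathcal{N}_{\mathrm{e}}(\mathfrak{S}_0)$ contribute the value at $t=1$ of $(1-\Delta_{\mathfrak{S}_0(n)}(t))/(1-t)$. By l'H\^opital, or by writing $1-\Delta=(1-t)Q(t)$, this value is $\Delta'_{\mathfrak{S}_0(n)}(1)$ up to the sign convention already mentioned.

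The main obstacle is only bookkeeping. One must confirm that evaluating at $1$ commutes with the restriction and change-of-variables operations, as verified in Step 1. One must also fix the sign convention for $\Delta'$ so that it matches the stated $(-1)^{|\mathcal{N}_{\mathrm{e}}(\mathfrak{S}_0)|}$ factor.
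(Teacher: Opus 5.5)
Your proposal is correct and follows the paper's route. You substitute $\mathbf{t}_{\mathcal{N}(\mathfrak{S}_0)}=1$ in (\ref{eq:oRedPolSurgery}), use that reductions and the monomial change of variables $\varphi^*$ preserve the value at $1$, and use $\operatorname{Pol}_{\Gamma}(1)=\operatorname{pc}(Z_\Gamma)$ from \cite{LN14Erhart}. Two small corrections: no sign adjustment is needed, because $\Delta_{\mathfrak{S}_0(n)}(1)=1$ gives $\big((1-\Delta_{\mathfrak{S}_0(n)}(t))/(1-t)\big)\big|_{t=1}=\Delta'_{\mathfrak{S}_0(n)}(1)$ exactly (the derivative); and individual summands on the left are in general Laurent polynomials rather than elements of $\Z[\mathbf{t}]$ (cf.\ the term $t_2t_3^{-1}$ in the example following Theorem \ref{Thm:PolSurgery}), which is harmless for evaluation at $1$.
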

Using Lemma \ref{Lem:AlexanderMultipleFromSingle}, the right hand side of (\ref{eq:surgerypc_b})
can be expressed (after a longer computation) in terms of the
decorations of the splice diagram. However, we decided to keep (\ref{eq:surgerypc_b})
in the above short and compact form (and not to add any additional technical reinterpretation).

\subsubsection{\bf Connection with  the Casson's invariants and the $(Z_K^2+|\mathcal{V}|)$ invariant 
of the link}
     
     Let $\lambda(M)$ be the Casson's invariant of the 3-manifold $M$, see eg. \cite{A14casson}. 
       
    In our case, when $M=M(\Gamma)$ is a negative definite  plumbed 3--manifold, it is given in terms of $\Gamma$ by 
    (see eg. \cite{nemethi2022normal}) 
            \[
                -24\lambda(M(\Gamma)) = \sum_{v\in\mathcal{V}}(3+E_v^2) - (\delta_v-2)(E_v^*,E_v^*).
            \]
     In   \cite{A14casson,Ls96global} (see also \cite{nemethi2022normal}), whenever $M$ is an $\Z HS^3$, it is proved that 
            \begin{equation}\label{eq:lambda}
             {\rm pc} (Z_\Gamma)=  
             -\lambda(M(\Gamma))-\frac{Z_K^2+|\mathcal{V}(\Gamma)|}{8}.
            \end{equation}
    If we substitute this identity into (\ref{eq:surgerypc}) and (\ref{eq:surgerypc_b}) we obtain  surgery formulae for 
    $\lambda(M(\Gamma))+(Z_K^2+|\mathcal{V}(\Gamma)|)/8$. However, the Casson's invariant 
    satisfies the classical splice formula: if the integral homology sphere $M$ is obtained by splicing the 
    3-manifolds $M_1$ and $M_2$ then $\lambda(M)=\lambda(M_1)+\lambda(M_2)$, see \cite{BN,FM}. 
    A computation shows that if we substitute in the left hand side 
    of  (\ref{eq:surgerypc}) and (\ref{eq:surgerypc_b})
    the identity (\ref{eq:lambda}) then all the contributions from Casson's invariant will cancel because of 
    the previous additivity property.
     For this denote $M_n$ the manifold corresponding to the complete subdiagram of $\mathfrak{S}$ with the only node $n$. Then $\lambda(M(\mathfrak{S})) = \sum_{n\in\mathcal{N(\mathfrak{S})}}\lambda(M_n)$. Due to this identity, we have that
    \[
        \sum_{\mathfrak{S'}\hookrightarrow\mathfrak{S}}(-1)^{|\mathfrak{S}'|}\lambda(M(\mathfrak{S}')) = 0,
    \]
    from which Lemma \ref{Lem:reverseIE} yields that 
    \[
        \sum_{\mathfrak{S}_3\hookrightarrow\mathfrak{S}_0}(-1)^{|\mathfrak{S}_0|-|\mathfrak{S}_3|}\lambda(M(\overline{\mathfrak{S}_3})) = 0.
    \]
    In particular,  (\ref{eq:surgerypc}) and (\ref{eq:surgerypc_b})
    transforms into a surgery formula of $Z_K^2+|\mathcal{V}|$. 
    The second version (\ref{eq:surgerypc_b}) applied in the situation of Section \ref{ss:two} recovers the 
    the known formula from \cite{NW} (see also \cite[Theorem 6.3.15]{nemethi2022normal}).

    
    \bibliographystyle{plain}

\begin{thebibliography}{10}
    
        \bibitem{A14casson}
        S.~Akbulut and J.~D. McCarthy.
        \newblock {\em Casson's Invariant for Oriented Homology Three-Spheres: An
          Exposition}, volume~36.
        \newblock Princeton University Press, 2014.
        
        \bibitem{BN}
        S.~Boyer and A.~Nicas.
        \newblock Varieties of group representations and {C}asson's invariant for
          rational homology 3-spheres.
        \newblock {\em Trans. Amer. Math. Soc.}, 322(2):507--522, 1990.
        
        \bibitem{BN07}
        G.~Braun and A.~N{\'e}methi.
        \newblock Invariants of {N}ewton non-degenerate surface singularities.
        \newblock {\em Compositio Mathematica}, 143(4):1003--1036, 2007.
        
        \bibitem{BN10}
        G.~Braun and A.~N{\'e}methi.
        \newblock Surgery formula for {S}eiberg--{W}itten invariants of negative
          definite plumbed 3-manifolds.
        \newblock {\em J. reine angew. Math.(CRELLE)}, 638:189--208, 2010.
        
        \bibitem{CDGZ04}
        A.~Campillo, F.~Delgado, and S.~M. Gusein-Zade.
        \newblock {P}oincar{\'e} series of a rational surface singularity.
        \newblock {\em Inventiones mathematicae}, 155(1), 2004.
        
        \bibitem{eisenbud2016three}
        D.~Eisenbud and W.~D. Neumann.
        \newblock {\em Three-Dimensional Link Theory and Invariants of Plane Curve
          Singularities.(AM-110)}.
        \newblock Princeton University Press, 2016.
        
        \bibitem{FM}
        S.~Fukahara and N.~Maruyama.
        \newblock A sum formula for {C}asson's $\lambda$-invariant.
        \newblock {\em Tokyo J. Math.}, 11(2):281--287, 1988.
        
        \bibitem{CDGZ08universal}
        S.~M. Gusein-Zade, F.~Delgado, and A.~Campillo.
        \newblock Universal abelian covers of rational surface singularities and
          multi-index filtrations.
        \newblock {\em Functional Analysis and Its Applications}, 42(2):83--88, 2008.
        
        \bibitem{laszlo2022canonical}
        T.~L{\'a}szl{\'o}.
        \newblock On a canonical polynomial for links of elliptic singularities.
        \newblock {\em arXiv preprint arXiv:2201.10837}, 2022.
        
        \bibitem{LNN19}
        T.~L{\'a}szl{\'o} and A.~Nagy, J.and~N{\'e}methi.
        \newblock Combinatorial duality for {P}oincar{\'e} series, polytopes and
          invariants of plumbed 3-manifolds.
        \newblock {\em Selecta Mathematica}, 25(2):21, 2019.
        
        \bibitem{LNN20}
        T.~L{\'a}szl{\'o}, J.~Nagy, and A.~N{\'e}methi.
        \newblock Surgery formulae for the {S}eiberg--{W}itten invariant of plumbed
          3-manifolds.
        \newblock {\em Revista Matem{\'a}tica Complutense}, 33(1):197--230, 2020.
        
        \bibitem{LN14Erhart}
        T.~L{\'a}szl{\'o} and A.~N{\'e}methi.
        \newblock {E}hrhart theory of polytopes and {S}eiberg--{W}itten invariants of
          plumbed 3--manifolds.
        \newblock {\em Geometry \& Topology}, 18(2):717--778, 2014.
        
        \bibitem{LSz17}
        T.~L{\'a}szl{\'o} and Zs. Szil{\'a}gyi.
        \newblock Non-normal affine monoids, modules and {P}oincar{\'e} series of
          plumbed 3-manifolds.
        \newblock {\em Acta Mathematica Hungarica}, 152(2):421--452, 2017.
        
        \bibitem{LSz18}
        T.~L{\'a}szl{\'o} and Zs. Szil{\'a}gyi.
        \newblock {N}\'emethi's division algortihm for zeta-functions of plumbed
          3-manofolds.
        \newblock {\em Bull. London Math. Soc.}, 50(6):1035--1055, 2018.
        
        \bibitem{Ls96global}
        C.~Lescop.
        \newblock {\em Global surgery formula for the {C}asson--{W}alker invariant}.
        \newblock Princeton University Press, 1996.
        
        \bibitem{N04invariants}
        A.~N{\'e}methi.
        \newblock Invariants of normal surface singularities.
        \newblock {\em Real and complex singularities}, pages 161--208, 2004.
        
        \bibitem{N07poincare}
        A.~N{\'e}methi.
        \newblock {P}oincar\'e series associated with surface singularities,
          {S}ingularities {I}: {A}lgebraic and {A}nalytic {A}spects.
        \newblock {\em AMS Contemporary Mathematics}, 474:271--299, 2007.
        
        \bibitem{nemethi2022normal}
        A.~N{\'e}methi.
        \newblock {\em Normal surface singularities}, volume~74 of {\em Ergebnisse der
          Math. und ihrer Grenzgebiete}.
        \newblock Springer, 2022.
        
        \bibitem{N02seiberg}
        A.~N{\'e}methi and L.~I. Nicolaescu.
        \newblock {S}eiberg--{W}itten invariants and surface singularities.
        \newblock {\em Geometry \& Topology}, 6(1):269--328, 2002.
        
        \bibitem{N04seiberg}
        A.~N{\'e}methi and L.~I. Nicolaescu.
        \newblock {S}eiberg--{W}itten invariants and surface singularities. {II}:
          Singularities with good-action.
        \newblock {\em Journal of the London Mathematical Society}, 69(3):593--607,
          2004.
        
        \bibitem{N06seiberg}
        A.~N{\'e}methi and L.~I. Nicolaescu.
        \newblock {S}eiberg--{W}itten invariants and surface singularities: splicings
          and cyclic covers.
        \newblock {\em Selecta Mathematica}, 11(3):399, 2006.
        
        \bibitem{N08seiberg}
        A.~N{\'e}methi and T.~Okuma.
        \newblock The {S}eiberg--{W}itten invariant conjecture for splice-quotients.
        \newblock {\em Journal of the London Mathematical Society}, 78(1):143--154,
          2008.
        
        \bibitem{NO09}
        A.~N{\'e}methi and T.~Okuma.
        \newblock On the {C}asson {I}nvariant {C}onjecture of {N}eumann--{W}ahl.
        \newblock {\em J. Algebraic Geometry}, 18:135--149, 2009.
        
        \bibitem{NeuCalc}
        W.~D. Neumann.
        \newblock A calculus for plumbing applied to the topology of complex surface
          singularities and degenerating complex curves.
        \newblock {\em Trans. Amer. Math. Soc.}, 268:299--344, 1981.
        
        \bibitem{NW}
        W.~D. Neumann and J.~Wahl.
        \newblock Complex surface singularities with integral homology sphere links.
        \newblock {\em Geom. Topol.}, 9(2):757--811, 2005.
        
        \bibitem{O08}
        T.~Okuma.
        \newblock The geometric genus of splice-quotient singularities.
        \newblock {\em Transactions of the American Mathematical Society},
          360(12):6643--6659, 2008.
        
    \end{thebibliography}

\end{document}